\newcommand{\p}{\partial}
\newcommand \lra {\longrightarrow}
\newcommand \absv [1]{\left \lvert #1 \right \rvert }
\newcommand \lp {\left(}
\newcommand \rp {\right)}
\newcommand \la {\langle}
\newcommand \ra {\rangle}
\newcommand{\wt}[1]{\widetilde{#1}}
\newcommand{\norm}[2][]{\left \| #2 \right \|_{#1} }
\newcommand\RR{\mathbb{R}}
\newcommand\NN{\mathbb{N}}
\newcommand\ZZ{\mathbb{Z}}
\newcommand\sphvar{y}
\newcommand\dspharv{\eta}
\newcommand\Id{\operatorname{Id}}
\newcommand\scl {\mathrm{scl}}
\newcommand\Pscl{\Psi_{\scl}}
\newcommand\X{\mathcal{X}}
\newcommand\Z{\mathcal{Z}}
\renewcommand\Vec{\mathcal{V}}
\newcommand\SR{\operatorname{SR}}
\newcommand\trho{{\rho_{\tilde Z}}}
\newcommand\Trace{\operatorname{Trace}}
\newcommand\tV{\tilde V}
\newcommand\Zsm{{Z_{\mathrm{sm}}}}
\newcommand\Fsm{{H_{\mathrm{sm}}}}
\newcommand\FFF{H}
\newcommand\ZZZ{\tilde Z}
\newcommand\Cinfe{C^{\infty, \epsilon}}
\newcommand\Cinfep{C^{\infty, \underline{\epsilon}}}
\newcommand\blambda{{\boldsymbol\lambda}}
\newtheorem{theorem}{Theorem}
\newtheorem{lemma}[theorem]{Lemma}
\newtheorem{proposition}[theorem]{Proposition}
\newtheorem{corollary}[theorem]{Corollary}
\newtheorem{definition}[theorem]{Definition}
\theoremstyle{remark}
\newtheorem{remark}[theorem]{Remark}
\numberwithin{equation}{section}
\numberwithin{theorem}{section}
\DeclareMathOperator \Op {Op}
\DeclareMathOperator \WF {WF}
\DeclareMathOperator \spec {spec}
\DeclareMathOperator \supp {supp}
\DeclareMathOperator \Tr {Tr}
\DeclareMathOperator \tr {Tr}
\DeclareMathOperator \Vol {Vol}
\DeclareMathOperator \sgn {sgn}
\title[Distribution of phase shifts for potentials
  with polynomial decay]{The distribution of phase shifts for semiclassical potentials
  with polynomial decay}
\author{Jesse Gell-Redman}
\address{Department of Mathematics, Johns Hopkins University}
\email{jgell@math.jhu.edu}
\author{Andrew Hassell}
\address{Mathematical Sciences Institute, Australian National University}
\email{Andrew.Hassell@anu.edu.au}
\begin{document}

\maketitle

\begin{abstract}
 This is the third paper in a series \cite{DGHH2013,
   Gell-Redman-Hassell-Zelditch} analyzing the asymptotic distribution
 of the phase shifts in the semiclassical limit.
We analyze the distribution of phase shifts, or equivalently, 
eigenvalues of the scattering matrix, $S_h(E)$, for semiclassical
Schr\"odinger operators on $\mathbb{R}^d$ which are perturbations of
the free Hamiltonian by a potential $V$ with polynomial decay.
Our assumption is that $V(x) \sim |x|^{-\alpha} 
v(\hat x)$ as $x \to \infty$, for some $\alpha > d$, with corresponding derivative estimates.  
In the semiclassical limit $h \to 0$, we show that the atomic measure on the 
unit circle defined by these eigenvalues, 
after suitable scaling in $h$, tends to a measure $\mu$ on $\mathbb{S}^1$.
Moreover, $\mu$ is the pushforward  from
$\mathbb{R}$ to $\mathbb{R} / 2 \pi
\mathbb{Z} = \mathbb{S}^1$ of a homogeneous distribution $\nu$ of
order $\beta$ depending on the dimension $d$ and the rate of decay $\alpha$ of the
potential function.  As a corollary we obtain an asymptotic formula
for the accumulation of phase shifts in a sector of $\mathbb{S}^1$.

The proof relies on an extension of results in \cite{HW2008} on the
classical Hamiltonian dynamics and semiclassical Poisson operator
to the class of potentials under consideration here.
\end{abstract}

\section{Introduction}
Consider a semiclassical Schr\"odinger operator 
$$
H_h := h^2 \Delta + V - E
$$
on $\mathbb{R}^d$, where $\Delta = - \sum_{i = 1}^d \p_{x_i}^2$ is the
positive Laplacian, $E$ is a real constant and $V \colon \mathbb{R}^d \lra \mathbb{R}$ is a
smooth real-valued function satisfying 
\begin{equation}
  \label{eq:potential-asymptotics}
  V(x) = \frac{v_0(\hat x)}{|x|^\alpha} + W(x), \quad  x \in \RR^d, \quad \hat x = \frac{x}{|x|} , \quad \mbox{ where } 
  \big| W(x) \big| = O(|x|^{-(\alpha + \epsilon)}) , 
\end{equation}
for $|x|$ large, some $\alpha > 1$ and some $\epsilon > 0$. (For our
main theorem, we will require $\alpha > d$, and $W$ to satisfy `symbolic' derivative estimates as in   \eqref{W-deriv-est}, but
for some of our intermediate results $\alpha > 1$ and \eqref{eq:potential-asymptotics} will be sufficient.)
 
 Under these assumptions, the (relative) scattering matrix $S_h$
exists and is a unitary operator on $L^2(\mathbb{S}^{d - 1})$; $S_h$ is given on $\phi \in C^\infty(\mathbb{S}^{d
  -1 })$ as $S_h \phi = e^{i\pi(d -1)/2} \psi$ where $\psi$ is the
unique function such that there is a
solution $u_\phi$ to $H_h u_\phi = 0$ satisfying
\begin{equation}
u_\phi = r^{-(d - 1)/2} (e^{- i \sqrt{E} r/h} \phi(\omega) + e^{i
  \sqrt{E} r/h}\psi(-\omega) ) + o(r^{(d - 1)/2}), \quad r = |x|. \label{eq:generalized-eigenfunction}
\end{equation}
The difference $S_h - \Id$ is a compact operator on $L^2(\mathbb{S}^{d - 1})$, and thus
the spectrum of $S_h$ lies on the unit circle, is discrete, and accumulates
only at $1$.  Setting $\gamma = (d - 1)/(\alpha - 1)$, we define the
(infinite) atomic measure $\mu_h$ on the circle
which acts on $f \in C^0_{comp}(\mathbb{S}^1 \setminus 1)$ by
\begin{equation}
  \label{eq:measure-definition}
  \la \mu_h, f \ra = h^{\gamma \alpha} \sum_{e^{2i \beta_{n, h} } \in \spec(S_h)}
  f(e^{2i \beta_{n, h}})
\end{equation}
for some enumeration $e^{2i \beta_{n, h}}$ of the eigenvalues of $S_h$, repeated according to their multiplicity. 
  The $\beta_{n, h} \in [0, \pi)$ are called  the `phase shifts' of $H_h$.

\medskip

\noindent \textbf{Main Theorem:}
\textit{Let $I$ be any open interval of $\mathbb{S}^1$ containing $1$. 
Assume that $V$ satisfies \eqref{eq:potential-asymptotics} for some
$\alpha > d$, and that $W$ in \eqref{eq:potential-asymptotics}
satisfies the additional derivative estimates 
\begin{equation}
\big| \p_x^k W(x) \big| = O(|x|^{-(\alpha  + |k| + \epsilon)}) \ \forall \ k \in \NN^d , \quad |x| \to \infty. 
\label{W-deriv-est}\end{equation}
Then the measures $\mu_h$ converge in the weak-$*$ topology on $\mathbb{S}^1 \setminus I$
to a measure $\mu$. Moreover, $\mu$ is the pushforward via the map $\mathbb{R} \lra
\mathbb{R}/ 2\pi \mathbb{Z} \simeq \mathbb{S}^1$ of a homogeneous
measure 
$$
\nu = \left\{
  \begin{array}{lr}
    (1/(2\pi))^{d-1}a_1 \theta^{- \gamma + 1}  & \mbox{ for } \theta > 0\\ 
    (1/(2\pi))^{d-1}a_2 |\theta|^{- \gamma + 1} & \mbox{ for } \theta < 0.
  \end{array}
\right.
$$
Concretely, for every $f \in C^0_{comp}(\mathbb{S}^1 \setminus I)$, we have 
$$
\lim_{h \to 0}  \la \mu_h, f \ra = \int_{\mathbb{S}^1} f(e^{i\theta}) \, d\mu. 
$$
The constants $a_1, a_2$ are given in
\eqref{eq:constants-3} below.  
 }
\medskip

The Main Theorem is proven at the end of Section
\ref{sec:main-theorem}, modulo the proofs of subsequent technical lemmas.

It follows from the Main Theorem that eigenvalues of $S_h$ accumulate
in sectors of the unit circle at a rate of $h^{- \alpha( d -1)/(\alpha
  - 1)}$.  Indeed, defining a sector on the circle by choosing angles
  $0 < \phi_0 < \phi_1 < 2\pi$, and letting
$$
N(\phi_0, \phi_1) = \# \{ n : \phi_0 \le \beta_{h, n} \le \phi_1
 \mbox{ mod }  2 \pi \},
$$
the main theorem implies the following.
\begin{corollary}\label{thm:counting}
  Assumptions as in the Main Theorem, the number of eigenvalues in a sector satisfies
$$
N(\phi_0, \phi_1) = h^{- \alpha (d - 1)/(\alpha - 1)}
(\int_{\phi_0}^{\phi_1} d\mu)(1 + o(1)).
$$
\end{corollary}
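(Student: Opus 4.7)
The plan is to deduce Corollary \ref{thm:counting} from the Main Theorem via a standard Portmanteau-style argument, treating the counting function as the integral of an indicator against the measure $\mu_h$. Writing $\chi$ for the characteristic function of the arc of $\mathbb{S}^1$ corresponding to the sector $[\phi_0, \phi_1]$, the definition \eqref{eq:measure-definition} of $\mu_h$ immediately gives
$$
h^{\alpha(d-1)/(\alpha-1)} N(\phi_0, \phi_1) = \langle \mu_h, \chi \rangle,
$$
so it suffices to show that $\langle \mu_h, \chi \rangle \to \int_{\phi_0}^{\phi_1} d\mu$. The Main Theorem only provides weak-$*$ convergence against $C^0_{comp}(\mathbb{S}^1 \setminus I)$ test functions, so the work is in upgrading this convergence to the discontinuous $\chi$.

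First I would check that the two endpoints $e^{i\phi_0}$ and $e^{i\phi_1}$ are $\mu$-null. From the explicit formula, $\nu$ has density proportional to $|\theta|^{-\gamma+1}$, which is locally integrable near $\theta = 0$ provided $-\gamma + 1 > -1$, equivalently $\alpha > (d+1)/2$; this is strictly weaker than the standing hypothesis $\alpha > d$. Away from $\theta = 0$, $\nu$ is smooth, and hence its $2\pi\mathbb{Z}$-periodicization $\mu$ has smooth density on $\mathbb{S}^1 \setminus \{1\}$. In particular every point on $\mathbb{S}^1 \setminus \{1\}$, including $e^{i\phi_0}$ and $e^{i\phi_1}$, has $\mu$-measure zero.

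Next, fix $\delta > 0$ and shrink the interval $I$ around $1$ so that it is disjoint from $[\phi_0, \phi_1]$. Because the endpoints are $\mu$-null, a standard approximation argument produces $f_-, f_+ \in C^0_{comp}(\mathbb{S}^1 \setminus I)$ with $f_- \le \chi \le f_+$ and $\int (f_+ - f_-)\, d\mu < \delta$. Applying the Main Theorem to $f_\pm$ sandwiches $\langle \mu_h, \chi \rangle$:
$$
\int f_-\, d\mu = \lim_{h \to 0} \langle \mu_h, f_- \rangle \le \liminf_{h \to 0} \langle \mu_h, \chi \rangle \le \limsup_{h \to 0} \langle \mu_h, \chi \rangle \le \lim_{h \to 0} \langle \mu_h, f_+ \rangle = \int f_+\, d\mu.
$$
The outer quantities differ from $\int_{\phi_0}^{\phi_1} d\mu$ by at most $\delta$, so sending $\delta \to 0$ yields $\langle \mu_h, \chi \rangle \to \int_{\phi_0}^{\phi_1} d\mu$. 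Since $\int_{\phi_0}^{\phi_1} d\mu > 0$ for any nondegenerate arc away from $1$, rewriting this as $N(\phi_0,\phi_1) = h^{-\alpha(d-1)/(\alpha-1)}\bigl(\int_{\phi_0}^{\phi_1} d\mu\bigr)(1 + o(1))$ gives the Corollary.

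There is no serious obstacle here: the argument is a routine Portmanteau-type deduction from the weak-$*$ convergence supplied by the Main Theorem, with the only mildly nontrivial ingredient being the absolute continuity of $\mu$ on $\mathbb{S}^1 \setminus \{1\}$, which is itself immediate from the explicit description of $\nu$ together with the hypothesis $\alpha > d$.
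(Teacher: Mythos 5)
Your argument is correct and is essentially the paper's own proof: both sandwich the indicator of the sector between continuous functions supported away from $1$, apply the Main Theorem to the bounding functions, and use the fact that $\mu$ has no atoms at the endpoints (implicit in the paper's choice of $f, g$ with $\mu$-integrals within $\epsilon$ of the indicator's) to conclude. The only difference is cosmetic: the paper phrases the sandwich via traces $\Tr f(S_h) \le \Tr(1_{[\phi_0,\phi_1]}(S_h)) \le \Tr g(S_h)$ rather than directly via $\langle \mu_h, \cdot \rangle$, and your aside about integrability of the density near $\theta = 0$ is unnecessary (only smoothness of the density away from $1$ is needed for the endpoints to be $\mu$-null).
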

This result can be taken as an analogue of the Weyl asymptotic
formula, reviewed below in Section \ref{sec:eigenvalue-accumulation}.
It is also proven at the end of Section \ref{sec:main-theorem}.

The Main Theorem is proven, following \cite{Zelditch-Kuznecov}, via
analysis of the traces of the operators $S^k_h - \Id$.  The fact that
these operators are trace class is shown for example in
\cite{Yafaev:Scattering-Theory:Some-Old}; in Section
\ref{sec:traces-and-compositions} below, we prove a precise
asymptotic formula for the trace which gives its leading order
behavior in $h$ as $h \to 0$.  The trace of $S^k_h - \Id$ is equal to
$h^{-\alpha \gamma} \la \mu_h, p_k(z) \ra$ with $p_k(z) = z^k - 1$,
where $\mu_h$ is the measure defined in the Main Theorem, and our
asymptotic formula for the trace of $S^k_h - \Id$ shows that the Main
Theorem holds for these special values of $f$.  Note that $p_k(z)$
does not, strictly speaking, satisfy the assumptions of the Main
Theorem, as its support contains $1$; in fact, we prove that the
conclusion of the theorem holds on the Banach space of continuous
functions vanishing to first order at $1$ --- see Sections
\ref{sec:main-theorem} and \ref{sec:eigenvalue-distribution}.  To
conclude that the Main Theorem holds we show in Section
\ref{sec:eigenvalue-distribution} that the measures
$\mu_h$ are continuous on this space of continuous functions, which
contain the span of the $p_k$ as a dense subset, and use
an approximation argument to obtain the formula in the Main Theorem.

The trace of $S^k_h - \Id$ is obtained via analysis of the Schwartz
kernel of $S_h$ and its powers.  By \cite{HW2008}, with previous
results for example in \cite{A2005,G1976,RT1989}, the operator $S_h$ is a semiclassical Fourier
Integral Operator whose canonical transformation is the
\textbf{total sojourn relation}, which is a map from incoming rays to
outgoing rays which are asymptotically tangent to the same flow line
of the Hamiltonian system induced by $h^2 \Delta + V - E$.  The
precise relationship between the integral kernel of $S_h$ and the
total sojourn relation is discussed in
Section \ref{sec:the-scattering-matrix}, and in particular we see that
the canonical relation of $S_h$ is a perturbation of the identity operator of
order determined by $\alpha$, the rate of vanishing of $V$ at infinity.

We elaborate the latter remark in the special case that $E = 1$ and
$V$ is central ($V(x) = V(|x|)$) on $\mathbb{R}^2$.  The bicharacteristics of the
Hamiltonian $p = |\xi|^2 + V - 1$ are paths $x(t)$ in $\mathbb{R}^2$
satisfying Newton's equation $\ddot x(t) = - 2\nabla V(x(t))$.  The crucial
object related to the dynamical system in this context is the
`scattering angle' $\Sigma$, the angle by which an incoming ray is
deflected by the potential \cite{RSIII}.  Indeed, in this case the canonical
relation of $S_h$
is the graph of the map
of $T^*\mathbb{S}^1 \lra T^* \mathbb{S}^1$ taking a point $(\omega,
\eta)$ to  $(\omega + \Sigma(\eta), \eta)$, where
$(\omega, \eta)$ corresponds to a straight
ray $x_0(t) = \omega t + \eta$ and $\eta \perp \omega$.  Here, the scattering
angle is given explicitly
by the formula \cite[Eqn.\ 2.6]{DGHH2013}
$$
\Sigma(\eta) = \pi - 2 \int_{r_m}^\infty \frac{\eta}{r^2 \sqrt{1 -
    \eta^2 r^2 - V(r)}} \, dr,
$$
where $r_m$ is the
minimum distance to the origin of the bicharacteristic ray $x(t)$ of the
Hamiltonian $p$ that is asymptotic to $x_0$ for time near minus infinity.  When $V
\sim c /r^\alpha$, it is straightforward to compute that $r_m =
\eta( 1+ O(\eta^{- \alpha}))$ and $\Sigma(\eta) = O(\eta^{-
  \alpha})$.  
  
  \

This is the third paper in a series analyzing the asymptotic distribution
of the phase shifts in the semiclassical limit using geometric
microlocal techniques, the first two works of which consider smooth compactly
supported potentials $V$ \cite{DGHH2013,
  Gell-Redman-Hassell-Zelditch}. It is instructive to compare the Main Theorem with the
  main result of 
\cite{Gell-Redman-Hassell-Zelditch}, which is

\begin{theorem}\label{thm:comp} Let $V$ be a real, smooth, compactly supported potential, and $E \in \RR$
a nontrapping energy for the Schr\"odinger operator $H_h = h^2 \Delta + V - E$. Assume that 
the set of periodic points of powers of the reduced scattering map associated to $H_h$ have measure zero in $T^* \mathbb{S}^{d-1}$. Define the sequence of measures $\nu_h$ on $\mathbb{S}^1$ by 
\begin{equation}
  \label{eq:measure-definition-comp}
  \la \nu_h, f \ra = h^{d-1} \frac1{E^{(d-1)/2} \Vol \mathcal{I}} \sum_{e^{2i \beta_{n, h} } \in \spec(S_h)}
  f(e^{2i \beta_{n, h}}),
\end{equation}
where $\mathcal{I}$ is the  set of $(\omega, \eta) \in T^* \mathbb{S}^{d-1}$ associated to bicharacteristics that meet the support of $V$. 
Then, for every $f \in C^0(\mathbb{S}^1)$ supported away from $1$, we have 
 \begin{equation}
    \label{eq:equicompact}
    \lim_{h \to 0} \la \nu_{h}, f \ra = \frac{1}{2\pi} \int_{0}^{2\pi}
    f(e^{i\phi}) d\phi.
  \end{equation}
In particular, the spectrum of $S_h$ is asymptotically equidistributed on the unit circle $\mathbb{S}^1$, away from the point $1$. 
\end{theorem}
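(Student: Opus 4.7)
The plan is to reduce to the Fourier (polynomial) moments of $\nu_h$, in the spirit of Weyl--Kuznecov equidistribution. Since the uniform measure $\frac{1}{2\pi} d\phi$ satisfies $\int_{\mathbb{S}^1}(z^k-1)\frac{d\phi}{2\pi} = -1$ for every $k \in \ZZ \setminus \{0\}$, and the functions $p_k(z) = z^k - 1$ together with the constant $1$ span a dense subspace of $C(\mathbb{S}^1)$ (Laurent polynomials are dense by Stone--Weierstrass), it will be enough to show (a) $\la \nu_h, p_k\ra \to -1$ for every nonzero integer $k$, and (b) a uniform-in-$h$ continuity estimate for $\la \nu_h, \cdot \ra$ that permits upgrading (a) to convergence against arbitrary $f \in C^0(\mathbb{S}^1)$ supported away from $1$. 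Since the eigenvalues of $S_h^k$ are the $k$-th powers $e^{2ik\beta_{n,h}}$, statement (a) is equivalent to the trace asymptotic
\begin{equation*}
\Tr(S_h^k - \Id) = -(2\pi h)^{-(d-1)} E^{(d-1)/2} \Vol \mathcal{I} + o(h^{-(d-1)}), \quad k \ne 0.
\end{equation*}

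To evaluate this trace I would invoke the result of \cite{HW2008} that $S_h$ is a semiclassical Fourier integral operator on $L^2(\mathbb{S}^{d-1})$ whose canonical relation is the graph of the (reduced) scattering map $\beta \colon T^*\mathbb{S}^{d-1} \to T^*\mathbb{S}^{d-1}$ at energy $E$. Compact support of $V$ forces any bicharacteristic disjoint from $\supp V$ to be a straight line, so $\beta$ is the identity on the complement of $\mathcal{I}$. Choose a microlocal partition $\Id = A + B$ with $A \in \Pscl^0$ a semiclassical pseudodifferential cutoff concentrated on a small neighborhood of $\mathcal{I}$. On the microsupport of $B$ the canonical relation is the diagonal, so $S_h^k B = B$ modulo $O(h^\infty)$, and hence
\begin{equation*}
\Tr(S_h^k - \Id) = \Tr(S_h^k A) - \Tr(A) + O(h^\infty),
\end{equation*}
with each term trace class. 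The semiclassical Weyl law, applied to a monotone approximation of the characteristic function of $\mathcal{I}$, gives $\Tr(A) = (2\pi h)^{-(d-1)} E^{(d-1)/2} \Vol \mathcal{I} + o(h^{-(d-1)})$, the factor $E^{(d-1)/2}$ reflecting the symplectic volume on the energy surface. For $\Tr(S_h^k A)$ I would apply the semiclassical FIO trace formula: writing the kernel as an oscillatory integral, the critical set of the trace phase is precisely the fixed point set of $\beta^k$ intersected with the microsupport of $A$. The hypothesis that periodic points of powers of $\beta$ have Liouville measure zero permits approximating this set by neighborhoods of arbitrarily small symplectic volume and running stationary phase on the complement, yielding $\Tr(S_h^k A) = o(h^{-(d-1)})$.

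For (b), a continuous $f$ supported away from $1$ vanishes at $1$, and by Stone--Weierstrass it is uniformly approximated by trigonometric polynomials $P_N(z) = \sum_{|k|\le N} c_{k,N} z^k$; the modification $P_N - P_N(1) = \sum_{k \ne 0} c_{k,N} p_k$ is then also uniformly close to $f$, since $|P_N(1)| = |P_N(1) - f(1)|$ is small. To exchange the $h \to 0$ and $N \to \infty$ limits one needs a $h$-uniform estimate $|\la \nu_h, g\ra| \le C \|g\|_\star$ on a Banach space containing the $p_k$'s densely (for example the continuous functions vanishing at $1$, following the strategy outlined in Section~\ref{sec:eigenvalue-distribution}); such an estimate reduces to a trace-norm bound on $(S_h - \Id) A$ uniform in $h$, which in turn follows from the FIO structure of $S_h$ on $\mathcal{I}$ together with compactness of that region. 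The hard part will be the stationary-phase estimate $\Tr(S_h^k A) = o(h^{-(d-1)})$ under only a measure-zero (rather than non-degeneracy) hypothesis on the periodic set of $\beta^k$: pointwise non-degenerate stationary phase is unavailable, and one must instead combine a covering of the periodic set by symplectically small sets with careful uniform oscillatory integral bounds on the complement.
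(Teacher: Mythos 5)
This theorem is quoted from the earlier paper \cite{Gell-Redman-Hassell-Zelditch}; the present paper does not supply its own proof but cites it for comparison with the Main Theorem. Your proposal reconstructs the broad outline of the GRHZ argument, and the strategy matches what the introduction of this paper describes: compute $\Tr(S_h^k - \Id)$ as the (rescaled) Fourier moments of $\nu_h$; decompose $\Id = A + B$ with $A$ a compactly-microsupported cutoff to a neighbourhood of $\mathcal{I}$ so that $S_h^k B = B + O(h^\infty)$; evaluate $\Tr(A)$ by the semiclassical Weyl law and kill $\Tr(S_h^k A)$ by stationary phase plus the measure-zero hypothesis on periodic points; and then upgrade from the moments $p_k = z^k - 1$ to general $f$ supported away from $1$ by a density argument, exactly the mechanism of Section~\ref{sec:main-theorem}.

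Two cautions. First, your claimed asymptotic $\Tr(S_h^k - \Id) = -(2\pi h)^{-(d-1)} E^{(d-1)/2}\Vol\mathcal{I} + o(h^{-(d-1)})$ carries an extra factor $(2\pi)^{-(d-1)}$: from \eqref{eq:measure-definition-comp}, convergence $\la\nu_h,p_k\ra\to -1$ requires $\Tr(S_h^k - \Id) = -h^{-(d-1)} E^{(d-1)/2}\Vol\mathcal{I} + o(h^{-(d-1)})$, so either the convention for $\Vol\mathcal{I}$ already absorbs the Liouville factor or a power of $2\pi$ is off; you should check this against the precise normalization of $\mathcal{I}$ in the cited paper. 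Second, the Banach space for the $h$-uniform continuity estimate should be $C^0_w(\mathbb{S}^1)$, that is, functions of the form $(z-1)g(z)$ with $g$ continuous, and \emph{not} merely continuous functions vanishing at $1$: the measures $\nu_h$ have infinite total mass and their pairing with $f$ is controlled by the trace norm of $S_h - \Id$ only when $|f(z)| \le C|z-1|$ pointwise, i.e.\ when $f$ vanishes to first order. Your parenthetical ``continuous functions vanishing at $1$'' would not give a uniform bound.

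Finally, the step you rightly flag as the crux --- showing $\Tr(S_h^k A) = o(h^{-(d-1)})$ from only a measure-zero hypothesis on $\Fix(\beta^k)\cap\mathcal{I}$, rather than a clean nondegeneracy condition --- is indeed the main analytic content, and the proposed covering-plus-nonstationary-phase argument is the standard route for such a statement. As written, though, you have identified this step but not carried it out, so what you have is a correct high-level plan rather than a complete proof.
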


The main differences between the Main Theorem and Theorem~\ref{thm:comp} are
\begin{itemize}
\item The rate of accumulation is different. There are about $h^{-(d-1)}$ eigenvalues in a sector in the case of
compact support
\cite{Gell-Redman-Hassell-Zelditch} as opposed to $h^{-\alpha (d -
  1)/(1 - \alpha)}$ for potentials decaying like $|x|^{-\alpha}$.
  
\item For compactly supported semiclassical
potentials,  the phase shifts \textit{equidistribute} around the unit
circle as $h \to 0$, whereas for polynomial decay \textit{they do
  not}; instead, we get the
homogeneous distributions in the Main Theorem.  

\item The equidistribution result for compactly supported
potentials relies on two dynamical assumptions on the bicharacteristic
flow (the first being non-trapping).  
For polynomial decay, neither of these assumptions are required.
\end{itemize}

These differences arise from the fact that $S_h$ for a compactly supported
potential is semiclassically equal to the identity operator outside a
compact set in phase space, so the difference $S_h^k - \Id$ is the
difference of two semiclassical FIO's of order $0$ with compact
microsupport.  As such their traces grow like $h^{-(d - 1)}$ as $h \to
0$ (see e.g.\ \cite[Appendix]{Gell-Redman-Hassell-Zelditch}), and the
volume of phase space on which $S_h^k - \Id$ is microlocally nontrivial
enters into the leading asymptotics. 

By contrast, in the present setting, compact
subsets of phase space are irrelevant since their contribution to the measure $\mu_h$ is
order $O(h^{-(d-1)} \times h^{\alpha (d-1)/(\alpha-1)} ) = O(h^{(d-1)/(\alpha -1)})$ which is a positive power of $h$, so only the asymptotic behaviour of the dynamics is important. This explains why the nontrapping assumption is not relevant in the present setting, as trapped rays only occur in a compact region of phase space. 

The quicker rate of accumulation of eigenvalues, $\sim h^{-\alpha (d-1)/(\alpha-1)}$, as $h \to 0$ can be understood heuristically by observing that,
for each level of $h$, there is an `effective radius' $r(h) \sim
h^{-1/\alpha}$ outside of which the decaying potential $V$ is $O(h)$,
and therefore semiclassically negligible.
This radius  tends to infinity as a negative power of $h$,
leading to an effective `interacting' volume of phase space that grows
as $h^{-(d-1)/\alpha}$.  Since a unit of phase space volume contributes
roughly $h^{-(d-1)}$ eigenvalues, there are about
$h^{-(d-1)\alpha/(\alpha - 1)} = h^{-\gamma \alpha}$ phase shifts
deflected away from $1$. This
observation also explains why we fail to have equidistribution, as one
might naively guess based on Theorem~\ref{thm:comp}, in the
polynomially decaying case. Namely, there is no firm distinction
between interacting and noninteracting parts of phase space, so
correspondingly, there is no firm division between eigenvalues that
are `essentially $1$' and `essentially different from $1$'. So, unlike
the compactly supported case where the measure divides into a finite,
equidistributed part and an infinite atom at $1$, in the polynomially
decaying case, the point mass at $1$ is `smeared' into an absolutely
continuous measure with infinite mass near $1$. Thus equidistribution
is not possible as it would only account for a finite amount of mass
away from the point $1$.

One technical challenge of this work is that to treat potentials $V$
for which one has only the derivative estimates of the main theorem
requires an extension of the results in \cite{HW2008}.  Indeed, the
structure of the integral kernel of the scattering matrix, the Poisson
operator, and indeed the outgoing and incoming resolvents are treated
in \cite{HW2008} in the case that
$$
V = r^{-2}\sum_{j = 0}^\infty a_j(\omega) r^{-j},
$$
for uniformly bounded $a_j \in C^\infty(\mathbb{S}^{d-1})$. In other
words, in the case that $V$ is a \textit{smooth} function of $\rho =
1/r$ and $\omega$ at $\rho = 0$.  The potentials $V$ under
consideration here are merely \textit{conormal} (see Appendix
\ref{sec:soujourn}) and thus some care is required to show that the
scattering matrix has the FIO structure one would predict by analogy
with the smooth case.  This extension is done in detail in Appendix
\ref{sec:soujourn}.

\

The introduction to \cite{Gell-Redman-Hassell-Zelditch} contains a
literature review on the topic, to which we refer the reader.  In
particular, \cite{SY1985} contains an asymptotic formula for the phase
shifts for \textit{central} potentials of polynomial decay, in this
case with two asymptotic parameters.  Namely, they analyze $\Delta +
gV + k^2$, for large $g$ or $k$, in particular obtaining, when $g =
k^2 = 1/h^2$ a formula for the phase shifts which implies our Main Theorem in the special case of central potentials.  Other
related work includes \cite{BY1982, BY1984, BY1993, Sm1992}.


\section{The scattering matrix, $S_h$}\label{sec:the-scattering-matrix}

We now describe the FIO structure of the scattering matrix for
potentials with polynomial decay.  Note that by setting $\wt{V} = V/E$
and $\wt{h} = h/\sqrt{E}$ we may reduce the general $E$ case to the $E
= 1$ case, and so we assume when convenient that
$$
E = 1.
$$

\subsection{The canonical relation of the scattering matrix}

We now describe the canonical relation of the scattering
matrix.  In fact, following \cite{Gell-Redman-Hassell-Zelditch}, we
define a Legendre submanifold of $T^* \mathbb{S}^{d-1} \times
T^*\mathbb{S}^{d-1} \times \mathbb{R}$ related to the scattering
matrix in a way we describe in detail in Section \ref{sec:Schwartz-kernel}.

Given $\omega' \in \mathbb{S}^{d-1}$ and $\eta' \in \RR^d$ orthogonal to $\omega'$, 
there is a unique
bicharacteristic ray $\gamma_{\omega', \eta'}(t) = (x_{\omega', \eta'}(t) ,
\xi_{\omega', \eta'}(t))$ of the semiclassical
Hamiltonian flow associated with $H_h = h^2 \Delta + V - 1$ satisfying
\begin{equation}\label{eq:incoming-data}
x_{\omega', \eta'}(t) = \omega' t + \eta' + o(1) \mbox{ for } t << 0.
\end{equation}
and $|\xi_{\omega', \eta'}|^2 + V(x_{\omega', \eta'}) \equiv 1$.  (Recall
that a (semiclassical) bicharacteristic ray $\gamma = (x, \xi)$ is a solution to Hamilton's equations
$\dot{x}(t) = 2  \xi$, $\dot{\xi}(t) = - \nabla V(x).$)  By our
non-trapping assumption, as $t \to + \infty$, this ray escapes to infinity, 
taking the form 
\begin{equation}\label{eq:outgoing-data}
x_{\omega', \eta'}(t) = \omega (t - \tau) + \eta + o(1) \mbox{ for } t >> 0.
\end{equation}
Here $\tau = \tau(\omega', \eta')$ is the `time delay'. 
As explained in \cite{Gell-Redman-Hassell-Zelditch}, the pairs $(\omega', \eta')$ 
and $(\omega, \eta)$ can be interpreted as  points in $T^* \mathbb{S}^{d-1}$. 
The map $(\omega', \eta') \mapsto (\omega, \eta)$ is known as the reduced scattering map
$\mathcal{S} = \mathcal{S}_{E =1}$ at energy $E = 1$. (For the
arbitrary energy reduced scattering map see
\cite{Gell-Redman-Hassell-Zelditch} of Appendix \ref{sec:soujourn}.  The ray corresponding to
$(\omega', \eta')$ produces an additional piece
of data, a function $\varphi \colon T^*\mathbb{S}^{d -1} \lra
\mathbb{R}$ defined by\footnote{The function $\varphi$ is closely related to, but not the same as, the time delay function $\tau$.  See \cite[Section 2]{Gell-Redman-Hassell-Zelditch} for further discussion.}
\begin{equation}
  \label{eq:phi-definition}
  \varphi(\omega', \eta') = \int^\infty_{- \infty}  x_{\omega', \eta'}(s) \cdot
  \nabla V(x_{\omega', \eta'}(s))  ds.
\end{equation}
The reduced scattering
map, together with the
map $\varphi$, determine a Legendre submanifold of $T^* \mathbb{S}^{d-1} \times T^* \mathbb{S}^{d-1} \times \RR_\phi$, endowed with the contact form $\eta' \cdot d\omega' + \eta \cdot d\omega - d\varphi$, called the `total sojourn relation' in \cite{HW2008}:
\begin{equation}
  \label{eq:total-sojourn-relation}
  L = \big\{  (\omega', \eta', \omega, -\eta, \phi) \mid (\omega, \eta) = \mathcal{S}(\omega', \eta'), \, \phi = \varphi(\omega', \eta') \big\}.
\end{equation}
That is, the contact form vanishes when restricted to $L$. This
implies immediately that $\mathcal{S}$ is a symplectic map. The
scattering matrix $S_h$ can be described either as a semiclassical
Lagrangian distribution with canonical relation given by the graph of
$\mathcal{S}$, or as a Lagrangian-Legendrian distribution associated
to $L$, as we discuss further in Section~\ref{subsec:oscint}.

We now find it convenient to switch to using local coordinates $y = (y_1, \dots, y_{d-1})$ on the sphere $\mathbb{S}^{d-1}$. We will use $\eta$ for the dual coordinates on the cotangent bundle. This is a slight abuse of notation (compared to the usage of $\eta$ above) but we find it convenient and should not cause confusion. For example, the contact form written in $(y, \eta)$ coordinates is $\eta \cdot dy = \sum_i \eta_i dy_i$.  

Due to the decay of $V$ at spatial infinity, the reduced scattering map $\mathcal{S}$ tends to the identity as $|\eta'|\to \infty$. It will be important in our analysis to understand precisely how this happens. 
In Appendix \ref{sec:soujourn}, we will prove the following
proposition which describes the behavior of the map $\mathcal{S}$ in
the large $\eta$ regime. Before we state the result, we recall
the following standard terminology: we say that a function $\sigma =
\sigma(\sphvar, \dspharv, h)$ is a symbol of order $m$ in $\dspharv$, i.e.\ is in
the space $S^m$, if for each multi-index $k \in \mathbb{N}_0^{d-1}, k'  \in \mathbb{N}_0^{d-1}$,  there is
$C_{k, k'} > 0$ such that
\begin{equation}
  \label{eq:symbol-estimates}
 |  D^{k}_{\sphvar} D^{k'}_\dspharv \sigma | \le C_{k,k'} \la \dspharv \ra^{s - |k'|},
\end{equation}
where $|k| = k_1 + \cdots + k_{d-1}$ and $\la \dspharv \ra =
(|\dspharv|^2 + 1)^{1/2}$ and $C_{k, k'}$ is independent of $h$.

\begin{proposition}\label{thm:sojourn-near-infty}
  Suppose that $\alpha > 1$, and that $V \in \mathbb{C}^\infty$ can be expressed in the form 
  $$
  V = \frac{v_0(\hat x)}{|x|^\alpha} + W,
  $$
where $W$ satisfies \eqref{W-deriv-est}. 
Then the map $(y', \eta') \mapsto (y, \eta, \phi) = (\mathcal{S}(y', \eta'), \varphi(y', \eta'))$ satisfies 
\begin{equation}
  \begin{split}
    \sphvar_i &= \sphvar'_i + a_i(\sphvar', \hat{\dspharv'})
    |\dspharv'|^{- \alpha} + e_i \\
    \dspharv_i &= - \dspharv'_i + b_i(\sphvar', \hat{\dspharv'})
    |\dspharv'|^{1 - \alpha} + \tilde e_i \\
    \varphi &= c(\sphvar', \hat{\dspharv'}) |\dspharv'|^{1 - \alpha} + e',
  \end{split}
\label{conreg}\end{equation}
for each $i = 1, \dots, d-1$ and $|\dspharv'|$ large, where $a_i, b_i$ and $c$ are smooth,  $e_i \in S^{- \alpha - \epsilon}$, and $ \tilde e_i, e' \in S^{1 - \alpha - \epsilon}$ (see \eqref{eq:symbol-estimates}.)
\end{proposition}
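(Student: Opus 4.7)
The plan is to construct the bicharacteristic $\gamma_{\omega',\eta'}$ as a perturbation of the free straight-line trajectory $x_0(t) = \omega' t + \eta'$ when $R := |\eta'|$ is large. Since $\omega' \perp \eta'$, we have $|x_0(t)|^2 = t^2 + R^2 \ge R^2$, so $V(x_0(t)) = O(\la t^2 + R^2 \ra^{-\alpha/2})$ uniformly along the trajectory, and this smallness is what drives the perturbative analysis.

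First I would recast Hamilton's equations in integral form, with incoming data $(x(-\infty), \xi(-\infty))$ prescribed by $(\omega', \eta')$ via \eqref{eq:incoming-data}, and solve them by a Picard iteration in a weighted space such as
$$
\norm[R]{(f,g)} = \sup_{t \in \RR} \la t^2 + R^2 \ra^{(\alpha - 1)/2} |f(t)| + \sup_{t \in \RR} \la t^2 + R^2 \ra^{\alpha/2} |g(t)|
$$
applied to the deviations $(x - x_0, \xi - \xi(-\infty))$. Uniform contraction for $R$ large produces the unique ray, and the outgoing data $(\omega, \eta)$ of \eqref{eq:outgoing-data} is read off by letting $t \to +\infty$.

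To extract the leading asymptotics I would use the first iterate (Born approximation). Splitting $V = v_0(\hat x)|x|^{-\alpha} + W$ and rescaling $s = R\sigma$, the total impulse $\int_{-\infty}^\infty \nabla V(x_0(s)) \, ds$ evaluates to a principal part of size $R^{-\alpha}$ times a smooth function of $(\omega', \hat{\eta'})$, plus an error $O(R^{-\alpha - \epsilon})$ from $W$ (using \eqref{W-deriv-est} with $k=0$) and $O(R^{-2\alpha})$ from subsequent iterates; this produces the angular shift with coefficient $a_i$. The asymptotic transverse position shift is obtained from $\int_{-\infty}^\infty s\, \nabla V(x_0(s)) \, ds$, which picks up an extra factor of $R$ under the same rescaling and so scales as $R^{1-\alpha}$, producing $b_i$. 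The resulting explicit expressions in $(\omega', \hat{\eta'})$ translate to the form \eqref{conreg} via the smooth change to local coordinates $(y, \eta)$ on $T^*\mathbb{S}^{d-1}$, which preserves symbol classes. For $\varphi$ I would substitute the same expansion into \eqref{eq:phi-definition}; the principal contribution from $v_0|x|^{-\alpha}$ is of order $R^{1-\alpha}$ (using the Euler identity $x \cdot \nabla |x|^{-\alpha} = -\alpha |x|^{-\alpha}$ on the leading term), while $W$ contributes the remainder of order $R^{1-\alpha-\epsilon}$.

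The main obstacle is upgrading these pointwise bounds on the errors $e_i, \tilde e_i, e'$ to the full symbol estimates \eqref{eq:symbol-estimates}, i.e.\ showing that each $\eta'$-derivative costs exactly one power of $R$ while $y'$-derivatives cost nothing. I would differentiate the fixed-point equation with respect to $(y', \eta')$, obtaining linear integral equations for the derivatives of the flow, and solve them by the same contraction argument in weighted spaces whose weights shrink by $R^{-1}$ per $\eta'$-derivative. The essential input here is the full derivative estimate \eqref{W-deriv-est} on $W$: it guarantees that $\p^{k'}_{\eta'}(W \circ x_0)$ along the trajectory decays like $\la t^2 + R^2 \ra^{-(\alpha + \epsilon)/2}$ with the appropriate extra loss of $R^{-|k'|}$, which is precisely the symbolic loss required. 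Induction on $|k| + |k'|$ then yields the full symbol class membership claimed in the proposition.
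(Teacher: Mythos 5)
Your strategy---constructing the ray as a perturbation of the free line $x_0(t)=\omega' t+\eta'$, extracting the leading coefficients from the Born approximation by exact homogeneity of $v_0(\hat x)|x|^{-\alpha}$, and obtaining the symbol estimates by differentiating the fixed-point equation---is a genuinely different and more elementary route than the paper's. The paper works on the blown-up compactification $[\X;Z]$ of phase space, rescales the Hamilton vector field, and invokes general conormal-regularity theorems for b-ODEs (Propositions~\ref{prop:conormal-regularity} and~\ref{prop:cr2}) together with a comparison with the zero-potential flow (Lemma~\ref{lem:comparison}); the smooth leading terms there arise from the $s_0^{\gamma_1}t^{\gamma_2}C^\infty$ piece in case (iii), playing the role of your homogeneous Born term, and the $\Cinfep$ remainders correspond to your $S^{-\alpha-\epsilon}$ errors, while $\varphi$ is treated later via \eqref{eq:varphi}--\eqref{varphireg} rather than directly from \eqref{eq:phi-definition} as you do. The heavier machinery is not needed for Proposition~\ref{thm:sojourn-near-infty} in isolation; it is set up because the same regularity is needed uniformly along the whole flowout $\SR$, not just for the endpoint map, which is what feeds the Poisson-operator parametrix and Lemma~\ref{lem:S-structure}. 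Your derivative bookkeeping is the correct mechanism: $\partial_{\eta'}x_0$ is bounded, so each $\eta'$-derivative lands on $V$ and gains one order of decay of the integrand (with \eqref{W-deriv-est} supplying this for $W$), whereas $\partial_{y'}x_0=O(|x_0|)$ is exactly absorbed by the extra decay of higher derivatives of $V$; and your identification of $a_i$, $b_i$, $c$ with $\int\nabla V(x_0)$, $\int s\,\nabla V(x_0)$, $\int x_0\cdot\nabla V(x_0)$, each exactly homogeneous of the stated degree, is right, with second iterates contributing $O(|\eta'|^{-2\alpha})$, resp.\ $O(|\eta'|^{1-2\alpha})$, which sit inside the stated error classes since $\epsilon<\alpha$.

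One step fails as written: the weighted norm taken over all $t\in\RR$. The deviations from the \emph{incoming} free trajectory do not decay as $t\to+\infty$: $\xi(t)-\xi(-\infty)$ converges to the total impulse, generically a nonzero vector of size $|\eta'|^{-\alpha}$, and $x(t)-x_0(t)$ then grows linearly in $t$ with slope of that size. Hence both suprema in your norm are infinite on the very solution you are constructing, and the Picard iteration cannot close in that space. The repair is standard and requires no new idea: run the contraction with those weights only on $(-\infty,0]$ (or up to the time of closest approach), where the weights are correct, then continue the solution and read the outgoing direction and impact parameter off Cook-type integrals such as $\int_0^\infty\nabla V(x(s))\,ds$ and $\int_0^\infty s\,\nabla V(x(s))\,ds$, which converge with the same gains in $|\eta'|$; alternatively, use one-sided weights comparing with the outgoing asymptote for $t\ge 0$. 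With that correction, and the inductive differentiation argument carried out carefully (including the smooth passage to fibre coordinates on $T^*\mathbb{S}^{d-1}$, which as you note preserves the symbol classes), your argument does yield \eqref{conreg}.
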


\subsection{The Schwartz Kernel of $S_h$}\label{sec:Schwartz-kernel}\label{subsec:oscint}
Let us assume for the next few sections that $E$ is a nontrapping energy, and defer the trapping case to Section~\ref{sec:trapping}. 
As shown in \cite{HW2008}, under this assumption, the scattering matrix is a `Legendrian-Lagrangian distribution'.
This means that for each fixed $h > 0$, $S_h$ is a (homogeneous) FIO; in our case\footnote{The results of \cite{HW2008} apply to asymptotically conic nontrapping manifolds. In general, the absolute scattering matrix is a FIO associated to the canonical relation of geodesic flow (at infinity) at time $\pi$. In the case of $\RR^n$, one obtains the `relative scattering matrix', which is what we consider here, by composing the absolute scattering matrix with the antipodal map and multiplying by $i^{(d-1)/2}$. This reduces the canonical relation to the identity in this special case.}, it is a pseudodifferential operator, in fact equal to the identity up to a pseudodifferential operator of order $1-\alpha$. 
As $h \to 0$, the scattering matrix is, in each bounded region of phase space $T^* \mathbb{S}^{d-1}$, a semiclassical 
Lagrangian distribution, but it is not a semiclassical pseudodifferential operator. Instead, its canonical relation is the graph of the reduced scattering transformation, which is only \emph{asymptotically} equal to the identity; Proposition~\ref{thm:sojourn-near-infty} makes precise how this happens. 
What is new about the result in \cite{HW2008} is that it 
gives the precise oscillatory integral form of $S_h(E)$ in the transitional regime; that is, where the semiclassical frequency $\eta$ tends to infinity, uniformly as $h \to 0$. 
However, more regularity was assumed on the potential $V$ in \cite{HW2008}; the assumption made there translates, in our context, to the potential having a Taylor series at infinity of the form $\sum_{j \geq 2} |x|^{-j} v_j(\hat x)$. We explain how the result extends to the potentials considered here in Appendix~\ref{sec:soujourn}.

\begin{remark}
The reason that the term Legendre distribution is used in \cite{HW2008} is because $S_h(E)$ is associated to the Legendre submanifold \eqref{eq:total-sojourn-relation}, which gives an extra piece of information, namely $\varphi$, in addition to the symplectic map $\mathcal{S}$. This eliminates an ambiguity (up to an additive constant) of the class of phase functions locally parametrizing the associated Lagrangian submanifold $\mathrm{graph }(\mathcal{S})$; see the appendix of \cite{Gell-Redman-Hassell-Zelditch} for more discussion on this point. 
\end{remark}

We now express $S_h(E)$ as an oscillatory integral microlocally near infinity.  

\begin{lemma}\label{lem:S-structure} Suppose $E$ is a nontrapping energy. Then the scattering matrix $S_h$ takes the form 
\begin{equation}\label{eq:Sh-decomposition}
S_h = F_1 + F_2,
\end{equation}
where $F_1$ is a zeroth order FIO with compact microsupport, and $F_2$
has an oscillatory integral representation of the form
\begin{equation}\label{eq:Sh-at-fiber-infinity}
F_2(\sphvar,  \sphvar') = (\frac{1}{2\pi h})^{d - 1} \int e^{i((\sphvar -
  \sphvar') \cdot \dspharv + G(\sphvar', \dspharv))/h} (1 + b(\sphvar,
\sphvar', \dspharv, h)) \, d\dspharv,
\end{equation}
where $G(\sphvar', \dspharv)$ is a symbol of order $1-\alpha$ in $\dspharv$, and 
$b$ is a symbol in $\dspharv$ of order $-\alpha$ (see \eqref{eq:symbol-estimates}). Moreover, 
\begin{equation}\begin{aligned}
G(\sphvar', \dspharv) &= g(\sphvar', \hat \dspharv) |\dspharv|^{1-\alpha} + \tilde g, \quad {\tilde g} \in S^{1-\alpha-\epsilon}
\end{aligned}\label{G} \end{equation}
\end{lemma}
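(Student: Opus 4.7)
The strategy is to invoke the results of \cite{HW2008}, as extended in Appendix \ref{sec:soujourn} to handle the merely conormal potentials considered here, which identify $S_h$ as a Legendre distribution associated to the total sojourn relation $L$ of \eqref{eq:total-sojourn-relation}. Fix a smooth cutoff $\chi(\eta)$ equal to $1$ for $|\eta| \le R$ and vanishing for $|\eta| \ge 2R$, with $R$ large enough that the asymptotics of Proposition \ref{thm:sojourn-near-infty} are valid on $\supp(1-\chi)$. Decomposing $S_h$ according to this cutoff on the fiber variable yields a splitting $S_h = F_1 + F_2$ in which $F_1$ has compact microsupport in $\eta$ and is a zeroth order semiclassical FIO by \cite{HW2008}; the remaining task is to write the complementary piece $F_2$, microlocalized to fiber infinity, in the oscillatory integral form \eqref{eq:Sh-at-fiber-infinity}.

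By Proposition \ref{thm:sojourn-near-infty}, on the portion of $L$ with $|\eta'|$ large the base points are close, $y - y' \in S^{-\alpha}$, while $\eta$ differs from $-\eta'$ by an $S^{1-\alpha}$ remainder. It follows that $(y', \eta)$ serve as global coordinates on this portion of $L$, obtained by inverting \eqref{conreg} via contraction. In these coordinates, the generating function $G(y',\eta)$ is defined by demanding that $L$ coincide with the critical set of $\Phi(y,y',\eta) = (y-y')\cdot\eta + G(y',\eta)$; the Legendre condition that $\eta' \cdot dy' + \eta \cdot dy - d\varphi$ vanish on $L$ is precisely the integrability condition that determines $G$ uniquely up to an additive constant, and arranges that the Legendre value $\varphi$ on the critical set matches $G - \eta \cdot \partial_\eta G$. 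Substituting the symbolic expansions \eqref{conreg} into $\partial_\eta G = -(y - y')$, one reads off that $\partial_\eta G \in S^{-\alpha}$ with leading term proportional to $a_i(y',\hat\eta)|\eta|^{-\alpha}$; integration in $\eta$ then produces $G \in S^{1-\alpha}$ with leading term of the form $g(y',\hat\eta)|\eta|^{1-\alpha}$ and remainder in $S^{1-\alpha-\epsilon}$, establishing \eqref{G}.

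The amplitude $1+b$ with $b \in S^{-\alpha}$ arises from the principal symbol of $S_h$ as a Legendre distribution on $L$. Because $\mathcal{S}$ is the identity modulo an $S^{-\alpha}$ perturbation, the half-density factor that forms the leading part of the symbol is $1 + O(|\eta|^{-\alpha})$; iteratively improving the amplitude by the usual parametrix construction absorbs subprincipal corrections of order $S^{-\alpha - k\epsilon}$, $k\ge 1$, into a single asymptotic sum, which by Borel summation can be realized as a symbol $b \in S^{-\alpha}$. The principal obstacle in the argument is not the formal construction of $G$ and $b$ outlined above, but rather verifying that these symbol class bounds hold uniformly in $h$ when $V$ is merely conormal at infinity; this is precisely what Appendix \ref{sec:soujourn} establishes, by extending the smooth asymptotic analysis of \cite{HW2008} to potentials satisfying \eqref{W-deriv-est}.
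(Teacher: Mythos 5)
Your proposal is correct and follows essentially the same route as the paper: both use $(y',\eta)$ as coordinates on $L$ for large $|\eta|$ (justified by Proposition \ref{thm:sojourn-near-infty}), derive the phase function from the Legendrian condition, and defer the claim about the amplitude $a = 1+b$, $b\in S^{-\alpha}$, to Appendix \ref{sec:scattering-matrix-deduction}. The only cosmetic difference is that the paper exhibits $G$ by the closed formula $G(y',\eta) = -\tilde W(y',\eta)\cdot\eta + T(y',\eta)$, where $y = y' + \tilde W(y',\eta)$ and $\varphi = T(y',\eta)$ on $L$, whereas you recover the same $G$ by integrating $\partial_\eta G = -(y-y')$ with the Legendre value fixing the constant; these are equivalent, as the Legendrian identities force $\partial_\eta G = -\tilde W$ for the paper's $G$.
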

\begin{proof}

We first address the question of finding a phase function parametrizing $L$ near infinity, that is, for large $(\eta, \eta')$. 
By Lemma \ref{thm:sojourn-near-infty}, we may use $(\sphvar', \dspharv)$ as
coordinates on the Legendrian $L$ in the large $|\dspharv|$ region, and we write the remaining coordinates on $L$ in terms of these as
$$
\sphvar = W(\sphvar', \dspharv), \quad \dspharv' = N(\sphvar', \dspharv), \quad \varphi = T(\sphvar', \dspharv),
$$
where $(y', \eta', y, \eta, \varphi) \in L$.
The fact that $L$ is Legendrian implies the following identities
amongst these functions $W, N, T$, arising by expressing the vanishing
of $\eta' \cdot dy' + \eta \cdot dy - d\phi$ in these coordinates: 
\begin{equation}
   N_i d\sphvar'_i  = \sum_{j = 1}^{d -1} (- \dspharv_i \frac{\p W_i}{\p \sphvar'_j} d\sphvar'_j + \frac{\p T_i}{\p \sphvar'_j} d\sphvar'_j ) , \quad 
   \sum_{j = 1}^{d -1 } \dspharv_i \frac{\p  W_i}{\p \dspharv_j}
   d\dspharv_j  - \frac{\p T}{\p \dspharv_i} d\dspharv_i = 0.
\end{equation}
Using these identities, one can check that the Legendrian $L$ is parametrized by the function 
\begin{equation}
\Phi(\sphvar', \sphvar, \dspharv) = (\sphvar - W(\sphvar', \dspharv)) \cdot \dspharv + T(\sphvar', \dspharv).
\label{Phi}\end{equation}
Let us write $W(\sphvar', \dspharv) = \sphvar' + \tilde W(\sphvar', \dspharv)$. 
Then we have, comparing \eqref{G} and \eqref{Phi}, 
\begin{equation}\label{eqn:G-specifically}
G(\sphvar', \dspharv) = - \tilde W(\sphvar', \dspharv) \cdot \dspharv + T(\sphvar', \dspharv).
\end{equation}
It thus suffices to note that by Lemma \ref{thm:sojourn-near-infty}, $\tilde W$ is a classical symbol of order $-\alpha$, and $T$ is a classical symbol of order $1-\alpha$. Compare with \cite[Section 7.2]{HW2008}.

That the scattering matrix has a local oscillatory
integral expression using the phase function $\Phi$ with
symbol by $a = 1 + b$ with $b \in S^{-\alpha}$ is
shown in Appendix \ref{sec:scattering-matrix-deduction}.
\end{proof}

\begin{remark}\label{thm:switch-y-yprime}
The choice to parametrize $L$ using a function $G(y', \eta)$ was  an arbitrary one. We can
just as well use $(y, \eta')$ to furnish coordinates on $L$, and then, 
writing $$y' = W'(\sphvar, \dspharv') = y + \tilde W'(\sphvar, \dspharv') , \quad \dspharv =
N'(\sphvar, \dspharv'), \quad \varphi = T'(\sphvar, \dspharv'),$$ 
the function
$$\Phi'(y', y, \eta') = (y' - y) \cdot \eta' - \tilde W' \cdot \eta' + T'(y, \eta')$$ also
parametrizes $L$.  (To be clear, $\varphi = T'(y, \eta')$ means that $T'(y,
\eta')$ is the value of $\varphi$ on $L$ at the point $(y', \eta', y,
\eta, \varphi)$.) Replacing the dummy variable $\eta'$ by $\overline{\eta} = -\eta'$, and 
setting $$G'(y, \overline{\eta}) = \tilde W'(y,-\overline{\eta}) \cdot \overline{\eta} +
T'(y, -\overline{\eta})$$ gives $\Phi' = (y - y') \cdot \overline{\eta}+ G'(y, \overline{\eta})$, and it
follows as above that $G' = g'(\sphvar', \hat {\overline{\eta}}) |\overline{\eta}|^{1-\alpha} + \tilde
g', \quad {\tilde g'} \in S^{1-\alpha-\epsilon}$. Furthermore, we claim that  
\begin{equation}
g' = g.
\label{gg'}\end{equation}
In fact, since $\tilde W(y', \eta) = - \tilde W'(y, -\overline{\eta})$ and $T(y', \eta) = T'(y, -\overline{\eta})$, 
and writing $\overline{\eta} = \eta + N(y', \eta)$, $N \in S^{1-\alpha}$, we find that
$$
\tilde W(y', \eta) = - \tilde W'\big(y' + \tilde W(y', \eta), -\eta + N(y', \eta) \big),
$$
from which it follows that 
$$
\tilde W(y', \eta) = - \tilde W'(y', -\eta) \text{ modulo } S^{1-2\alpha}.
$$
Similarly, $T'(y', -\eta) - T(y', \eta) \in S^{2(1-\alpha)}$. Thus we see that $G - G' \in S^{2(1-\alpha)}$, from 
which \eqref{gg'} follows immediately.

Thus an oscillatory integral of the form in
\eqref{eq:Sh-at-fiber-infinity} with $G$ as in Lemma~\ref{lem:S-structure} can also be written
\begin{equation}\label{eq:Sh-at-fiber-infinity-y-dependent}
 (\frac{1}{2\pi h})^{d - 1} \int e^{i((\sphvar - \sphvar') \cdot \dspharv +
   G'(\sphvar, \dspharv))/h} (1 + b(\sphvar, \sphvar', \dspharv, h)) \,
 d\dspharv,
\end{equation}
where $G'$ has the same properties as $G$, in fact $G - G' \in S^{2(1-\alpha)}$, and $b$ has the same
symbolic properties as $a$ in \eqref{eq:Sh-at-fiber-infinity}.  
\end{remark}

By adjusting the division between $F_1$ and $F_2$ suitably, we may
assume that $G$, as well as  $\langle \dspharv \rangle^{|\gamma|}
D_\dspharv^\gamma G$, are sufficiently small, which we do without further
comment.  Indeed, choosing a function $\chi = \chi(\dspharv)$ with $\chi
\equiv 1$ for $|\dspharv | < R/2 $ and $\chi \equiv 0$ for $|\dspharv| \ge R$
and writing
\begin{gather*}
  \int e^{i(\sphvar - \sphvar') \cdot \dspharv + G(\sphvar',
    \dspharv)/h} (1 + a) \, d\dspharv = \\
  \qquad \int e^{i(\sphvar - \sphvar') \cdot
    \dspharv + G/h}\chi (1 + a) \, d\dspharv + \int e^{i(\sphvar - \sphvar')
    \cdot \dspharv + G/h} (1 - \chi) (1 + a) \, d\dspharv,
\end{gather*}
 and taking $R$ large enough and including the first term on the right in
$F_1$ produces the desired effect.  We drop the $\chi$ from the
notation in $F_2$ since e.g.\ we could take $a
\equiv  - 1$ for $|\eta| \le R$.

\subsection{Powers of the scattering matrix}
To prove the Main Theorem, we will compute the trace of $S_h(E)^k - \Id$ for all integers $k$. Thus it is important to understand how to represent the powers $S_h^k$ as oscillatory integrals.

First assume that $k \geq 1$. The $k$th power of $S_h$ is $(F_1 +
F_2)^k$, and if we expand this product, every term has compact
microsupport except for $F_2^k$. (See Section
\ref{sec:traces-and-compositions} for a further discussion of the
other terms in the expansion and why their contribution to the eigenvalue
asymptotics is lower order.)  For $k = -1$, recall that $S_h^{-1} =
S_h^*$, and thus the integral kernel of $S_h^{-1}$ is given by the
hermitian conjugates $F^*_1 + F^*_2$ in
\eqref{eq:Sh-decomposition}.  Here $F^*_2(\sphvar,  \sphvar') =
\overline{F}_2(y', y)$.  By Remark \ref{thm:switch-y-yprime}, we may
take the phase function of $F_2$ to be of the form
in~\eqref{eq:Sh-at-fiber-infinity-y-dependent}, specifically
with $G'$ as in \eqref{eq:Sh-at-fiber-infinity-y-dependent}, we have  
\begin{equation}\label{eq:Sh-at-fiber-infinity-conjugate}
F^*_2(\sphvar,  \sphvar') = \overline{F}_2(y', y) = (\frac{1}{2\pi h})^{d - 1} \int e^{i((\sphvar -
  \sphvar') \cdot \dspharv  - G(\sphvar', \dspharv))/h} (1 + b(\sphvar,
\sphvar', -\dspharv, h)) \, d\dspharv,
\end{equation}

We will show that $F^k_2$ has the following oscillatory integral structure.

\begin{lemma}\label{thm:composition} Suppose $k \geq 1$. Then the FIO
  $F_2^k$ has an oscillatory integral representation of the form 
\begin{equation}
(\frac{1}{2\pi h})^{d - 1} \int e^{i((\sphvar - \sphvar') \cdot \dspharv + kG(\sphvar',
  \dspharv) + E_k(\sphvar', \dspharv))/h} (1 + b_k(\sphvar, \sphvar', \dspharv, h))
\, d\dspharv,
\end{equation}
where $E_k$ is a symbol of order $2(1-\alpha)$ in $\dspharv$, and $b_k$ is
a symbol of order $1-\alpha$.  Similarly,   $(F^*_2)^k$ has an oscillatory
integral representation of the form 
\begin{equation}
(\frac{1}{2\pi h})^{d - 1} \int e^{i((\sphvar - \sphvar') \cdot \dspharv - kG(\sphvar',
  \dspharv) + E_k(\sphvar', \dspharv))/h} (1 + b_k(\sphvar, \sphvar', \dspharv, h))
\, d\dspharv,
\end{equation}
\end{lemma}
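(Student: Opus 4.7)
The plan is to induct on $k \geq 1$. The base case $k = 1$ is immediate from Lemma~\ref{lem:S-structure} with $E_1 \equiv 0$ and $b_1 = b \in S^{-\alpha} \subset S^{1-\alpha}$. For the inductive step I would write $F_2^k = F_2 \circ F_2^{k-1}$, expressing $F_2$ in its $y$-dependent phase form from Remark~\ref{thm:switch-y-yprime} (with phase $(y - z)\cdot\zeta + G'(y, \zeta)$, intermediate spatial variable $z$) and $F_2^{k-1}$ in the inductive $y'$-dependent form (phase $(z - y')\cdot\eta + (k-1)G(y', \eta) + E_{k-1}(y', \eta)$). The composition kernel then becomes a triple oscillatory integral
\begin{equation*}
F_2^k(y, y') = \left( \tfrac{1}{2\pi h} \right)^{2(d-1)} \iiint e^{i\Psi/h} (1 + b')(1 + b_{k-1}) \, d\zeta \, dz \, d\eta,
\end{equation*}
with $\Psi = (y - z)\cdot\zeta + G'(y, \zeta) + (z - y')\cdot\eta + (k-1)G(y', \eta) + E_{k-1}(y', \eta)$, to which I apply stationary phase in the non-fiber variables $(z, \zeta)$.

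The $(z, \zeta)$-Hessian of $\Psi$ is the standard antidiagonal block $\left(\begin{smallmatrix} 0 & -I \\ -I & 0 \end{smallmatrix}\right)$ perturbed by terms of order $O(\langle \eta \rangle^{-\alpha})$, hence invertible and unimodular to leading order for $|\eta|$ large. The unique critical point satisfies $\zeta_c = \eta + O(S^{1-\alpha})$ and $z_c = y + O(S^{-\alpha})$, and evaluating the phase there yields
\begin{equation*}
\Psi_c = (y - y')\cdot\eta + G'(y, \eta) + (k-1)G(y', \eta) + E_{k-1}(y', \eta) + R,
\end{equation*}
where $R$ collects $-\nabla_\zeta G'|_c \cdot \nabla_z G'|_c$ and $G'(y, \zeta_c) - G'(y, \eta)$, both lying in $S^{-\alpha}\cdot S^{1-\alpha} = S^{1-2\alpha} \subset S^{2(1-\alpha)}$. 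To convert $\Psi_c$ into the desired form I first invoke Remark~\ref{thm:switch-y-yprime} to replace $G'(y, \eta)$ by $G(y, \eta)$ at the cost of a further $S^{2(1-\alpha)}$ term, and then replace $G(y, \eta)$ by $G(y', \eta)$ by writing $G(y, \eta) - G(y', \eta) = (y - y')\cdot H(y, y', \eta)$ with $H \in S^{1-\alpha}$ and performing the fiber-variable change $\eta \mapsto \tilde\eta = \eta + H$; for large $|\eta|$ this is a diffeomorphism whose Jacobian $1 + O(S^{-\alpha})$ enters the amplitude, the $(y-y')\cdot H$ term is absorbed into the leading $(y - y')\cdot\tilde\eta$, and the further correction $G(y', \eta) - G(y', \tilde\eta) \in S^{1-2\alpha}$ is added to $E_k$.

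Collecting, the phase takes the required form $(y - y')\cdot\eta + kG(y', \eta) + E_k(y', \eta)$ with $E_k \in S^{2(1-\alpha)}$. The stationary-phase amplitude expansion contributes $1$ at leading order, with subleading $h$-corrections involving derivatives of $b'$, $b_{k-1}$, and $G'$ that are all in $S^{-\alpha}$ or smaller; combined with the Jacobian of the $\eta$-change of variable, this yields the new amplitude $1 + b_k$ with $b_k \in S^{1-\alpha}$. The statement for $(F_2^*)^k$ follows from the identical argument applied to the conjugate representation \eqref{eq:Sh-at-fiber-infinity-conjugate} of $F_2^*$, which carries phase $-G$ instead of $+G$; the induction then produces phase $-kG + E_k$. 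The main technical obstacle is the bookkeeping of symbol orders through the stationary phase and change of fiber variable steps, which hinges on the fact that an $\eta$-derivative of $G \in S^{1-\alpha}$ lies one order better in $S^{-\alpha}$, so that every product of derivatives of $G$ arising from the composition reliably falls into $S^{1-2\alpha} \subset S^{2(1-\alpha)}$ and can be absorbed into $E_k$ rather than polluting the leading $kG$ term.
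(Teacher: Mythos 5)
Your strategy---induction via composition, stationary phase in the intermediate pair, then a comparison of phase functions---is the same in outline as the paper's, but you place the two perturbations on different arguments, and this choice costs you. You put the perturbation of the left factor on the far-left variable $y$ (via Remark~\ref{thm:switch-y-yprime}) and the perturbation of the right factor on the far-right variable $y'$, so that neither depends on the intermediate variables $(z, \zeta)$. The paper instead places \emph{both} perturbations on the intermediate variable $y''$; after stationary phase in $(y'', \eta')$ the critical equations $\partial_{y''}\Phi = 0$, $\partial_{\eta'}\Phi = 0$ do not involve $y$, so the critical point is a function of $(y', \eta)$ alone, and the resulting error term $E'$ is automatically of the required form $E'(y', \eta)$ without any further reduction.

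Two concrete issues with your version. First, the error $R$ you claim arises from evaluating the phase at the critical point is in fact identically zero in your setup: $G'$ does not depend on $z$, so $\nabla_z G' \equiv 0$, and $\partial_z \Psi = -\zeta + \eta$ carries no perturbation, so $\zeta_c = \eta$ exactly and $G'(y, \zeta_c) - G'(y, \eta) = 0$. This is harmless (it simplifies the evaluation) but indicates the critical point was not worked out with your own phase. Second, and this is a genuine gap, your fiber change $\eta \mapsto \tilde\eta = \eta + H(y, y', \eta)$ used to trade $G(y, \eta)$ for $G(y', \eta)$ produces correction terms that inherit $y$-dependence through $H$: the Taylor remainders $kG(y', \eta) - kG(y', \tilde\eta)$ and $E_{k-1}(y', \eta) - E_{k-1}(y', \tilde\eta)$, as well as the Jacobian factor, all depend on $\eta = \eta(y, y', \tilde\eta)$. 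So your resulting $E_k$ is a function of $(y, y', \eta)$, not of $(y', \eta)$ as the lemma asserts. To remove the $y$-dependence you would need to iterate---peel off $(y - y')\cdot J$ with $J \in S^{2(1-\alpha)}$, change fiber variable again, and carry out an asymptotic summation---which your outline does not do. The paper's arrangement avoids the fiber change altogether precisely because both perturbations are attached to the variable being eliminated by stationary phase.
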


\begin{proof} See Appendix \ref{sec:composition}. \end{proof}

The key point here is that, up to a term vanishing faster as
$|\eta|\to \infty$, the effect on the phase function $\Phi$ of raising
the scattering matrix to the power $k$ is essentially to replace $G$ by
$kG$.


\section{Proof of the main theorem}\label{sec:main-theorem}
The main idea of the proof, motivated by \cite{Zelditch-Kuznecov, Z1997} and \cite{Gell-Redman-Hassell-Zelditch}, is the following observation: if $\{ \nu_h \} $ is a family of \textit{finite} measures on $\mathbb{S}^1$  parametrized by $h > 0$, and if
each Fourier coefficient of $\nu_h$ converges to that of a certain
finite measure $\nu$ as $h \to 0$, then $\nu_h$ converges to $\nu$ in
the weak-$*$ topology. In our case, however, we cannot apply this
directly, as the $\mu_h$ in \eqref{eq:measure-definition} are infinite measures. Instead we have the following variant. Consider the following weighted sup norm for functions on $\mathbb{S}^1$:
\begin{equation}
  \label{eq:weighted-norm}
  \norm[w]{f} = \sup_{z \in \mathbb{S}^1 \setminus \{ 1 \}} \absv{\frac{f(z)}{z - 1}},
\end{equation}
and the associated Banach space
\begin{equation}
  \label{eq:weighted-space}
  C^0_{w}(\mathbb{S}^1) = \{ f \in C^0(\mathbb{S}^1) : \exists \, g \in C^0(\mathbb{S}^1) \text{ such that } f(z) =  (z-1) g(z) \}.
\end{equation}

Then we show 
\begin{proposition}\label{prop:density}
Suppose that the (infinite) measures $\nu_h$ and $\nu$ act (by integration) as bounded linear functionals on  $C^0_{w}(\mathbb{S}^1)$. Moreover, assume that the norms of $\nu_h$ in the dual space are uniformly bounded in $h$. Then if 
\begin{equation}
\lim_{h \to 0} \int_{\mathbb{S}^1} p(e^{i\theta}) d\nu_h = \int_{\mathbb{S}^1} p(e^{i\theta}) d\nu
\label{p-convergence}\end{equation}
for all polynomials $p \in C^0_{w}(\mathbb{S}^1)$, then $\nu_h \to \nu$ in the weak-$*$ topology on every compact subset of $\mathbb{S}^1 \setminus \{ 1 \}$. 
\end{proposition}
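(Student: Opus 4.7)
The plan is a standard density-plus-uniform-boundedness argument. First I would identify the Banach space $C^0_w(\mathbb{S}^1)$ with $C^0(\mathbb{S}^1)$ via the isometric isomorphism $f \mapsto g$, where $g(z) := f(z)/(z-1)$ is defined at $z = 1$ by continuity using the defining property \eqref{eq:weighted-space}; under this identification, the weighted norm $\|f\|_w$ is precisely the sup norm of $g$. Trigonometric polynomials vanishing at $1$ correspond to arbitrary trigonometric polynomials on $\mathbb{S}^1$ via this map, and form the linear span of $\{p_k(z) = z^k - 1 : k \in \mathbb{Z}\}$ inside $C^0_w(\mathbb{S}^1)$, since $(z-1)z^j = p_{j+1} - p_j$.

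Next I would invoke the Stone--Weierstrass theorem to conclude that trigonometric polynomials are dense in $C^0(\mathbb{S}^1)$ in the sup norm; transferring through the isomorphism then gives that the polynomials in $C^0_w(\mathbb{S}^1)$ (in the sense of trigonometric polynomials vanishing at $1$) are dense in $C^0_w(\mathbb{S}^1)$ with respect to $\|\cdot\|_w$. By the hypothesis \eqref{p-convergence} and linearity, $\int p\, d\nu_h \to \int p\, d\nu$ for every polynomial $p \in C^0_w(\mathbb{S}^1)$. It is important here that ``polynomial'' be interpreted in the trigonometric sense (in both $z$ and $\bar z$), since polynomials in $z$ alone fail to be dense by the maximum modulus principle. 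This interpretation is the natural one given the context, where traces of $S_h^k - \Id$ for both positive and negative $k$ are the quantities being computed.

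Finally I would run the usual three-$\epsilon$ argument: given $f \in C^0_w(\mathbb{S}^1)$ and $\epsilon > 0$, choose a polynomial $p$ with $\|f - p\|_w < \epsilon$ and bound
\[
\Big| \int f\, d\nu_h - \int f\, d\nu \Big| \;\le\; \bigl(\|\nu_h\|_* + \|\nu\|_*\bigr)\|f - p\|_w + \Big| \int p\, d(\nu_h - \nu) \Big|,
\]
where $\|\cdot\|_*$ denotes the dual norm on $C^0_w(\mathbb{S}^1)$. The uniform bound $\|\nu_h\|_* \le C$ controls the first term by $(C + \|\nu\|_*)\epsilon$, while the second tends to zero as $h \to 0$; letting $\epsilon \to 0$ yields $\int f\, d\nu_h \to \int f\, d\nu$. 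To conclude the stated weak-$*$ convergence, observe that any $f \in C^0(\mathbb{S}^1)$ supported in a compact subset of $\mathbb{S}^1 \setminus \{1\}$ vanishes in a neighborhood of $1$ and hence lies in $C^0_w(\mathbb{S}^1)$, so the preceding argument applies. I do not anticipate any serious technical obstacle; the only delicate point is fixing the correct interpretation of ``polynomial'' so that Stone--Weierstrass delivers the required density.
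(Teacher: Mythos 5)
Your proof is correct and follows essentially the same route as the paper: establish density of polynomials vanishing at $1$ in $C^0_w(\mathbb{S}^1)$ by factoring out $(z-1)$ and appealing to density of trigonometric polynomials in $C^0(\mathbb{S}^1)$, then run a three-$\epsilon$ argument using the uniform dual-norm bound on $\nu_h$. Your explicit remark that ``polynomial'' must mean trigonometric polynomial (in $z$ and $\bar z$, equivalently $z^k$ for $k \in \mathbb{Z}$) is a worthwhile clarification of a point the paper leaves implicit, and is indeed the interpretation the paper uses, since the traces $\Tr(S_h^k - \Id)$ are computed for both positive and negative $k$.
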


\begin{proof} 
The proof of this proposition consists of two elementary steps. We first observe, as in \cite[Proof of Lemma 5.3] {Gell-Redman-Hassell-Zelditch}, that polynomials in $C^0_{w}(\mathbb{S}^1)$ are dense in $C^0_{w}(\mathbb{S}^1)$. The proof is so simple that we repeat it here: given $f \in C^0_{w}(\mathbb{S}^1)$, by definition $f = (z-1) g(z)$ for some $g \in C^0(\mathbb{S}^1)$. We approximate $g$ in $C^0(\mathbb{S}^1)$ by a sequence of polynomials $p_j$. Then $(z-1) p_j$ lie in $C^0_{w}(\mathbb{S}^1)$ and approximate $f$ in the $C^0_{w}(\mathbb{S}^1)$ norm.

Then, given $\epsilon > 0$, and a continuous function $f$ on the circle, supported away from $1$, we need to show that 
$$
\Big| \int f d\nu - \int f d\nu_h \Big| < C\epsilon,
$$
provided $h$ is sufficiently small. We choose a polynomial $p$ in $C^0_{w}(\mathbb{S}^1)$ such that 
$\| p - f \|_{[w]} < \epsilon$. Then we estimate 
\begin{equation}\begin{gathered}
\Big| \int f d\nu - \int f d\nu_h \Big| \\
\leq \Big| \int f d\nu - \int p d\nu \Big| + \Big| \int p d\nu - \int p d\nu_h \Big| + \Big| \int p d\nu_h - \int f d\nu_h \Big|.
\end{gathered}\end{equation}
The first term is bounded by $C_1 \| p - f \|_{[w]}$ where $C_1 = \| \nu \|_{(C^0_{w})^*}$ is the dual norm of $\nu$. The second term is bounded by $\epsilon$ provided that $h$ is sufficiently small, using \eqref{p-convergence}. The third term is bounded by $C_2 \| p - f \|_{[w]}$ where $C_2$ is a uniform bound on  $\| \nu_h \|_{(C^0_{w})^*}$. Taking $C = C_1 + C_2 + 1$, this completes the proof. 
\end{proof}

In the case of interest, 
$\nu_h$ will be the measure $\mu_h$ defined in \eqref{eq:measure-definition} and $\nu$ will
be the pushforward of a homogeneous measure. In view of
Proposition~\ref{prop:density} we need the following result.

\begin{proposition}\label{thm:trace-class}
  There exists $c > 0$ such that for $h$ sufficiently
  small
  \begin{equation}
    \label{eq:1}
   | \la \mu_h , f \ra | \le c \norm[w]{f}.
  \end{equation}

\end{proposition}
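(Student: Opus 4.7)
The plan is to reduce Proposition~\ref{thm:trace-class} to a trace-norm bound on $S_h - \Id$, and then to establish that bound from the oscillatory integral representation of Lemma~\ref{lem:S-structure}. The definition of $\|\cdot\|_{[w]}$ gives $|f(z)| \leq \|f\|_{[w]} |z - 1|$ for every $z \in \mathbb{S}^1$, so
\[
|\la \mu_h , f \ra | \leq h^{\gamma \alpha} \|f\|_{[w]} \sum_n |e^{2i\beta_{n,h}} - 1|.
\]
Since $S_h$ is unitary, $S_h - \Id$ is normal and its trace norm therefore coincides with the sum of the moduli of its eigenvalues; the sum above equals $\|S_h - \Id\|_{\tr}$. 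The proposition thus reduces to the uniform bound $\|S_h - \Id\|_{\tr} \leq C\, h^{-\gamma \alpha}$ for small $h$.

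To prove the reduced bound I would decompose $S_h - \Id = F_1 + (F_2 - \Id)$ via Lemma~\ref{lem:S-structure}. Since $F_1$ is a zeroth-order semiclassical FIO with compact microsupport and canonical relation a near-identity symplectomorphism, its trace norm obeys the standard Weyl-type estimate $\|F_1\|_{\tr} \leq C h^{-(d-1)}$, which is absorbed into $h^{-\gamma \alpha}$ since $\gamma \alpha = (d-1)\alpha/(\alpha-1) \geq d - 1$. The task reduces to bounding $\|F_2 - \Id\|_{\tr}$.

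From \eqref{eq:Sh-at-fiber-infinity}, $F_2 - \Id = \Op_h(c)$ with symbol $c(y',\eta,h) = e^{iG(y',\eta)/h}(1 + b) - 1$. I would invoke the standard trace-norm estimate for semiclassical pseudodifferential operators, of the schematic form $\|\Op_h(c)\|_{\tr} \leq C h^{-(d-1)} \|c\|_{L^1(T^*\mathbb{S}^{d-1})}$ modulo controlled symbol-derivative seminorms, and then estimate $\|c\|_{L^1}$ by splitting at the natural threshold $R_h := h^{-1/(\alpha-1)}$ at which $|G|/h$ crosses $1$. Using $|e^{iG/h} - 1| \leq \min(|G|/h, 2)$ together with the bounds $|G(y',\eta)| \leq C \la \eta \ra^{1-\alpha}$ from Proposition~\ref{thm:sojourn-near-infty} and $|b| \leq C \la \eta \ra^{-\alpha}$ from Lemma~\ref{lem:S-structure}, one obtains uniformly in $y'$
\[
\int |c(y',\eta,h)| \, d\eta \leq C \int_{|\eta| \leq R_h} d\eta + C h^{-1} \int_{|\eta| \geq R_h} \la\eta\ra^{1-\alpha} \, d\eta \leq C h^{-\gamma}.
\]
The two contributions are separately of size $R_h^{d-1} \sim h^{-\gamma}$ and $h^{-1} R_h^{d-\alpha} \sim h^{-\gamma}$; convergence of the second integral at infinity uses precisely the hypothesis $\alpha > d$. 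Combined with the $h^{-(d-1)}$ prefactor this gives $\|F_2 - \Id\|_{\tr} \leq C h^{-(d-1) - \gamma} = C h^{-\gamma \alpha}$, as required.

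The main technical obstacle is the trace-norm inequality for $\Op_h(c)$, since the $h$-dependent symbol $c$ contains the oscillatory factor $e^{iG/h}$ whose $\eta$-derivatives formally generate factors of $h^{-1}$, obstructing a naive Hilbert--Schmidt factorization. The natural remedy is a dyadic decomposition in $|\eta|$: for $|\eta| \leq R_h$ one has a compactly microsupported PDO whose trace norm is controlled by its microsupport volume, while for $|\eta| \geq R_h$ one exploits the Taylor expansion $e^{iG/h} = 1 + iG/h + O((G/h)^2)$ together with the decay $|G| \leq C \la\eta\ra^{1-\alpha}$ to reduce to a PDO with a genuinely decaying symbol that factors through Hilbert--Schmidt operators; the dyadic sums then converge on both sides of $R_h$ and deliver the $h^{-\gamma\alpha}$ bound computed above.
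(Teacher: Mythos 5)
Your argument is correct in substance and takes a genuinely different route from the paper. The paper does not bound the trace norm directly: it first proves the $\epsilon$-refined counting estimate of Proposition~\ref{thm:rough-eigenvalue-asymptotics} (at most $c\,\epsilon^{-\gamma}h^{-\alpha\gamma}$ eigenvalues at distance $>\epsilon$ from $1$), by splitting at the $\epsilon$-dependent frequency scale $|\eta|\sim(\epsilon h)^{-1/(\alpha-1)}$ — a finite-rank piece controlled by the Weyl law below that scale, and a piece of operator norm $\le C\epsilon$ above it, obtained by rescaling and Calder\'on--Vaillancourt — and then deduces \eqref{eq:1} by summing over dyadic shells $|z-1|\sim 2^{-p}$, the sum converging precisely because $\gamma<1$, i.e.\ $\alpha>d$. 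You instead go straight for $\| S_h-\Id\|_{\tr}\le Ch^{-\alpha\gamma}$ (which, by normality of $S_h-\Id$, immediately gives both Proposition~\ref{thm:trace-class} and Proposition~\ref{cor:traceclass}), splitting the symbol once at $|\eta|\sim h^{-1/(\alpha-1)}$ and trading eigenvalue counting for symbol-decay trace estimates; the two essential inputs are the same in both arguments (the Weyl-law phase-space count at the critical scale, and integrability at infinity from $\alpha>d$), but your route is more direct for the propositions at hand, while the paper's yields the counting estimate itself, which is of independent interest. Two points you should make explicit when fleshing this out: the step ``trace norm of the low-frequency block is controlled by its microsupport volume'' is really (rank of a spectral cutoff $\chi(h^2\Delta_{\mathbb{S}^{d-1}}/R_h^2)$) $\times$ (operator norm) plus an $O(h^\infty)$ cross term between the $h$-dependent cutoffs, so you still need the uniform $O(1)$ operator-norm bound for $\Op_h\big(e^{iG/h}(1+b)\big)$ — exactly the paper's semiclassical Calder\'on--Vaillancourt argument, which works because $h\partial_\eta$ applied to $e^{iG/h}$ produces $\partial_\eta G\in S^{-\alpha}$, not an $h^{-1}$ factor; and, as you yourself note, the inequality $\|\Op_h(c)\|_{\tr}\le Ch^{-(d-1)}\|c\|_{L^1}$ is false without derivative seminorms, so the high-frequency region must be run through a trace estimate with derivative control (or a Hilbert--Schmidt factorization), which is unproblematic there since each $\eta$-derivative of $e^{iG/h}$ costs $|\partial_\eta G|/h\le C\langle\eta\rangle^{-\alpha}/h\le C\langle\eta\rangle^{-1}$ once $|\eta|\gtrsim h^{-1/(\alpha-1)}$.
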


We also need 

\begin{proposition}\label{cor:traceclass}
For every $k \in \ZZ$, $S_h^k - \Id$ is trace class.
\end{proposition}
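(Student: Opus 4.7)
The plan is to reduce the statement for all $k \in \ZZ$ to the single claim that $S_h - \Id$ is trace class, and then to identify $S_h - \Id$, modulo a smoothing operator, as a classical pseudodifferential operator of order $1-\alpha$ on $\mathbb{S}^{d-1}$. Since $\alpha > d$ by hypothesis, this order is strictly less than $-(d-1) = -\dim \mathbb{S}^{d-1}$, which is the standard threshold for a classical $\PDO$ on a compact manifold to be of trace class.

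For the reduction: when $k \geq 1$ we write $S_h^k - \Id = (S_h - \Id) \sum_{j=0}^{k-1} S_h^j$, and since $S_h$ is unitary, the sum is bounded on $L^2(\mathbb{S}^{d-1})$, making the right-hand side trace class whenever $S_h - \Id$ is. When $k \leq -1$, use $S_h^{-1} = S_h^*$ together with the fact that the trace class ideal is closed under taking adjoints.

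To handle $S_h - \Id$, invoke the decomposition $S_h = F_1 + F_2$ from Lemma~\ref{lem:S-structure}. The operator $F_1$ has compact semiclassical microsupport, so for each fixed $h > 0$ its Schwartz kernel is smooth on $\mathbb{S}^{d-1} \times \mathbb{S}^{d-1}$, hence it is trace class. For $F_2 - \Id$, combine \eqref{eq:Sh-at-fiber-infinity} with the representation $\Id(y, y') = (2\pi h)^{-(d-1)} \int e^{i(y - y') \cdot \eta / h} d\eta$ to write, modulo a smooth kernel arising from the $\eta$-cutoff used to separate $F_1$ from $F_2$,
$$
(F_2 - \Id)(y, y') = \lp \frac{1}{2\pi h} \rp^{d-1} \int e^{i(y - y') \cdot \eta/h} \bigl[ e^{i G(y', \eta)/h}(1 + b(y, y', \eta, h)) - 1 \bigr] d\eta.
$$
Since $G \in S^{1-\alpha}$ and $b \in S^{-\alpha}$, splitting the amplitude as $(e^{iG/h} - 1) + e^{iG/h} b$ and writing $e^{iG/h} - 1 = (iG/h) \int_0^1 e^{i t G/h} dt$ shows the amplitude is a symbol in $\eta$ of order $1 - \alpha$, with constants depending on $h$ but uniform in $\eta$. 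This identifies $F_2 - \Id$ as a classical pseudodifferential operator on $\mathbb{S}^{d-1}$ of order $1 - \alpha$, which is trace class by the criterion above.

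The main bookkeeping step is verifying the symbol estimates on $e^{iG/h}(1+b) - 1$: each $\eta$-derivative brings down a factor of $\partial_\eta G \in S^{-\alpha}$, lowering the symbolic order by one, consistent with $S^{1-\alpha}$. The $h$-dependence of the constants is harmless for the present claim, since trace class is only required for each fixed $h > 0$; it will however matter for the quantitative bound in Proposition~\ref{thm:trace-class}.
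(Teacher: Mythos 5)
Your proof is correct, but it takes a genuinely different route from the paper for the essential case $k=1$. The reduction to $k=1$ (factoring $S_h^k-\Id$ as $S_h-\Id$ times a bounded operator, and taking adjoints for negative $k$) is exactly the paper's. For $k=1$, however, the paper does not look at the Schwartz kernel at all: it invokes Proposition~\ref{thm:rough-eigenvalue-asymptotics}, which counts the eigenvalues $z$ of $S_h$ with $|z-1|\in[2^{-j},2^{-j+1}]$ by $Ch^{-\alpha\gamma}2^{j\gamma}$, and sums $2^{-j+1}\cdot Ch^{-\alpha\gamma}2^{j\gamma}$ over $j$ using $\gamma<1$ (i.e.\ $\alpha>d$); since $S_h$ is unitary, $S_h-\Id$ is normal, so this dyadic sum bounds the trace norm. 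That route costs more (it rests on the harder quantitative eigenvalue estimate) but buys a trace-norm bound $O(h^{-\alpha\gamma})$ uniform in $h$, which is in the same spirit as what Proposition~\ref{thm:trace-class} requires. Your route instead works at fixed $h$: $F_1$ has compact microsupport, hence smooth kernel and is trace class, while $F_2-\Id$ is, modulo the smoothing term from the $\eta$-cutoff, a pseudodifferential operator with amplitude $e^{iG/h}(1+b)-1\in S^{1-\alpha}$, and $1-\alpha<-(d-1)$ puts it below the standard trace-class threshold on the compact manifold $\mathbb{S}^{d-1}$. This is more elementary and self-contained (it only uses Lemma~\ref{lem:S-structure} and the standard $\PDO$ criterion, and in fact makes rigorous the remark in Section~\ref{sec:Schwartz-kernel} that $S_h$ equals the identity up to a $\PDO$ of order $1-\alpha$), at the price of giving no uniformity in $h$ --- which, as you correctly note, the proposition does not demand. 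One caveat common to both arguments: the decomposition $S_h=F_1+F_2$ (and hence your argument, like the paper's proof via Proposition~\ref{thm:rough-eigenvalue-asymptotics}) is stated for nontrapping energies, with the trapping case handled by the separate decomposition of Section~\ref{sec:trapping}; so your proof sits at the same level of generality as the paper's and is not deficient on that score.
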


Propositions~\ref{thm:trace-class} and \ref{cor:traceclass} will be proved in Section \ref{sec:eigenvalue-distribution}. 

It is easy to see that the polynomials $z^k - 1$, for $k \neq 0 \in \ZZ$, form a basis of the polynomials that vanish at $1$. 
We thus need to show that, for each $k$,  the quantity 
\begin{equation}
\int (e^{ik\theta} - 1) d\mu_k = \Trace (S_h^k - \Id)
\label{k-Four-coeff-h}\end{equation} 
converges as $h \to 0$, and to find a limit measure $\mu$ such that the limit of \eqref{k-Four-coeff-h} is equal to 
\begin{equation}
\int (e^{ik\theta} - 1) d\mu.
\label{k-Four-coeff}\end{equation}

It turns out that the limit of the quantity \eqref{k-Four-coeff-h} is given by a power $c_{\pm} |k|^\gamma$, where the coefficient $c_\pm$ depends only on the sign of $k$. Such a homogenous `Fourier series' comes from a `homogeneous measure'. We now describe precisely what this entails. 

\begin{definition}\label{def:hommeas} Let $\beta < -1$. We say that a measure $\mu$ on $\mathbb{S}^1 \setminus \{ 1 \}$ is the pushforward of a homogeneous measure of degree $\beta$ if there is a measure $\nu$ on $\RR$ of the form 
\begin{equation}
\nu = \begin{cases}
c_1 \theta^{\beta} d\theta, \quad  \, \theta > 0 \\
c_2 |\theta|^{\beta} d\theta, \quad \theta < 0,
\end{cases}
\label{hom-measures}\end{equation}
such that $\mu$ is the pushforward of $\nu$ under the quotient map $F : \RR \mapsto \mathbb{S}^1 = \RR / 2\pi \ZZ$:
$$
\mu = F_*(\nu) \text{ on } \mathbb{S}^1 \setminus \{ 1 \}.
$$
\end{definition}

To state the following lemma, we will need to define the constant 
\begin{equation}\label{eq:Gamma}
\Gamma = \int_0^\infty (e^{i \theta} - 1) \theta^{- \gamma - 1} d\theta.
\end{equation}

for $0 < \gamma < 1$.  Note that $\Gamma$ is finite for $\gamma$ in
this range.
\begin{lemma}\label{lem:hom-measures}  Suppose that $\mu$ is a measure on $\mathbb{S}^1$ that lies in the dual space of $C^0_w(\mathbb{S}^1)$, and is such that, for some
$\gamma > 0$,  
\begin{equation}
\int_{\mathbb{S}^1} \big( e^{ik\theta} - 1 \big) d\mu = 
\begin{cases} (\Gamma c_1 + \overline{\Gamma} c_2) k^{\gamma}, \quad \ \ k = 1, 2, 3, \dots \\
(\overline{\Gamma} c_1 + \Gamma c_2) |k|^{\gamma}, \quad k = -1, -2, -3, \dots 
\end{cases}.
\label{k-hom}\end{equation}
Then on $\mathbb{S}^1 \setminus \{ 1 \}$,  $\mu$ is the pushforward of a
homogeneous measure of degree $\beta = -1-\gamma$. Indeed, it is given
by $\nu$ in \eqref{hom-measures} (with the same constants $c_1$ and $c_2$).
\end{lemma}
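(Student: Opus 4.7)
The plan is to construct the candidate measure $\tilde\mu := F_*\nu$ as a linear functional on $C^0_w(\mathbb{S}^1)$, verify that it lies in the dual space, compute the integrals $\int (e^{ik\theta}-1)\, d\tilde\mu$ to show they match the right-hand side of \eqref{k-hom}, and then conclude $\mu = \tilde\mu$ on $\mathbb{S}^1 \setminus \{1\}$ by a uniqueness argument based on density of polynomials in $C^0_w(\mathbb{S}^1)$.

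First I would observe that in order for $\Gamma$ in \eqref{eq:Gamma} to be finite one needs $0 < \gamma < 1$, so this range is implicit in the hypothesis. I would then define, for $f \in C^0_w(\mathbb{S}^1)$ extended to $\RR$ by $2\pi$-periodicity,
\[
\langle \tilde\mu, f\rangle := c_1 \int_0^\infty f(e^{i\theta})\, \theta^{-1-\gamma}\, d\theta + c_2 \int_0^\infty f(e^{-i\theta})\, \theta^{-1-\gamma}\, d\theta,
\]
and check this is a bounded functional on $C^0_w$: writing $f = (z-1) g(z)$, one has $|f(e^{i\theta})| \leq \norm[w]{f} \cdot |e^{i\theta}-1| \leq \norm[w]{f} \cdot \min(|\theta|, 2)$, which together with the integrability of $\theta^{-\gamma}$ at $0$ (from $\gamma < 1$) and $\theta^{-1-\gamma}$ at infinity (from $\gamma > 0$) gives $|\langle \tilde\mu, f\rangle| \leq C \norm[w]{f}$. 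On $\mathbb{S}^1 \setminus \{1\}$ this functional is represented by a Radon measure, namely $F_*\nu$.

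Next I would verify \eqref{k-hom} for $\tilde\mu$ by substituting $u = |k|\theta$ in each of the two integrals above, extracting the scaling factor $|k|^\gamma$ and leaving integrals that evaluate to $\Gamma$ or $\overline\Gamma$ depending on the sign of the exponent of $e$. This yields $(c_1 \Gamma + c_2 \overline\Gamma) k^\gamma$ for $k > 0$ and $(c_1 \overline\Gamma + c_2 \Gamma) |k|^\gamma$ for $k < 0$, matching \eqref{k-hom} exactly. Here the key bookkeeping is tracking which half-line contributes $\Gamma$ versus $\overline\Gamma$ for each sign of $k$.

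Finally, both $\mu$ and $\tilde\mu$ lie in $(C^0_w)^*$ and, by \eqref{k-hom}, they agree on every polynomial $z^k - 1$, $k \in \ZZ \setminus \{0\}$. Finite linear combinations of these span the Laurent polynomials vanishing at $z = 1$, which are dense in $C^0_w(\mathbb{S}^1)$ by the Stone--Weierstrass-style argument already used in the proof of Proposition \ref{prop:density} (approximate $g$ uniformly by Laurent polynomials $q_j$ and note $(z-1)q_j \to (z-1)g = f$ in the $\norm[w]{\cdot}$ norm). Hence $\mu - \tilde\mu$ vanishes on a dense subspace of $C^0_w$ and so is zero in $(C^0_w)^*$, giving $\mu = \tilde\mu$ on $\mathbb{S}^1 \setminus \{1\}$. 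The main technical step is verifying that $\tilde\mu$ is actually in the dual of $C^0_w$; the Fourier-coefficient computation and the density/uniqueness step are both routine once that is in hand.
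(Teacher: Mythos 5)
Your proof is correct and follows essentially the same route as the paper's: verify the Fourier coefficients of the candidate pushforward measure by the scaling substitution $u = |k|\theta$, and then conclude equality from uniqueness via density of the polynomials $z^k - 1$ in $C^0_w(\mathbb{S}^1)$. The only small addition is that you explicitly check $\tilde\mu \in (C^0_w)^*$, a point the paper leaves implicit.
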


\begin{remark} In this lemma, we are only making a statement about
  $\mu$ away from the point $1$. Notice that there could be an atom at $1$ about which we can say nothing, as this would not affect the integrals in \eqref{k-hom}.  \end{remark}

\begin{proof}
We first note that the integrals in \eqref{k-hom} determine $\mu$ uniquely as an element of the dual space of $C^0_w(\mathbb{S}^1)$, hence uniquely as a measure away from the point $1$. This is an immediate consequence of the density of polynomials in $C^0_w(\mathbb{S}^1)$. In view of this, it suffices to show that the measures in Definition~\ref{def:hommeas}, homogeneous of degree $\beta = -1-\gamma$, have `Fourier coefficients' of the form \eqref{k-hom}.

With $\beta = - \gamma - 1$, let $\nu = H(\theta) \theta^{\beta} d\theta$ where $H$ is the
Heaviside function, and let $\mu$ be the pushforward of $\nu$. We compute, for $k > 0$, 
\begin{equation}\begin{gathered}
\int_{\mathbb{S}^1} \big( e^{ik\theta} - 1 \big) d\mu = \int_{\RR} \big( e^{ik\theta} - 1 \big) d\nu  \\
= \int_0^\infty \big( e^{ik\theta} - 1 \big) \theta^{\beta} d\theta \\
= k^\gamma \int_0^\infty \big( e^{i\theta} - 1 \big) 
\theta^{\beta} d\theta 
= k^\gamma \Gamma.
\end{gathered}\end{equation}
Similarly, for $k < 0$ we find 
$$
\int_{\mathbb{S}^1} \big( e^{ik\theta} - 1 \big) d\mu =  |k|^\gamma \int_0^\infty \big( e^{-i\theta} - 1 \big) 
\theta^{\beta} d\theta = |k|^\gamma \overline{\Gamma}.
$$
Similarly, if $\nu = (1 - H(\theta)) |\theta|^\beta$, then for $k > 0$
\begin{equation*}\begin{gathered}
\int_{\mathbb{S}^1} \big( e^{ik\theta} - 1 \big) d\mu = \int_{-\infty}^0 (e^{ik\theta} - 1) |\theta|^\beta d\theta =  \int_0^\infty \big( e^{-ik\theta} - 1 \big) \theta^\beta d\theta  
= k^\gamma \overline{\Gamma},
\end{gathered}\end{equation*}
and $\int_{\mathbb{S}^1} \big( e^{ik\theta} - 1 \big) d\mu = |k|^\gamma \Gamma$ for $k < 0$.
\end{proof}

We can now prove the Main Theorem.
\begin{proof}[Proof of the Main Theorem]
  Proposition~\ref{thm:trace-class} shows that the measures $\mu_h$
  are uniformly bounded in the dual space of $C^0_{w}(\mathbb{S}^1)$
  (since $\alpha > d$). This means that we can
  apply Proposition~\ref{prop:density}, showing that $\mu_h$
  converges to a measure $\mu$ on $C^0_{w}$ provided that the
  convergence on polynomials vanishing at $1$ in  \eqref{p-convergence}
  holds. Since the
  polynomials $e^{ik\theta} - 1$ for $k \neq 0 \in \ZZ$ are a basis
  for polynomials vanishing at $1$, it suffices to check
  \eqref{p-convergence} for these polynomials. So this requires
  computing the limit, as $h \to 0$, of $\Trace (S_h^k - \Id)$. This
  we shall do in the Section \ref{sec:traces-and-compositions}, and the result is \eqref{trace2}
  and \eqref{trace3}. That is, the integrals are given by
  $(\sqrt{E}/2 \pi)^{d-1}ck^{\gamma}$ for $k >0$ and $(\sqrt{E}/2 \pi)^{d-1}\overline{c}k^{\gamma}$ for $k < 0$,
  where $c$ is the constant in \eqref{eq:constants-3}, in particular
  $c = a_1 \Gamma + a_2 \overline{\Gamma}$ for $a_1, a_2$ in
  \eqref{eq:constants-3}. Thus by Lemma~\ref{lem:hom-measures}, the
  homogeneous measure $\mu$ given by the pushforward of $a_1(\sqrt{E}/2 \pi)^{d-1} H \theta^\beta + a_2 (\sqrt{E}/2 \pi)^{d-1} (1 - H)
  |\theta|^\beta$ which pairs with $e^{ik \theta} - 1$ to give the
  above values. \end{proof}

We now prove Corollary \ref{thm:counting}.
\begin{proof}[Proof of Corollary \ref{thm:counting}]
  Given $0 < \phi_0 < \phi_1 < 2\pi$, and let $1_{[\phi_0, \phi_1]} \colon
  \mathbb{S}^1 \lra \mathbb{R}$ be the indicator function of the
  corresponding sector of the circle, $1_{[\phi_0, \phi_1]}(e^{i
    \theta}) = 1$ if $\phi_0 \le \theta \le \phi_1$ modulo $2\pi$ and
  is zero otherwise.  Then for $N(\phi_0, \phi_1)$ as defined in the corollary
$$
N(\phi_0, \phi_1) = \Tr(1_{[\phi_0, \phi_1]}(S_h))
$$
Let $f$ and $g$ are continuous, non-negative
  functions, on the circle supported on $\mathbb{S}^1 \setminus 1$
  such that $ f \le 1_{[\phi_0, \phi_1]} \le g$.  Then 
$$
\Tr f(S_h) \le \Tr(1_{[\phi_0, \phi_1]}(S_h)) \le \Tr g(S_h),
$$
and all three quantities are finite.  Thus
$$
\int f \mu + o(1) = \la \mu_h, f \ra \le h^{\alpha \gamma}  N(\phi_0,
\phi_1) \le  \la \mu_h, g \ra = \int g \mu + o(1),
$$
where $o(1)$ denotes a quantity which goes to $0$ as $h \to 0$.
But for any $\epsilon > 0$ we can choose $f$ and $g$ can be chosen so
that $\int f \mu = \int 1_{[\phi_0, \phi_1]} \mu - \epsilon, \int g \mu
= \int 1_{[\phi_0, \phi_1]} \mu + \epsilon$, and thus for any $\epsilon$
we have 
$$
\int 1_{[\phi_0, \phi_1]} \mu - \epsilon + o(1)  \le h^{\alpha \gamma}
N(\phi_0, \phi_1)  \le \int 1_{[\phi_0, \phi_1]} + \epsilon+ o(1),
$$
so $\lim h^{\alpha \gamma}N(\phi_0, \phi_1)  = \int 1_{[\phi_0,
  \phi_1]} \mu$, proving the Corollary.
\end{proof}


\section{Traces and compositions}\label{sec:traces-and-compositions}

We now compute the traces
\begin{equation}
\lim_{h \to 0} h^{\alpha \gamma} \tr (S_h^k - \Id),
\label{trace}\end{equation}
again with $\gamma = (d - 1)/(\alpha - 1)$, assuming still that
$\alpha > d$.  
Indeed, we
will prove
\begin{lemma}\label{thm:trace-formula}
There is a constant $c$ such that for $k \in
  \mathbb{Z}$, 
$$
\lim_{h \to 0} \la \mu_h, z^k - 1 \ra = \left\{
  \begin{array}{cc}
     c (2\pi)^{-(d - 1)}  k^\gamma & \mbox{ if } k \ge 0 \\
     \overline{c}  (2\pi)^{-(d - 1)} |k|^\gamma &\mbox{ if } k < 0
  \end{array} \right.
$$
Indeed, $c = a_1 \Gamma + a_2 \overline{\Gamma}$ where $a_1$ and $a_2$
are defined in \eqref{eq:constants-3}
\end{lemma}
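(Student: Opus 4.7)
The plan is to compute $\tr(S_h^k - \Id)$ by writing $S_h = F_1 + F_2$ as in Lemma \ref{lem:S-structure}, expanding $S_h^k$, and showing that only the single term $F_2^k$ (or $(F_2^*)^{|k|}$ when $k<0$) contributes to the leading asymptotic of order $h^{-\alpha\gamma}$. First, I would treat $k \geq 1$ and deduce the $k \leq -1$ case by taking adjoints. Expanding $(F_1 + F_2)^k = F_2^k + R_k$, each term in $R_k$ contains at least one factor of $F_1$, hence is a compactly microsupported semiclassical FIO of order $0$. Standard semiclassical trace bounds give $\tr R_k = O(h^{-(d-1)})$, and since $h^{\alpha\gamma}h^{-(d-1)} = h^{(d-1)/(\alpha-1)} \to 0$, these terms are negligible. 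The identity $\Id$ cancels with the diagonal contribution of the constant $1$ appearing in the amplitude, as I explain below.

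Next, using Lemma \ref{thm:composition}, I would write
\[
\tr F_2^k = \left(\frac{1}{2\pi h}\right)^{d-1} \int\int e^{i(kG(y,\eta) + E_k(y,\eta))/h}(1 + b_k(y,y,\eta,h))\, dy\, d\eta,
\]
so that
\[
\tr(F_2^k - \Id) = \left(\frac{1}{2\pi h}\right)^{d-1} \int\int \left( e^{i(kG+E_k)/h}(1+b_k) - 1 \right) dy\, d\eta,
\]
where we absorb $-1$ by noting the identity kernel corresponds to $b_k = 0$, $G = E_k = 0$. The key step is the rescaling $\eta = h^{-1/(\alpha-1)}\zeta$, under which $d\eta = h^{-\gamma}d\zeta$ (since $\gamma = (d-1)/(\alpha-1)$) and the leading part of the phase becomes
\[
\frac{kG(y,\eta)}{h} = \frac{k\, g(y,\hat\zeta)|\zeta|^{1-\alpha} h^{-(1-\alpha)/(\alpha-1)}}{h} + O(h^{\epsilon/(\alpha-1)}) = k\, g(y,\hat\zeta)|\zeta|^{1-\alpha} + o(1).
\]
The error terms $E_k/h \in h^{-1}\cdot S^{2(1-\alpha)}$ and the amplitude correction $b_k \in S^{1-\alpha}$ both become $o(1)$ uniformly on compact sets of $\zeta$ after rescaling, and I would use integration by parts in $\zeta$ on large balls to control the tails. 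After the rescaling and gathering powers of $h$, we find $h^{-(d-1)} \cdot h^{-\gamma} = h^{-\alpha\gamma}$, confirming the predicted order of growth, and
\[
\lim_{h\to 0} h^{\alpha\gamma}\tr(F_2^k - \Id) = \frac{1}{(2\pi)^{d-1}} \int\int_{\RR^{d-1}} \left(e^{i k g(y,\hat\zeta)|\zeta|^{1-\alpha}} - 1\right) dy\, d\zeta.
\]

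Finally, I would evaluate this limiting integral by passing to polar coordinates $\zeta = r\omega$, $\omega\in \mathbb{S}^{d-2}$, $r > 0$, and on each ray making the substitution $u = r^{1-\alpha}$, which converts
\[
\int_0^\infty (e^{i k g(y,\omega)r^{1-\alpha}} - 1)\, r^{d-2}\, dr = \frac{1}{\alpha - 1} \int_0^\infty (e^{i k g(y,\omega) u} - 1)\, u^{-1-\gamma}\, du.
\]
A further substitution $v = |k g(y,\omega)| u$, together with the definition \eqref{eq:Gamma} of $\Gamma$ (finite precisely because $\alpha > d$ forces $0 < \gamma < 1$), gives $(\alpha-1)^{-1}(k|g|)^\gamma \Gamma$ when $kg > 0$ and the conjugate $(\alpha-1)^{-1}(k|g|)^\gamma \overline{\Gamma}$ when $kg < 0$. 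Separating the $\omega, y$ integration into the regions $\{g > 0\}$ and $\{g < 0\}$, and defining
\[
a_1 = \frac{1}{\alpha-1}\int\int_{\{g>0\}} g(y,\omega)^\gamma\, dy\, d\omega, \quad a_2 = \frac{1}{\alpha-1}\int\int_{\{g<0\}} |g(y,\omega)|^\gamma\, dy\, d\omega,
\]
the limit becomes $k^\gamma(a_1\Gamma + a_2\overline{\Gamma})/(2\pi)^{d-1}$ for $k \geq 1$. The $k \leq -1$ case follows since $(F_2^*)^{|k|}$ has phase $-|k|G + E_{|k|}$ by Lemma \ref{thm:composition}, which conjugates the leading integral, yielding $|k|^\gamma \overline{c}/(2\pi)^{d-1}$.

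The main obstacle will be the uniform control of the error terms under the $\eta \to h^{-1/(\alpha-1)}\zeta$ rescaling, particularly justifying dominated convergence: the amplitude correction $b_k$ and the subleading phase contribution $E_k$ are only symbolic in $\eta$, so after rescaling to the variable $\zeta$ where $|\zeta|$ can still be arbitrarily large, one must bound the oscillatory integral in the regime where $|\zeta|$ is near infinity. This will require a non-stationary phase argument exploiting that $d_\zeta(k g(y,\hat\zeta)|\zeta|^{1-\alpha})$ is nonvanishing and of order $|\zeta|^{-\alpha}$, combined with the fact that the integrand $e^{ikg|\zeta|^{1-\alpha}} - 1$ itself is $O(|\zeta|^{1-\alpha})$ there, both of which are integrable against $|\zeta|^{d-2}d|\zeta|$ exactly under the hypothesis $\alpha > d$.
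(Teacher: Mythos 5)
Your proposal follows essentially the same route as the paper: discard every term containing $F_1$ via the $O(h^{-(d-1)})$ trace bound for compactly microsupported FIOs, represent $F_2^k$ (resp.\ $(F_2^*)^{|k|}$) by Lemma~\ref{thm:composition}, restrict to the diagonal, rescale $\eta$ by $h^{1/(\alpha-1)}$ and pass to the limit by dominated convergence (the paper simply dominates by $C|\dspharv'|^{1-\alpha}$ using $|e^{i\theta}-1|\le|\theta|$, so your extra non-stationary-phase step for the tails is unnecessary), then evaluate the limiting integral in polar coordinates exactly as in \eqref{eq:constants-1}--\eqref{eq:constants-3}, with the $k<0$ case obtained by conjugation (the paper invokes unitarity of $S_h$ instead of the $(F_2^*)^{|k|}$ representation, which amounts to the same thing). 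The only discrepancy is the prefactor $1/(\alpha-1)$ in your $a_1, a_2$ versus $1/(1-\alpha)$ in \eqref{eq:constants-3}; your sign is the one produced by the orientation reversal in the substitution $\rho = r^{1-\alpha}$, so this appears to be a typo in the paper rather than a gap in your argument.
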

Note that any semiclassical, zeroth order FIO with compact
microsupport has trace bounded by $C h^{-(d-1)}$ (see e.g.\
\cite[Appendix]{Gell-Redman-Hassell-Zelditch}.) Therefore, in the
limit above we may replace $S_h^k$ by $F_2^k$.  Indeed, in
$$
S_h^k - \Id = F_2^k - \Id + \sum_{j = 0}^{k - 1} {j \choose k} F_1^{k - j} F_2^j,
$$
all the terms in the sum on the right have compact microsupport, are
thus trace class with trace bounded by $h^{-(d -1)}$, and as we will
see, the trace of $F_2^k - \Id$ increases at the rate $h^{-\alpha
  \gamma}$.  Since
$$
\alpha \gamma = \alpha (d - 1)/(\alpha - 1) > d - 1,
$$
the trace of $F_2^k - \Id$ contributes the leading order part of
$\Tr(S_h^k - \Id)$.
 For the same reason, we
can replace the identity operator by another FIO that differs from it
by an operator with compact microsupport. So we can restrict to the
microlocal region $|\dspharv| \geq R$ for arbitrary $R$, and using $h^{\alpha \gamma}
h^{-(d - 1)} = h^\gamma$ and Lemma \ref{thm:composition}, we can write
the terms Schwartz kernel of $S_h^k - \Id$ which contribute to the
trace to leading order in the form
\begin{equation}\label{eq:what-you-are-taking-trace-of}
 \int\limits_{|\dspharv| \geq R} e^{\frac{i}{h} \big( (\sphvar - \sphvar') \cdot \dspharv  + kG(\sphvar, \dspharv) \big)} \Big( 1 + b_k(\sphvar, \sphvar', \dspharv, h) \Big)   \, d\dspharv 
- \int\limits_{|\dspharv| \geq R} e^{\frac{i}{h}  (\sphvar - \sphvar')
  \cdot \dspharv  } \, d\dspharv,
\end{equation}

We then compute \eqref{trace} by setting $y=y'$ and integrating over
$y$.  To be more precise, the Schwartz kernel corresponding to the
oscillatory integral expression in
\eqref{eq:what-you-are-taking-trace-of} is actually a
\textit{half-density} on $\mathbb{S}^{d-1} \times \mathbb{S}^{d-1}$
acting on half-densities on $\mathbb{S}^{d-1}$, meaning that if, for
the moment, $H(y, y')$ denotes the distribution in
\eqref{eq:what-you-are-taking-trace-of}, then $H(y, y') |dy
dy'|^{1/2}$ is the Schwartz kernel -- more precisely the Schwartz
kernel is a finite sum of these -- and it acts on half densities
$\phi(y)|dy|^{1/2}$ by
\begin{equation}\label{eq:half-densities}
\phi(y)|dy|^{1/2} \mapsto (\int H(y, y') \phi(y') |dy'|) |dy|^{1/2}.
\end{equation}
 It is standard that the trace of this operator is $\int H(y, y)
 |dy|$, and thus we are tasked with computing
\begin{equation}
 \lim_{h \to 0} \frac{h^{\gamma}}{ (2\pi)^{d-1}}  \Bigg(  \int\limits_{|\dspharv| \geq R} e^{ikG(\sphvar, \dspharv)/h}  a_k(\sphvar, \sphvar, \dspharv, h)  \, dy   \, d\dspharv 
+ \int\limits_{|\dspharv| \geq R} \Big( e^{ikG(\sphvar, \dspharv)/h} - 1 \Big) \, dy \, d\dspharv \Bigg) 
\end{equation}
Since $a_k= O(|\dspharv|^{1-\alpha})$ and $\alpha > d$, the first
integral is absolutely convergent. Due to the positive power of $h$
out the front, this term is zero in the limit $h \to 0$.

So consider the second term. We write
\begin{equation}\label{eq:G-decomposition}
G(\sphvar, \dspharv) = |\dspharv|^{1-\alpha} g(\sphvar, \hat \dspharv) + |\dspharv|^{1-\alpha-\epsilon}  \tilde g(\sphvar, \dspharv),
\end{equation}
where $\wt{g}$ is a symbol of order $0$. We change variable to $ \dspharv'
= \dspharv (h/k)^{1/(\alpha - 1)}$ to obtain
\begin{equation*}
  \begin{split}
&\frac{h^{\gamma}}{(2\pi)^{d-1}} \int\limits_{|\dspharv| \geq R} \Big( e^{ikG(\sphvar, \dspharv)/h} - 1 \Big) \, dy \, 
d\dspharv \\
& \qquad =     \frac{k^{\gamma}}{(2\pi)^{d-1}} \int \Big( e^{i \big( g(\sphvar,
      \hat\dspharv')|\dspharv'|^{1-\alpha} + h^{\epsilon/(\alpha - 1)} k^{- \epsilon/(\alpha - 1)} \tilde
      g(\sphvar, \dspharv' (k/h)^{1/(\alpha - 1)}) \big)} - 1 \Big) \, dy \,  d\dspharv'.
  \end{split}
\end{equation*}
The integrand in this integral is dominated for small $h$ by
$C|\dspharv'|^{1-\alpha}$ for $C > 0$ independent of $h$, which is integrable as $\alpha > d$. Thus, by the dominated convergence theorem, we can take the pointwise limit inside the integral, and obtain 
\begin{equation}
\lim_{h \to 0} \la \mu_h, z^k - 1 \ra = \lim_{h \to 0} h^{(d-1)\alpha/(\alpha - 1)} \tr (S_h^k - \Id) =
c (1/(2\pi))^{d-1} k^{\gamma}, \quad 
\label{trace2}\end{equation}
where
\begin{equation}
c  = \int \Big( e^{i  g(\sphvar, \hat\dspharv')|\dspharv'|^{1-\alpha} } - 1
\Big) \, dy \, d\dspharv', \quad k \geq 1.\label{eq:constant}
\end{equation}

The fact that $S_h$ is unitary immediately implies 
\begin{equation}
\lim_{h \to 0} h^{(d-1)\alpha/(\alpha - 1)} \tr (S_h^k - \Id) =
\overline{c} (1/(2\pi))^{d-1} |k|^{\gamma} \quad k \leq -1.  
\label{trace3}\end{equation}

It remains only to evaluate $c$. Write
$$
g(\sphvar, \hat\dspharv') = g_+(\sphvar, \hat\dspharv') - g_-(\sphvar,
\hat\dspharv'),
$$
where $g_+ = \max\{ g, 0\}, g_- = \max\{- g, 0\}$.  Then
\begin{equation*}
  \begin{split}
    \int \Big( e^{i g(\sphvar, \hat\dspharv')|\dspharv'|^{1-\alpha} }
    - 1 \Big) \, dy \, d\dspharv' &= \int \Big( e^{i g_+(\sphvar,
      \hat\dspharv')|\dspharv'|^{1-\alpha} } - 1 \Big) \, dy \,
    d\dspharv' \\
    &\qquad + \int \Big( e^{ - i g_-(\sphvar,
      \hat\dspharv')|\dspharv'|^{1-\alpha} } - 1 \Big) \, dy \,
    d\dspharv'.
  \end{split}
\end{equation*}
Considering the first term on the right hand side, we write $\dspharv' = r
\hat\dspharv'$ with $\hat\dspharv'  = \dspharv'/ |\dspharv'|$ and for fixed $\sphvar, \hat\dspharv'$ with $g_+(\sphvar,
\hat\dspharv') \neq 0$, compute
\begin{equation}
  \label{eq:constants-1}
  \begin{split}
   & \int \Big( e^{i  g_+(\sphvar, \hat\dspharv')|\dspharv'|^{1-\alpha} } - 1
\Big) d\dspharv' \\
&\qquad = \int \Big( e^{i  r^{1 - \alpha} } - 1
\Big) g_+^\gamma r^{d - 2} dr d\hat\dspharv' d\sphvar \\
&\qquad =  \int g_+^\gamma \lp \int ( e^{i  r^{1 - \alpha} } - 1)
 r^{d - 2} dr \rp d\hat\dspharv' d\sphvar \\
&\qquad =  \int g_+^\gamma \lp \int ( e^{i \rho } - 1)
 \frac{1}{1 - \alpha}  \rho^{- \gamma - 1} d\rho \rp d\hat\dspharv'
 d\sphvar \\
&\qquad =  \frac{\Gamma}{1 - \alpha} \int g_+^\gamma d\hat\dspharv' d\sphvar 
  \end{split}
\end{equation}
where in the first step we set $r = g_+^{1/(1 - \alpha)} \wt{r}$,
in the third step we set $r^{1 - \alpha} = \rho$, and where $\Gamma$
is the constant defined in \eqref{eq:Gamma}.  A similar computation
shows that 
\begin{equation}
  \label{eq:constants-2}
  \int \Big( e^{ - i  g_-(\sphvar, \hat\dspharv')|\dspharv'|^{1-\alpha} } - 1
  \Big) d\dspharv'  =  \frac{\overline{\Gamma}}{1 - \alpha} \int g_-^\gamma d\hat\dspharv' d\sphvar.
\end{equation}
and thus it follows that
\begin{equation}
c = a_1 \Gamma + a_2 \overline{\Gamma} , \mbox{ where } a_1 =
\frac{1}{1 - \alpha} \int g_+^\gamma d\hat\dspharv' d\sphvar, \mbox{ and }  a_2 =
\frac{1}{1 - \alpha} \int g_-^\gamma d\hat\dspharv'
d\sphvar \label{eq:constants-3}
\end{equation}


\section{The asymptotic distribution of phase shifts}\label{sec:eigenvalue-distribution}

The aim of this section is to prove Propositions \ref{thm:trace-class}
and \ref{cor:traceclass}, to which end we must first obtain an
estimate for the rate at which eigenvalues of $S_h$ accumulate at $1$

\subsection{Eigenvalue accumulation at $1$}\label{sec:eigenvalue-accumulation}

\begin{proposition}\label{thm:rough-eigenvalue-asymptotics}
  Let $\{ e^{2 i\beta_{h, n}} \}_{n = 1}^\infty$ be the eigenvalues of the scattering
  matrix $S_h$.  There exists a
  constant $c > 0$ such that for each $\epsilon > 0$ sufficiently
  small,
  \begin{equation}
    \label{eq:away-from-1}
    \# \{ n : | e^{2 i\beta_{h, n}} - 1 | > \epsilon \} \le c
    \epsilon^{-\gamma} h^{-\alpha \gamma},
  \end{equation}
where as above $\gamma = (d - 1)/(\alpha - 1)$.
\end{proposition}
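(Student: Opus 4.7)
The plan is to invoke the Courant-Fischer min-max principle. It suffices to exhibit a closed subspace $V \subset L^2(\mathbb{S}^{d-1})$ with $\|(S_h - \Id)u\|_{L^2} \leq (\epsilon/2)\|u\|_{L^2}$ for every $u \in V$, since the span $E_\epsilon$ of eigenvectors of $S_h$ with $|e^{2i\beta_{n,h}} - 1| > \epsilon$ then has trivial intersection with $V$, forcing the count in \eqref{eq:away-from-1} to satisfy $N_\epsilon = \dim E_\epsilon \leq \dim V^\perp$. Motivated by the scale at which $G(y,\eta)/h$ becomes small in Lemma~\ref{lem:S-structure}, I would set $R = c(\epsilon h)^{-1/(\alpha-1)}$ for a small constant $c > 0$, and take $V$ to be the range of the spectral projector of $h^2\Delta_{\mathbb{S}^{d-1}}$ on the interval $(R^2, \infty)$. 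By Weyl's law on the sphere, $\dim V^\perp \leq C(R/h)^{d-1}$, and the identity $\gamma(\alpha - 1) = d - 1$ (equivalently $\gamma + (d-1) = \alpha\gamma$) converts this to $C' \epsilon^{-\gamma} h^{-\alpha\gamma}$, exactly the bound claimed in the proposition.

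To verify the operator-norm bound on $V$, I would write $S_h = F_1 + F_2$ as in Lemma~\ref{lem:S-structure}. Since $F_1$ has compact microsupport contained in some $\{|\eta| \leq R_0\}$ and $R \to \infty$ as $h \to 0$, semiclassical ellipticity gives $\|F_1 u\|_{L^2} = O(h^\infty)\|u\|_{L^2}$ for $u \in V$. Combined with the unitarity of $S_h$, this yields $\|F_2 u\| = \|u\|(1 + O(h^\infty))$ on $V$, whence
$$
\|(F_2 - \Id)u\|^2 = 2\|u\|^2 - 2\operatorname{Re}\langle F_2 u, u\rangle + O(h^\infty)\|u\|^2.
$$
Evaluating $\langle F_2 u, u\rangle$ via the oscillatory integral \eqref{eq:Sh-at-fiber-infinity}, stationary phase in $(y,y')$ along the diagonal reduces the pairing to an integral of $e^{iG(y,\eta)/h}(1+b)$ against the semiclassical Wigner transform of $u$, which is concentrated in $\{|\eta| \geq R\}$. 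On this region $|G(y,\eta)/h| \leq C R^{1-\alpha}/h \leq Cc\epsilon$, so taking real parts and applying $1 - \cos x \leq x^2/2$ yields $\|(F_2 - \Id) u\|^2 \leq Cc^2 \epsilon^2 \|u\|^2$; choosing $c$ small completes the verification.

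The main obstacle will be making the Wigner/stationary-phase reduction rigorous, since $F_2$ is an FIO rather than a pseudodifferential operator: its canonical transformation shifts coordinates by $\partial_\eta G = O(|\eta|^{-\alpha})$ and the phase $G$ is only a classical symbol, not smooth at infinity. Using Proposition~\ref{thm:sojourn-near-infty}'s symbol bounds on $G$, the symbolic amplitude $b \in S^{-\alpha}$, and either of the equivalent parametrizations of the phase (Lemma~\ref{lem:S-structure} or Remark~\ref{thm:switch-y-yprime}), one must verify that all subleading contributions to $\langle F_2 u, u\rangle$ are dominated by the main $(R^{1-\alpha}/h)^2 \sim \epsilon^2$ term for our choice of $R$. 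Once this operator-norm estimate is established, the min-max argument immediately yields \eqref{eq:away-from-1}.
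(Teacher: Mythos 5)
Your overall strategy mirrors the paper's: localize to frequencies $|\dspharv| \gtrsim (\epsilon h)^{-1/(\alpha-1)}$ via a function of $h^2\Delta_{\mathbb{S}^{d-1}}$, invoke Weyl's law to bound the dimension of the complementary subspace by $c\,\epsilon^{-\gamma}h^{-\alpha\gamma}$, and use min--max (equivalently, the rank-plus-small-norm splitting the paper uses) to convert a smallness estimate on the high-frequency block of $S_h - \Id$ into the eigenvalue count. The reduction to $F_2$ via disjointness of microsupports is also the same.

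Where you diverge is in proving the smallness on the high-frequency subspace, and this is where there is a genuine gap. You propose to evaluate $\langle F_2 u, u\rangle$ by ``stationary phase in $(y,y')$ along the diagonal'' against the Wigner transform of $u$. But $u$ is an arbitrary element of the spectral subspace $V$, not an oscillatory datum, so there is no stationary phase in $(y,y')$ to perform, and the Wigner pairing formula $\langle \Op(a)u,u\rangle \approx \int a\,W_u$ is a pseudodifferential statement, not an FIO one; even for pseudodifferential operators, controlling the error and passing from the pointwise smallness of $\Re\big(e^{iG/h}(1+b)\big)-1$ to a lower bound on $\Re\langle F_2 u,u\rangle$ is a (sharp) G\aa rding-type step, not a direct consequence. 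Making this precise essentially requires a uniform $L^2$-operator-norm bound for an operator whose symbol is $\epsilon$-small after frequency localization, which is exactly what the paper supplies by the change of variables $\wt{\dspharv} = (\epsilon h)^{1/(\alpha-1)}\dspharv$, $\wt{h} = (\epsilon h)^{1/(\alpha-1)}h$, followed by Calderon--Vaillancourt applied to the rescaled amplitude $\wt{a}$. You will be pushed back to something equivalent to that computation; the crux of the lemma is verifying the symbol estimates on $\wt{a}$ after rescaling (as in \eqref{eq:symbol-estimates-2}--\eqref{eq:change-of-variables-symbol-estimate}), and your sketch does not engage with this. The $1-\cos x \le x^2/2$ gain is pleasant but unnecessary; a first-order $|e^{iG/h}-1| \le |G|/h = O(\epsilon)$ bound already suffices once the operator norm control is in hand.

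One further slip to fix: with $R = c(\epsilon h)^{-1/(\alpha-1)}$ and $G = O(|\dspharv|^{1-\alpha})$, you have $|G|/h \lesssim R^{1-\alpha}/h = c^{1-\alpha}\epsilon$, not $c\,\epsilon$; since $\alpha > 1$, $c^{1-\alpha}$ is small precisely when $c$ is \emph{large}, so ``choosing $c$ small'' is backwards. This only rescales the constant in \eqref{eq:away-from-1} (which the proposition permits), but as written the verification would fail.
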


Before we prove the proposition, we remind the reader of the following
fact; let $K_h$ be a semiclassical pseudodifferential operator of
semiclassical order $0$ on a compact manifold $N$ without boundary,
with compact microsupport.  Then there
exists a $c > 0$ such that off a subspace $W_h \subset L^2$ with $\dim
W_h \le c h^{- \dim N}$, we have
\begin{equation}
  \label{eq:pseudo-bound}
  \| K_h \|_{W_h^\perp \to L^2} = O(h^\infty).
\end{equation}
Indeed, this can be shown using properties of the semiclassical
Laplacian $h^2 \Delta_N$ corresponding to a Riemannian metric on $N$
as follows.
If one considers a smooth, compactly supported function $\chi
\colon T^* N \lra \mathbb{R}$, with $\WF_{h}(K_h) \subset \supp \chi$, then
writing
$$
K_h = K_h \chi(h^2 \Delta) + K_h (\Id - \chi(h^2 \Delta)),
$$
the term on the left hand side satisfies $\| K_h \chi(h^2 \Delta)
\| \le \| K_h  \| \|\chi(h^2 \Delta)\|$ where the norms are operator
norms as maps on $L^2$.  (see \cite{Zworski-book} for a definiton of
the semiclassical wavefront set $WF_h$ of a semiclassical FIO.) 
By the Weyl
asymptotic formula for semiclassical pseudodifferential operators of
order $2$ \cite[Section 6.4]{Zworski-book}, which states that, if $\{\lambda^2_j \}_{j
  = 1}^\infty$  are
the eigenvalues of $\Delta$ 
\begin{equation}\label{eq:Weyl-asymptotics}
\# \{ j : \lambda_j^2 < \lambda^2 \}  = c_N \lambda^{\dim N} +
O(\lambda^{dim N - 1}),
\end{equation}
as $\lambda \to \infty$, we see that
$\chi(h^2 \Delta)$ is identically zero off a
set of dimension no bigger than $c \Vol(\supp (\chi)) h^{-\dim N}$.
For the term on the left, $\Id - \chi(h^2 \Delta)$ is a semiclassical
pseudo of order $0$, with microsupport in $(\supp \chi)^{c}$, and thus
$$
\WF_{h} (K_h (\Id - \chi(h^2 \Delta))) \subset \WF_{h}(K_h) \cap
(\supp \chi)^c = \varnothing,
$$
and in particular $\| K_h(\Id - \chi(h^2 \Delta)) \| = O(h^\infty)$.

In the proof we will use a semiclassical version of the
Calderon-Vaillancourt theorem, which, in the non-semiclassical setting
\cite[Theroem
2.73]{Folland-Harm-Anal-in-Phase} states that a
pseudodifferential 
operator $Q$ on $\mathbb{R}^n$,
$$
Q = \int e^{(z - z') \cdot \zeta} a(z, z', \zeta) d\zeta,
$$
satisfying
$$
\| a \|_{2n + 1} := \sup_{|\alpha| + |\beta| \le 2n + 1}  \|
\p^\alpha_{z, z'} \p^\beta_\zeta a \|_{L^\infty} < \infty
$$ 
is bounded on $L^2$ with
$  \| Q \|_{L^2 \to L^2} \le c_n \| a \|_{2n + 1}$,
where $c_n$ is a constant depending only on the dimension $n$.
Setting $\zeta = \wt{\zeta} /h$ shows that for a semiclassical
pseudodifferential operator of order $0$,
$$
Q_h = h^{-n}\int e^{(z - z') \cdot \zeta / h} b(z, z', \zeta, h) d\zeta,
$$
if
\begin{equation}\label{eq:CV-norm}
\| b \|_{2n + 1, h} := \sup_{|\alpha| + |\beta| \le 2n + 1}  \|
\p^\alpha_{z, z'} (h\p_\zeta)^\beta b \|_{L^\infty} < \infty
\end{equation}
is bounded on $L^2$ with norm bounded by $c_n \| b \|_{2n + 1, h}$.
Indeed, this is just Courant-Vaillancourt with symbol depending on a
smooth parameter applied to a semiclassical
symbol $b(z, z', \zeta, h)$ 

\begin{proof}[Proof of Proposition \ref{thm:rough-eigenvalue-asymptotics}]
  Consider the operator $S_h = F_1 + F_2$ decomposed as in
  \eqref{eq:Sh-decomposition}, where $F_1$ has compact microsupport
  and $F_2$ consists of a finite sum of terms of the form
  \eqref{eq:Sh-at-fiber-infinity}.  We begin by taking a cutoff
  function $\chi_R \colon T^*\mathbb{S}^{d -1 } \lra \mathbb{R}$
  with $\chi_R(\dspharv) \equiv 1 $ for $|\dspharv| \le R/2$ and $\supp \chi
  \subset \{ |\dspharv | \le R \}$.  Taking $R$ sufficiently large we have
$$
S_h - \Id  = (S_h - \Id) \Op_h(\chi_R) + (S_h - \Id) (\Id - \Op_h(\chi_R)),
$$
where $\Op_h(q)$ applied to a symbol $q \in S^m(\mathbb{S}^{d -1})$
denotes the right quantization of $q$ to a semiclassical
pseudodifferential operator of order $m$, again see
\cite{Zworski-book}.  Here,
the operator $(S_h - \Id) \Op_h(\chi) \in \Pscl^0$, and thus the statments
preceeding the proof apply, and since $\Op_h(\chi_R)$ is a semiclassical
pseudodifferential operator of order $0$ with compact microsupport
\cite[Appendix]{Gell-Redman-Hassell-Zelditch},  again using $d -1 <
\alpha \gamma$, we see by the discussion
proceeding the proof that the behavior of $(S_h - \Id) \Op_h(\chi)$
has no bearing on \eqref{eq:away-from-1}.
  On the other hand, we can take $R$ large
enough so that $\WF_{h} (F_1 \Op_h(\chi_R)) = \varnothing$ and thus
the Schwartz kernel of the operator
$$
A_h = (S_h
- \Id) (\Id - \Op_h(\chi_R)).
$$ 
is a sum of terms of the form \eqref{eq:Sh-at-fiber-infinity} plus
terms of order $O(h^\infty)$, and we thus focus our attention on terms
as in \eqref{eq:Sh-at-fiber-infinity}.

By Remark \ref{thm:switch-y-yprime}, we may take $F_2$ to be as in
\eqref{eq:Sh-at-fiber-infinity-y-dependent} with phase function $\Phi = (\sphvar - \sphvar') \cdot \dspharv
+ G(\sphvar, \dspharv)$ which appear in $F_2$ have the asymptotics $G =
a(\sphvar, \wt{\dspharv})|\dspharv|^{1 - \alpha} + O(|\dspharv|^{-\alpha})$.  For constant $\delta
> 0$, we consider two
asymptotic regimes
\begin{equation}
  \label{eq:regimes}
  \begin{split}
    \mbox{regime I:}\quad |\dspharv| h^{1/(1 - \alpha)} \ge \delta &\quad \mbox{ here }
    \exp{iG(\sphvar, \dspharv)/ h} \mbox{ is \textit{not} oscillatory} \\
    \mbox{regime II:} \quad |\dspharv| h^{1/(1 - \alpha)} < \delta &\quad \mbox{ here }
   \exp{ i G(\sphvar, \dspharv)/ h} \mbox{ is  oscillatory.} 
  \end{split}
\end{equation}
As in the discussion proceeding the proof, we use functions of the
semiclassical Laplacian 
$$
P_h := h^2 \Delta_{\mathbb{S}^{d -1}}.
$$
Let $\chi\colon \mathbb{R}^+ \lra \mathbb{R}$ be a bump function with
$\chi(r) = 1$ for $r \le 1/2$, $\chi \ge 0$, and $\supp \chi \subset
[0, 1]$, and write 
\begin{equation}
  \label{eq:Ah-bustup}
  A_h = A_{h, 1} + A_{h, 2},
\end{equation}
where
$$ A_{h, 1} := A_h \chi((\epsilon
h)^{1/(\alpha - 1)} P_h) , \qquad 
A_{h, 2} := + A_h (\Id - \chi((\epsilon
h)^{1/(\alpha - 1)} P_h)).
$$
 We analyze these two operators
separately.

For $A_{h, 1}$, we begin by
pointing out that there exist subspaces $V_h\subset L^2$ with $\dim V
< \epsilon^{- \gamma} h^{-\alpha \gamma}$ such that  
$(\chi((\epsilon h)^{2/(\alpha -  1)} P_h) \rvert_{V_h^\perp} \equiv
0$.  Indeed, by the Weyl asymptotic formula
\eqref{eq:Weyl-asymptotics}, the direct sum of the 
eigenspaces of $(\epsilon h)^{2/(\alpha -  1)} P_h$ with eigenvalue
less than $1$, which we take to be $V_h$, satisfies
\begin{equation}\label{eq:count-of-big-phase-shifts}
\dim V_h \le c ((\epsilon h)^{- 1/(\alpha - 1)} h^{-1})^{-(d - 1)} = c
  \epsilon^{-\gamma} h^{-\alpha \gamma}.
\end{equation}
Thus $A_{h, 1}$ is also identically
zero off $V_h$.

Now consider $A_{h, 2} := A_h (\Id - \chi((\epsilon h)^{1/(\alpha - 1)}
P_h))$.  We claim that there exists a constant $C > 0$ independent of
$h$ and $\epsilon$ such that
\begin{equation}\label{eq:epsilon-norm-bound}
\| A_{h, 2} \| \le C \epsilon.
\end{equation}
The Schwartz kernel of the operator $\chi((\epsilon
h)^{1/(\alpha - 1)} P_h)$ is given by finite sums of terms
\begin{equation}\label{eq:microlocal-function-of-Lapl}
(2 \pi h)^{-(d -1)} \int e^{i (\sphvar - \sphvar')\cdot \dspharv/h }\wt{\chi}(\sphvar, \sphvar', \dspharv (\epsilon h)^{1/(\alpha - 1)}, h)   d\dspharv,
\end{equation}
where $\wt{\chi}(\sphvar, \sphvar', \wt{\dspharv}, h)$ is a semiclassical symbol of
order zero with $\wt{\chi} \rvert_{|\wt{\dspharv}| \ge 1} = O(h^\infty)$.
The operator $A_{h, 2}$ is given by terms of the form
\begin{equation}
  \label{eq:non-oscillatory}
  \begin{split}
   & (2\pi h)^{- (d - 1)}\int e^{i (\sphvar - \sphvar')\cdot \dspharv/h + iG(\sphvar, \dspharv)/h} a(\sphvar, \sphvar', \dspharv, h)
    (1 - \wt{\chi}(\sphvar, \sphvar', \dspharv (\epsilon h)^{1/(\alpha - 1)}, h) )
    d\dspharv \\
 & \qquad - (2\pi h)^{- (d - 1)} \int e^{i (\sphvar - \sphvar')\cdot \dspharv/h }(1 - \wt{\chi}(\sphvar, \sphvar', \dspharv
    (\epsilon h)^{1/(\alpha - 1)}, h)) d\dspharv.
  \end{split}
\end{equation}
This requires some explanation.  To compute the composition we must
compose an operator whose Schwartz kernel is an oscillatory integral
as in \eqref{eq:Sh-at-fiber-infinity-y-dependent}, call it $I_h(y, y')$ with an operator
whose Schwartz kernel is an oscillatory integral of the form
\eqref{eq:microlocal-function-of-Lapl}.  This is done by arguing along
the lines in Appendix \ref{sec:composition}, where in particular we
see that the composition of two such oscillatory integrals is given by
$\int I_h(y, y'') \wt{I}_h(y'', y') |dy''|$.  The situation here is substantially simpler
since the operator on the right is a semiclassical pseudo, and the
expression above is obtained easily from stationary phase. (Again, see Appendix
\ref{sec:composition}.)

Setting $\wt{\dspharv} = \dspharv  (\epsilon h)^{1/(\alpha - 1)}$, $\wt{h} = (\epsilon h)^{1/(\alpha - 1)}
  h$ and
factoring gives 
\begin{equation}
  \label{eq:change-of-variables}
   (2\pi \wt{h})^{- (d - 1)} \int e^{i (\sphvar -
     \sphvar')\cdot \wt{\dspharv} / \wt{h} } \wt{a}(\sphvar, \sphvar', \wt{\dspharv}, \wt{h})    d\wt{\dspharv},
\end{equation}
where, letting $h$ remain as a function of $\wt{h}$ and $\epsilon$ for the moment,
\begin{equation*}
  \begin{split}
    \wt{a}(\sphvar, \sphvar', \wt{\dspharv}, \wt{h}) &= \lp \exp( iG(\sphvar, \wt{\dspharv} (\epsilon
    h)^{-1/(\alpha - 1)})/h) b(\sphvar, \sphvar', \wt{\dspharv} (\epsilon
    h)^{-1/(\alpha - 1)}, h) - 1 \rp \\
    & \qquad \times (1 - \wt{\chi}(\sphvar, \sphvar', \wt{\dspharv}, h)
    ).
  \end{split}
  \end{equation*}
The immediate effect of this change of variables is that
\begin{equation} \label{eq:F}
  \begin{split}
    F &= \exp( iG(\sphvar, \wt{\dspharv} (\epsilon h)^{-1/(\alpha - 1)})/h)
    \times (1 - \wt{\chi}(\sphvar, \sphvar', \wt{\dspharv}, h)) 
  \end{split}
\end{equation}
satisfies $F  - (1 -
\wt{\chi}) \in \epsilon S^{1 -
  \alpha}$, i.e.\ it is a semiclassical symbol of order $1 - \alpha$
as a function of $\wt{\dspharv}$ and $\wt{h}$ (or $\wt{\dspharv}$ and $h$ for
that matter.)
Indeed, recalling that support of $1 - \wt{\chi}$ is contained in $\{
\wt{\dspharv} \ge 1 \}$ and thus $|\wt{\dspharv}|^{-\delta}( 1 -\wt{\chi})$ is
bounded below for any $\delta > 0$, we claim first that, notation as
in \eqref{eq:G-decomposition},
$$
(1 - \wt{\chi})\wt{G}/ h = (1 - \wt{\chi}) (g(\sphvar, \wt{\dspharv})
\epsilon |\wt{\dspharv}|^{1 - \alpha} + \wt{g}(\sphvar, \sphvar', \wt{\dspharv}(\epsilon h)^{-1/(\alpha -
  1)})/h)
$$   
satisfies that $(1 - \wt{\chi})\wt{g} /h \in \epsilon S^{1 - \alpha -
  \epsilon}$ as a function of $\wt{\dspharv}$.  Indeed,
\begin{equation}
  \label{eq:symbol-estimates-2}
  \begin{split}
|    \p_{\sphvar, \sphvar'}^\alpha \p_{\wt{\dspharv}}^\beta (1 - \wt{\chi})\wt{g} /h |
\le c h^{-1}   | \sum_{\alpha' \le \alpha, \beta' \le \beta} \p_{\sphvar,
  \sphvar'}^{\alpha - \alpha'} \p_{\wt{\dspharv}}^{\beta - \beta'}(1 -
\wt{\chi}) \p_{\sphvar,
  \sphvar'}^{\alpha } \p_{\wt{\dspharv}}^{\beta}\wt{g} |,
  \end{split}
\end{equation}
and while for $\beta' \neq 0, \p_{\wt{\dspharv}}^{\beta - \beta'} (1 -
\wt{\chi})$ is compactly supported in $\wt{\chi}$, the symbol
estimates for $\wt{g}$ give that for $\beta \neq 0$
\begin{equation}\label{eq:change-of-variables-symbol-estimate}
  \begin{split}
    |(1 - \wt{\chi}) \p_{\wt{\dspharv}}^{\beta}\wt{g}| &\le (1 -
    \wt{\chi}) (\epsilon
    h)^{1/(1 - \alpha)} \la \wt{\dspharv} (\epsilon h)^{-1/(\alpha - 1)}
    \ra^{1 - \alpha - \epsilon - |\beta|} \\
    &\le (1 - \wt{\chi}) (\epsilon
    h)^{1/(1 - \alpha)  -(1 - \alpha - \epsilon - |\beta|) /(\alpha - 1)}\lp |\wt{\dspharv} 
    |^2 + (\epsilon h)^{2/(\alpha - 1)} \rp^{(1 - \alpha - \epsilon -
      |\beta|) / 2}\\
    &\le (\epsilon
    h) \la \wt{\dspharv} 
    \ra^{1 - \alpha - \epsilon - |\beta|},
  \end{split}
\end{equation}
where in the last line we used that $1 - \wt{\chi}$ is supported in
$|\wt{\dspharv}| \ge 1$, while for $\beta = 0$
firstly that
$$
|(1 - \wt{\chi}) \wt{g}(\sphvar, \sphvar', (\epsilon h)^{-1/(\alpha -
  1)}/h| \le C (1 - \wt{\chi}) \la \wt{\dspharv} (\epsilon h)^{-1/(\alpha -
  1)} \ra^{1 - \alpha - \epsilon} < C \epsilon.
$$
 The estimates in \eqref{eq:symbol-estimates-2} and
\eqref{eq:change-of-variables-symbol-estimate} together show $(1 -
\wt{\chi})\wt{G}/ h$ and thus $F - (1 - \wt{\chi})
\in \epsilon S^{1 - \alpha}$.
Moreover, 
$$
a(\sphvar, \sphvar', \wt{\dspharv} (\epsilon h)^{-1/(\alpha -
  1)}, h) = 1 + b(\sphvar,
\sphvar', \wt{\dspharv} (\epsilon h)^{-1/(\alpha - 1)}, h)
$$
remains a symbol of order $0$ and $b$ a symbol of order
$1-\alpha$, and as in the case of $F$, $b(\sphvar,
\sphvar', \wt{\dspharv} (\epsilon h)^{-1/(\alpha - 1)}, h) \in \epsilon S^{1 -
  \alpha}$ in $\wt{\dspharv}$.
Thus
$$
\wt{a} = (F - (1 - \wt{\chi})) + F b
$$
is $\epsilon$ times a semiclassical symbol of order $1 - \alpha$ whose
derivatives in $\sphvar, \sphvar'$ and $\wt{\dspharv}$ are uniformly bounded.  In
particular Calderon-Vaillancourt (see \eqref{eq:CV-norm} and below) gives
\eqref{eq:epsilon-norm-bound}.

To finish the proof, given $\epsilon$ we take $\epsilon' = \epsilon /
C$ with $C$ in \eqref{eq:epsilon-norm-bound} and use $\epsilon'$ in
the arguments above to see that in $A_h = A_{h, 1} + A_{h, 2}$ (see
\eqref{eq:Ah-bustup}), $A_{h, 1}$ has at most $c \epsilon^{- \gamma}
h^{- \alpha \gamma}$ eigenvalues at distance $\epsilon$ from $1$ while
$A_{h, 2}$ is norm bounded by $\epsilon$.  This is exactly the desired result.
\end{proof}

\subsection{Proof of Propositions~\ref{thm:trace-class} and \ref{cor:traceclass}}
 \begin{proof}[Proof of Proposition~\ref{thm:trace-class}]  Let $p \in \mathbb{N}$.  Let 
$$
A_{h}(p)  =  \{ e^{2i \beta_{h, n}} \in \spec S_h : 2^{-(p -1)} \ge
|e^{2 i
     \beta_{h, n}} - 1|  > 2^{-p} \},
$$
where elements are included with multiplicity. Then taking $\epsilon =
2^{-p}$ in \eqref{eq:away-from-1} gives
\begin{equation}
  \label{eq:4}
  |A_{h}(p) | \le c 2^{p(d - 1)/(\alpha - 1)} h^{- {\alpha/(\alpha - 1)}}.
\end{equation}
The pairing of $f$ with $\mu_h$ is given by
  \begin{equation}
    \label{eq:2}
    \begin{split}
       \la \mu_h , f \ra &= h^{{\alpha/(\alpha - 1)}} \sum_{\spec S_h} f(e^{2i \beta_{h, n}}) 
= h^{{\alpha/(\alpha - 1)}} \sum_{p = 0}^{\infty} ( \sum_{A_{h}(p)} f(e^{2i \beta_{h, n}}) ) .
      \end{split}
  \end{equation}
But,
\begin{equation}
  \label{eq:3}
  \begin{split}
   |  \sum_{A_h(p)} f(e^{2i \beta_{h, n}}) | &\le 
   \norm[w]{f}  \sum_{A_h(p)} | e^{2i \beta_{h, n}} - 1| \\
&\le \norm[w]{f} 2^{-p} |A_h(p) | \\
&\le c  \norm[w]{f} 2^{-p} (2^{p(d - 1)/(\alpha - 1)} h^{- {\alpha/(\alpha - 1)}}) \\
&\le c   \norm[w]{f} h^{- {\alpha/(\alpha - 1)}}  2^{-p + p (d - 1)/(\alpha - 1)},
  \end{split}
\end{equation}
Thus
\begin{equation}
  \label{eq:5}
  \la \mu_h , f \ra \le c   h^{\alpha/(\alpha - 1)} \norm[w]{f} h^{- {\alpha/(\alpha - 1)}}  \sum_{p =
    0}^\infty 2^{p((d - 1)/(\alpha - 1) -  1) } \le  c   \norm[w]{f} \sum_{p =
    0}^\infty 2^{p( (d - 1)/(\alpha - 1) -  1) },
\end{equation}
and the above is summable if and only if 
$$
1 > \frac{d - 1}{\alpha - 1} \iff \alpha > d,
$$
which is exactly our assumption on $\alpha$.
\end{proof}

\begin{proof}[Proof of Proposition~\ref{cor:traceclass}] It is enough to prove for $k=1$, as for any other value of $k$, we can write $S_h^k - \Id$ as the product of $S_h - \Id$ with a bounded operator. For $k=1$, Proposition~\ref{thm:rough-eigenvalue-asymptotics} shows that the number of eigenvalues $z$ (counted with multiplicity) of $S_h$ such that $|z-1| \in [2^{-j}, 2^{-j+1}]$ is bounded by $C h^{-\alpha \gamma} 2^{j\gamma}$. Since $\gamma < 1$, we can sum $2^{-j+1} \times C h^{-\alpha \gamma} 2^{j\gamma}$ over $j \in \NN$, which is a bound for the sum of $|z-1|$ over all eigenvalues $z$. It follows that the trace norm of $S_h - \Id$ is finite. 
\end{proof}


\section{Trapping energies}\label{sec:trapping}
Suppose now that $E$ is a trapping energy for the potential $V$. In this case, we write the scattering matrix $S_h(E)$ as the scattering matrix $\tilde S_h(E)$ for a different potential $\tV$, which is nontrapping at energy $E$, plus a small remainder.  We can choose the potential $\tV$ to be equal to $V$ near infinity. To do this, we first choose a function $\phi \in C_c^\infty(\RR_+)$, equal to $1$ in a neighbourhood of $0$, and monotone nonincreasing. Then $\tV := V + 2E \phi(|x|/R)$ will be nontrapping at energy $E$, for sufficiently large $R$. 

We then express the scattering matrix $S_h(E)$ in terms of $\tilde S_h(E)$. 
To do this, we follow \cite[Section 8B]{GHS}. Let $R_h = (h^2\Delta +
V - (E+i0))^{-1}$ and $\tilde R_h = (h^2\Delta + \tV - (E+i0))^{-1}$
be the outgoing resolvents for the unperturbed and perturbed
potential, respectively. Also, let $\chi_i$, $i = 1, 2, 3$ be cutoff
functions supported near infinity in $\RR^n$, equal to $1$ for $|x|
\geq 2R$ and $0$ for $|x| \leq R$, such that $\chi_i \chi_j = \chi_j$
when $j < i$. Then, following the derivation of \cite[Equation
(8-7)]{GHS}, i.e.\ taking $\textsc{H} = \Delta + V$ and $\lambda =
E/h^2$ in that equation, we obtain 
\begin{equation}
\chi_2 R_h \chi_1 = \chi_2 \tilde R_h \chi_1 + \chi_2 \tilde R_h [\chi_3, h^2 \Delta + V] R_h [h^2 \Delta + V, \chi_2] \tilde R_h \chi_1.
\label{res-identity}\end{equation}
These Schwartz kernels are defined on $\RR^d_x \times \RR^d_{x'} \times (0, h_0]_h$. 
As discussed in \cite{HW2008}, if we use polar coordinates $x = (r,
\omega)$, $x' = (r', \omega')$, multiply the kernel of $R_h$ by
${r'}^{(d-1)/2}$ and take the limit $r' \to \infty$, we obtain the
Poisson kernel $P_h(E)$, which is a function of $(x, \omega', h)$. If
we then multiply the Poisson kernel $P_h(E)$ by $r^{(d-1)/2}$ and take
the (distributional) limit $r \to \infty$, we obtain the kernel of the
absolute scattering matrix; multiplying by $i^{(d-1)/2}$ and composing
with the antipodal map $A$, we obtain the scattering matrix $S_h(E)$
as we have normalized it.  The same operations applied to $\tilde R_h$
produce $\tilde P_h(E)$ and $\tilde S_h(E)$. Applying these operations
to \eqref{res-identity}, we obtain 
\begin{equation}
S_h(E)  = \tilde S_h(E) + i^{(d-1)/2} A \tilde P^*_h(E) [\chi_3, h^2 \Delta + V] R_h [h^2 \Delta + V, \chi_2] \tilde P_h(E) .
\label{sm-identity}\end{equation}
For brevity, we write this in the form 
\begin{equation}
S_h(E)  = \tilde S_h(E) + B_h(E) ; 
\end{equation}
clearly $B_h(E)$ is a uniformly bounded family of operators on $L^2(\mathbb{S}^{d-1})$. 

Our previous arguments apply to $\tilde S_h(E)$, since $E$ is a nontrapping energy for $\tV$. So it suffices to show that the perturbation $B_h(E)$ has no effect on the weak-$*$ limit $\tilde \mu$ of the measures $\tilde \mu_h$ associated to $\tilde S_h(E)$, as $h \to 0$. 

To show this, we now cut off to small and large frequencies using a cutoff $\chi(h^2 \Delta_{\mathbb{S}^{d-1}})$, where $\chi(t)$ is compactly supported, and identically $1$ near $t=0$. For simplicity we write this operator simply as $\chi$. Thus we write
\begin{equation}
S_h(E) = \chi \tilde S_h(E) + \chi B_h(E) + (\Id - \chi) \tilde S_h(E) + (\Id - \chi) B_h(E). 
\label{S-decomp}\end{equation}
The first term is an FIO with compact microsupport, hence has trace norm bounded by $C h^{-(d-1)}$. The second term also has trace norm bounded by $C h^{-(d-1)}$, since this is true of $\chi$ which is also an FIO with compact microsupport. The third term is the principal term,  and the fourth we bound using wavefront set results. 
In fact, according to \cite{HW2008}, the semiclassical wavefront set of $\tilde P^*_h(E)$ is contained in 
$$
\{ (\omega, \eta; x, \xi) \mid \text{ the bicharacteristic through } (x, \xi) \text{ has asymptotic } t \mapsto \eta + \omega (t - t_0), t \to \infty \}
$$
when the point $x$ is restricted to a fixed compact set. Now consider the composition $(\Id - \chi) A \tilde P^*_h(E) [\chi_3, h^2 \Delta + V]$. Composition on the right with $[\chi_3, h^2 \Delta + V]$ restricts the wavefront set to points $x \in \supp \nabla \chi_3$, that is, to $x$ lying in some fixed compact set in $\RR^d$. On the other hand, composition on the left with $(\Id - \chi)$ restricts the wavefront set to points $(\omega, \eta)$ in the support of the symbol of $\chi$. 
By choosing $\chi$ suitably, we can arrange that this support is contained in $|\eta| \geq R'$ for $R'$ arbitrary. By choosing $R'$ sufficiently large, we arrange that the wavefront set of  $(\Id - \chi) A \tilde P^*_h(E) [\chi_3, h^2 \Delta + V]$ vanishes. That implies that the Schwartz kernel of this operator is smooth and $O(h^\infty)$. The trace norm of the fourth term in \eqref{S-decomp} is therefore $O(h^\infty)$. 

Now consider all the terms in $S_h(E)^k - \Id$, where $S_h(E)$ is decomposed according to \eqref{S-decomp}. The main term, $\big( (\Id - \chi) \tilde S_h(E) \big)^k - \Id$, is treated as in Sections~\ref{sec:the-scattering-matrix} -- \ref{sec:eigenvalue-distribution}. All other terms have trace norm bounded by $O(h^{-(d-1)})$, and therefore their contribution to 
$h^{\gamma \alpha} \Trace (S_h(E)^k - \Id)$ vanishes in the limit $h \to 0$. 
This completes the proof of the Main Theorem in the case of a trapping energy.


\begin{appendix}

\section{Regularity of the sojourn map}\label{sec:soujourn}

In this appendix we prove Proposition~\ref{thm:sojourn-near-infty} and
Lemma~\ref{lem:S-structure}. Our first task is to determine the
regularity of the Legendre submanifold $L$ \eqref{eq:total-sojourn-relation} as $|\eta| \to
\infty$. To do this, we use the fact that $L$ is the boundary value of
a Legendre submanifold $\SR$ over a space of dimension one greater which is a
bicharacteristic flowout, that is, the union of bicharacteristic
rays. We start by defining some spaces of conormal functions, and then
proceed to describe $\SR$ and its ambient contact manifold.

This process will use the language, developed by Melrose \cite{damwc, tapsit}, of analysis on manifolds with corners.  Though some of
this is quite involved we will provide some brief explanations and
definitions for the
convenience of the reader.

\subsection{Conormal regularity of solutions to ODEs}\label{subsec:conormal}

Let $M$ be a manifold with corners, with boundary hypersurfaces
$H_1,\dots, H_m$ and boundary defining functions $\rho_1, \dots,
\rho_m$ respectively \cite{damwc}.  Thus the boundary $\p M$ is equal
to the union of the $H_i$, and for each $i$, $\rho_i$ is a
non-negative, smooth function on $M$ with $H_i = \{ \rho_i = 0\}$ and $d \rho_i \neq 0$ on $H_i$. Let $\rho = \rho_1 \dots, \rho_m$
be the product
of boundary defining functions.  We say that a vector field
$\mathcal{V}$ on $M$ is a b-vector field if it is smooth, and tangent
to each boundary hypersurface $H_i$, or equivalently if
$\mathcal{V}(\rho_i) = O(\rho_i)$ for each $i$.  

Let $\underline{\epsilon} = (\epsilon_1, \dots, \epsilon_m) \in \mathbb{R}^m$ be a multiweight, one for each boundary hypersurface of $M$. The ($L^\infty$-based) space of conormal functions with weight $\underline{\epsilon}$, $\mathcal{A}^{\underline{\epsilon}}(M)$, is defined as follows:
\begin{equation}
\mathcal{A}^{\underline{\epsilon}}(M) = \left\{ f \in
  C^\infty(M^\circ) \mid 
  \begin{array}{c}
\rho^{-\underline{\epsilon}} f \in L^\infty(M) \mbox{ and } \rho^{-\underline{\epsilon}} \mathcal{V}_1
  \dots \mathcal{V}_k f \in L^\infty(M) \\
 \text{ for any $k$ b-vector
    fields } \mathcal{V}_1, \dots, \mathcal{V}_k     
  \end{array}
\right\}.
\end{equation}
Here $\rho^{\underline{\epsilon}}$ is shorthand notation for the product $\rho_1^{\epsilon_1} \dots \rho_m^{\epsilon_m}$. 
That is, $f \in \rho^{\underline{\epsilon}}L^\infty(M)$, and remains
in this space under repeated differentiations by b-vector fields on
$M$.  The space $\mathcal{A}^{\underline{\epsilon}}$ is a Frechet
space whose metric we describe below for a simple example.

We also use the notation 
$$C^{\infty, \underline{\epsilon}}(M) = C^\infty(M) +
\mathcal{A}^{\underline{\epsilon}}(M).
$$ 
The regularity condition of our potential $V$ in the Main
Theorem can be phrased in terms of the above spaces; the assumption on $V$ can be expressed in terms of the
radial compactification $\overline{\RR^d}$ of $\RR^d$, where $1/r$ is
taken as the boundary defining function at the `sphere at
infinity,' and is equivalent to assuming that for some $0 < \epsilon < 1$, $r^\alpha V \in C^{\infty,
  \epsilon}(\overline{\RR^d})$.  (Equivalently, $V \in r^{-\alpha}C^{\infty,
  \epsilon}(\overline{\RR^d})$.) We abuse notation slightly by
defining a smooth map $u = (u_1, \dots,
u_n)$  on $M^\circ$ with values in $\mathbb{C}^n$ to lie in
$\mathcal{A}^{\underline{\epsilon}}(M)$ if and only if its components
do. 

We note, for later use, the following result. The proof is straightforward and omitted. 

\begin{lemma}\label{lem:inverses} \

(i) If $f \in \Cinfep(M)$ is bounded away from zero, then $1/f \in \Cinfep(M)$.

(ii) If $S: M \to N$ is a b-map\footnote{This means that the inverse
  image of every boundary defining function on $N$ is a product of
  boundary defining functions on $M$, times a smooth non-vanishing function. An invertible b-map induces, in particular, a bijection between the codimension $k$-faces of $M$ and the codimension $k$-faces of $N$.} between manifolds with corners $M$ and $N$ such that all components of $S$ have regularity $\Cinfep(M)$, and $S$ is invertible in the sense that it is invertible as a map and its Jacobian determinant is bounded away from zero, then the inverse map has regularity $\Cinfep(N)$. 

(iii) Let $\gamma_1, \dots, \gamma_m$ be positive exponents, and suppose that $\epsilon > 0$ is sufficiently small (relative to the $\gamma_i$). Then the statements (i) and (ii) above also hold if the space $\Cinfep$ is replaced by $C^\infty + \Pi \rho_i^{\gamma_i} \Cinfep$. 
\end{lemma}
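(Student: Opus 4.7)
The plan is to prove the three parts in sequence, with part (i) providing the main algebraic tool for parts (ii) and (iii). For part (i), I will first split $f = f_0 + f_1$ with $f_0 \in C^\infty(M)$ and $f_1 \in \mathcal{A}^{\underline{\epsilon}}(M)$. Since all weights $\epsilon_i > 0$ force $f_1 \to 0$ at $\partial M$, the lower bound $|f| \geq c > 0$ transfers to $f_0$ in a neighborhood of $\partial M$, and I can modify $f_0$ in the interior (by a smooth convex combination with a constant of the correct sign on each connected component of $M$) to obtain $\tilde f_0 \in C^\infty(M)$ bounded away from zero globally, with $\tilde f_0 = f_0$ near $\partial M$. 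Setting $g = (f - \tilde f_0)/\tilde f_0 \in \mathcal{A}^{\underline{\epsilon}}$, the identity
\begin{equation*}
\frac{1}{f} = \frac{1}{\tilde f_0} - \frac{g}{\tilde f_0 (1+g)}
\end{equation*}
reduces the claim to showing that $1/(1+g) = \tilde f_0/f$ lies in $\mathcal{A}^0(M)$, which I will verify by induction on the number of b-vector field derivatives via the Fa\`a di Bruno formula: every iterated b-derivative of $1/f$ is a polynomial in $f$ and its b-derivatives divided by a power of $f$, and is therefore bounded since $f$ is bounded away from zero and lies in $\Cinfep \subset \mathcal{A}^0$.

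For part (ii), I work locally near a boundary point $p \in M$. The b-map condition combined with the invertibility of the Jacobian forces, after choosing local b-coordinates $(\rho_1, \dots, \rho_k, y_1, \dots, y_l)$ on $M$ near $p$ and $(\rho'_1, \dots, \rho'_k, y'_1, \dots, y'_l)$ on $N$ near $S(p)$, the schematic form
\begin{equation*}
\rho'_j = \rho_{\sigma(j)} u_j(\rho, y), \qquad y'_m = T_m(\rho, y),
\end{equation*}
for some permutation $\sigma$, with $u_j \in \Cinfep(M)$ bounded away from zero and $T_m \in \Cinfep(M)$ whose tangential Jacobian $\partial T/\partial y$ is invertible. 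Applying part (i) gives $1/u_j \in \Cinfep$, so the radial equations can be solved by the fixed-point iteration $\rho_{\sigma(j)} \leftarrow \rho'_j/u_j(\rho, y)$, while the tangential equations $T(\rho, y) = y'$ are inverted by a parameter-dependent (in the $\rho$-variables) implicit function theorem in $\Cinfep$. The closure of $\Cinfep$ under products, under composition with smooth maps, and under inversion of elements bounded from zero ensures each iterate stays in $\Cinfep$; a Fr\'echet-topology contraction argument then yields $S^{-1}$ with components in $\Cinfep(N)$.

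For part (iii), I first check that $C^\infty + \prod_i \rho_i^{\gamma_i} \Cinfep(M)$ is a subalgebra provided $\epsilon_i < \gamma_i$ for every $i$: the only potentially troublesome cross term in a product has the form $\prod_i \rho_i^{2\gamma_i} b_1 b_2 = \prod_i \rho_i^{\gamma_i} \cdot \prod_i \rho_i^{\gamma_i} b_1 b_2$, and under the hypothesis the second factor lies in $\mathcal{A}^{\underline{\gamma}} \subset \mathcal{A}^{\underline{\epsilon}} \subset \Cinfep$. With this closure property in hand, the arguments of parts (i) and (ii) go through verbatim in the smaller space. The hardest step I expect will be the fixed-point construction in part (ii): I must verify not just convergence in $L^\infty$, but uniform convergence in every b-derivative seminorm, which requires careful bookkeeping of how b-vector field derivatives distribute through Fa\`a di Bruno under composition and inversion, and of how the conormal decay weights at each boundary face are preserved through the iteration.
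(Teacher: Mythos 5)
The paper declares the proof of Lemma~\ref{lem:inverses} ``straightforward and omitted,'' so there is no paper argument to compare against; what follows is a review of your proposal on its own terms. Your strategy is a reasonable and correct elaboration. Part (i) is fine: decompose $f=f_0+f_1$, modify $f_0$ in the interior to a globally non-vanishing $\tilde f_0$, then use the identity $1/f = 1/\tilde f_0 - g/(\tilde f_0(1+g))$ with $g=(f-\tilde f_0)/\tilde f_0 \in \mathcal{A}^{\underline\epsilon}$, the Fa\`a di Bruno bound $1/(1+g)\in\mathcal{A}^0$, and the module property $\mathcal{A}^{\underline\epsilon}\cdot\mathcal{A}^0\subset\mathcal{A}^{\underline\epsilon}$. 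Part (ii) correctly identifies that the b-map hypothesis together with the nondegenerate Jacobian forces the exponent matrix on boundary defining functions to be a permutation (a higher power would kill the Jacobian at the boundary), and the resulting triangular structure of the local normal form is the right thing to exploit.

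Two places deserve tightening. First, in part (ii) the proposed fixed-point iteration must be justified in a Fr\'echet space of conormal functions; a direct contraction in all seminorms at once is not automatic. The standard remedy, which you gesture at, is two-stage: establish $L^\infty$ contraction of the iterates, then show uniform bounds on each b-derivative by differentiating the fixed-point equation and invoking the $L^\infty$ bound inductively, and finally pass to the limit. Alternatively, one can invert the smooth part $S_0$ by the ordinary IFT first and write $S^{-1}$ as a conormal perturbation of $S_0^{-1}$; either way the bookkeeping is the same. Second, in part (iii) the phrase ``verbatim'' hides a small bootstrap in (i): the Fa\`a di Bruno argument only gives $1/(1+g)\in\mathcal{A}^0$, and while $\mathcal{A}^{\underline\epsilon}\cdot\mathcal{A}^0\subset\mathcal{A}^{\underline\epsilon}$ suffices for the original statement, in the refined space you need $1/(1+g)\in\Cinfep$ (obtained by applying the already-proved part (i) to $1+g\in\Cinfep$), since $\prod_i\rho_i^{\gamma_i}\Cinfep\cdot\mathcal{A}^0$ only lands in $\mathcal{A}^{\underline\gamma}$, not in $\prod_i\rho_i^{\gamma_i}\Cinfep$. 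Neither point is a genuine gap --- both are fixable with routine care --- but they are the places a referee would push.
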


It is well known that solutions of ODEs 
$$
\frac{dy}{dt} = F(y,t), \quad y(0) = y_0
$$
are smooth if $F$ is smooth, and $y$ also depends smoothly on the
initial condition $y_0$. See e.g.\ Hartman \cite[Chapter 5]{Hartman}.
Here, we note the following variant of this standard result. We find it convenient
to write the ODE in terms of a b-derivative, $t \partial_t$.

\begin{proposition}\label{prop:conormal-regularity}
Consider the ODE
\begin{equation}\begin{aligned}
t \frac{dz}{dt} &= F(z, s, t) \qquad \ z(0) = z_0,  \\
t \frac{ds}{dt} &= s G(z, s, t) \qquad s(0) = s_0 > 0.
\end{aligned}\label{ode}\end{equation}
for $z \in \RR^p$ and $s \in \RR_+$. 

(i) Suppose that $F, G \in \mathcal{A}^{\beta_1, \beta_2}(\RR^{p}_{z} \times \RR^+_{s}
\times \RR^+_t)$, $\beta_i > 0$, where  $\beta_1$ refers to the
$s$ variable and $\beta_2$ to $t$. Then the solution $z = z(z_0, s_0,
t), s = s(z_0, s_0, t)$ satisfies 
$$
z(z_0, s_0, t) - z_0, \quad \frac1{s_0} (s(z_0, s_0, t) - s_0) \in 
\mathcal{A}^{\beta_1, \beta_2}(\RR^{p}_{z_0}
\times \RR^+_{s_0} \times \RR^+_t)$$ 
locally near $t = 0$. 

(ii) Suppose that $F, G \in t C^\infty(\RR^{p}_{z} \times \RR^+_{s}
\times \RR^+_t) +  \mathcal{A}^{\beta_1, \beta_2}(\RR^{p}_{z} \times \RR^+_{s}
\times \RR^+_t)$, $\beta_i > 0$. Then  the solution $z = z(z_0, s_0,
t), s = s(z_0, s_0, t)$ satisfies 
$$
z - z_0,  \frac1{s_0} (s- s_0) \in 
t C^\infty(\RR^{p}_{z_0} \times \RR^+_{s_0}
\times \RR^+_t) +  \mathcal{A}^{\beta_1, \beta_2}(\RR^{p}_{z_0} \times \RR^+_{s_0}
\times \RR^+_t)$$ 
locally near $t = 0$. 

(iii) Let $\beta_i = \gamma_i + \epsilon$, where $\gamma_i > 0$ and $\epsilon$ is sufficiently small. Suppose that 
$F, G \in t C^\infty(\RR^{p}_{z} \times \RR^+_{s}
\times \RR^+_t) + s^{\gamma_1} t^{\gamma_2} C^\infty(\RR^{p}_{z} \times \RR^+_{s}
\times \RR^+_t) +  \mathcal{A}^{\beta_1, \beta_2}(\RR^{p}_{z} \times \RR^+_{s}
\times \RR^+_t)$. Then  the solution $z = z(z_0, s_0,
t), s = s(z_0, s_0, t)$ satisfies 
$$
z - z_0,  \frac1{s_0} (s- s_0) \in 
t C^\infty(\RR^{p}_{z_0} \times \RR^+_{s_0}
\times \RR^+_t) + s_0^{\gamma_1} t^{\gamma_2} C^\infty(\RR^{p}_{z_0} \times \RR^+_{s_0}
\times \RR^+_t) +  \mathcal{A}^{\beta_1, \beta_2}(\RR^{p}_{z_0} \times \RR^+_{s_0}
\times \RR^+_t)
$$
 locally near $t = 0$. 

\end{proposition}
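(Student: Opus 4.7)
The plan is to rewrite \eqref{ode} as a fixed-point equation and apply a contraction mapping argument in a suitable conormal Fréchet space. Since the right-hand sides of \eqref{ode} vanish as $t \to 0$ of order at least $\min(\beta_2, \gamma_2, 1) > 0$, the integrals
$$
z(t) = z_0 + \int_0^t F(z(\sigma), s(\sigma), \sigma) \, \frac{d\sigma}{\sigma}, \quad s(t) = s_0 \exp\!\Big( \int_0^t G(z(\sigma), s(\sigma), \sigma) \, \frac{d\sigma}{\sigma} \Big)
$$
are absolutely convergent and equivalent to \eqref{ode}. Equivalently, $\tau = \log t$ converts \eqref{ode} into a classical ODE on $\tau \in (-\infty, \log T]$ whose right-hand side decays as $\tau \to -\infty$; we keep the original $t$ since the target regularity is most naturally phrased in that variable.

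For part (i), I would close the contraction on a small ball of radius $\delta$ in $\mathcal{A}^{\beta_1, \beta_2}(\RR^p_{z_0} \times \RR^+_{s_0} \times [0, T])$ consisting of pairs $(z, s)$ with $z - z_0$ and $s_0^{-1}(s - s_0)$ in this space. Composition with $F$ preserves $\mathcal{A}^{\beta_1, \beta_2}$ by the chain rule for b-vector fields together with Lemma~\ref{lem:inverses}, while the model integral $\int_0^t \sigma^{\beta_2 - 1}\,d\sigma = t^{\beta_2}/\beta_2$ supplies the required $t$-weight. The b-vector fields $\partial_{z_0}$, $s_0 \partial_{s_0}$, $t \partial_t$ commute, up to bounded operators on the same conormal class, with the integration, so joint conormal regularity in $(z_0, s_0, t)$ is preserved along the Picard iteration. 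Choosing $T$ small forces strict contraction, and the iterates converge to the unique fixed point with the claimed regularity.

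For parts (ii) and (iii) I would decompose $F = F_1 + F_2 + F_3$ with $F_1 \in tC^\infty$, $F_2 \in s^{\gamma_1} t^{\gamma_2} C^\infty$ (present only in (iii)), $F_3 \in \mathcal{A}^{\beta_1, \beta_2}$, and analogously for $G$. The $F_1$ contribution reduces, after dividing by $t$, to a classical smooth ODE whose solution differs from the initial data by an element of $tC^\infty$ by standard smooth dependence on parameters. The $F_3$ contribution is handled by (i). In (iii), the $F_2$ contribution integrates to a term of type $s^{\gamma_1}t^{\gamma_2}C^\infty$; one then converts $s^{\gamma_1}$ into $s_0^{\gamma_1}$ along the flow. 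Writing $s = s_0(1 + \varphi)$ with $\varphi \in \mathcal{A}^{\beta_1, \beta_2}$ small, and using that $u \mapsto (1+u)^{\gamma_1}$ is smooth near $u = 0$, one obtains $(1+\varphi)^{\gamma_1} \in 1 + \mathcal{A}^{\beta_1, \beta_2}$, so that $s^{\gamma_1} = s_0^{\gamma_1} + s_0^{\gamma_1} \mathcal{A}^{\beta_1, \beta_2}$. The extra $s_0^{\gamma_1} t^{\gamma_2}\mathcal{A}^{\beta_1, \beta_2}$ term has total $s_0$-weight $\gamma_1 + \beta_1 \geq \beta_1$ and is therefore absorbed into the $\mathcal{A}^{\beta_1, \beta_2}$ remainder, using $\epsilon$ sufficiently small as in Lemma~\ref{lem:inverses}(iii).

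The main obstacle is the bookkeeping of weights in the composition step of (iii): since $\gamma_1$ is in general non-integer, the identity $s^{\gamma_1} = s_0^{\gamma_1}(1+\varphi)^{\gamma_1}$ cannot be expanded as a finite polynomial, so one must argue via the smoothness of the real-valued function $u \mapsto (1+u)^{\gamma_1}$ composed with the conormal $\varphi$, and verify that this composition preserves $\mathcal{A}^{\beta_1, \beta_2}$. Once this composition lemma is in hand, the contraction mapping on the direct sum space $tC^\infty \oplus s_0^{\gamma_1}t^{\gamma_2}C^\infty \oplus \mathcal{A}^{\beta_1, \beta_2}$ closes and yields the full claim; everything else is a routine Picard/Grönwall argument with the weight functions playing a Lyapunov-like role.
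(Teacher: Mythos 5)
Your strategy --- rewriting the system as a fixed-point equation and arguing that the conormal class is preserved by the Picard iteration --- is close in spirit to the paper's, but the details differ in one important place. The paper first reduces to the scalar system $t\, dZ/dt = \FFF(Z, z_0, s_0, t)$, $Z(0)=0$, and establishes the bound $|S^j T^k D_{z_0}^\alpha Z| \leq C s_0^{\beta_1} t^{\beta_2}$ by induction on $j+k+|\alpha|$, using integrating factors at each step; this is the rigorous form of the Fr\'echet-space ``contraction'' you sketch (a small ball in $\mathcal{A}^{\beta_1,\beta_2}$ is not a Banach ball, so one must in practice control the seminorms one at a time, which is exactly what the induction does). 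The genuinely different step is part (iii). There the paper constructs an explicit approximate solution $\ZZZ = \Zsm + s_0^{\gamma_1} t^{\gamma_2} Z_\gamma$ with $Z_\gamma$ smooth, obtained by solving ODEs for the leading coefficients $v$ at $s_0=0$ and $w$ at $t=0$ and checking compatibility at the corner ($v_i(0) = w_i(0) = \gamma_2^{-1}({\FFF_\gamma})_i(z_0,0,0)$); the conormal remainder $Z_\beta$ is then handled exactly as in part (ii). You instead propose to close the iteration inside the direct-sum class $tC^\infty \oplus s_0^{\gamma_1}t^{\gamma_2}C^\infty \oplus \mathcal{A}^{\beta_1,\beta_2}$, converting $s^{\gamma_1}$ to $s_0^{\gamma_1}$ via $s = s_0(1+\varphi)$ and the smoothness of $u\mapsto(1+u)^{\gamma_1}$. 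That is a legitimate alternative and avoids the boundary-matching computation, but it hinges on a composition lemma --- that $(1+\varphi)^{\gamma_1} - 1$ remains in the direct-sum class when $\varphi$ does, with $\epsilon$ small enough that quadratic terms such as $s_0^{2\gamma_1} t^{2\gamma_2} C^\infty$ fall into $\mathcal{A}^{\beta_1,\beta_2}$ --- which you correctly flag as ``the main obstacle'' but leave unproved. The paper's ansatz method sidesteps that lemma entirely, at the cost of a short but explicit corner-matching computation. Both routes work; yours places the nontrivial content in a composition lemma for the weighted class (analogous to, but not covered by, Lemma~\ref{lem:inverses}(iii)), while the paper places it in the explicit construction of $Z_\gamma$.
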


\begin{proof} 
We start by making some reductions. We first let $\tilde z = z - z_0$ and $\tilde s = \log(s/s_0)$. 
Then $\tilde z(0)$ and $\tilde s(0)$ solve the initial value problem 
\begin{equation}\begin{aligned}
t\frac{d\tilde z}{dt} &= F(\tilde z + z_0, s_0 e^{\tilde s}, t)  \qquad \ \tilde z(0) = 0 \\
t\frac{d\tilde s}{dt} &= G(\tilde z + z_0, s_0 e^{\tilde s}, t) \qquad  \tilde s(0) = 0.\\
\end{aligned}\label{ode2}\end{equation}
Thus, we can combine $(\tilde z, \tilde s)$ into a new variable $Z$, satisfying an equation of the form
$$
t\frac{dZ}{dt} = \FFF(Z, z_0, s_0, t)  \qquad \ \tilde Z(0) = 0 \\
$$
and show conormal regularity in the $(s_0, t)$ variables. 

Let $S, T$ denote the differential operators $s_0 \partial_{s_0}$ and $t \partial_t$ respectively. 
To prove (i), we need to show that $S^j T^k D_{z_0}^\alpha Z(z_0, s_0,
t)$ is bounded by $C s_0^{\beta_1} t^{\beta_2}$ for all
$(j,k,\alpha)$. This is clear when $j = k = |\alpha|= 0$, directly
from a pointwise estimate on $\FFF$. We prove by induction on
$j+k+|\alpha|$. We find that $w := S^j T^k D_{z_0}^\alpha Z(z_0, s_0,
t)$ has $i^{th}$ component satisfying an ODE of the form
$$
t \frac{dw_i}{dt} =  \sum_j \frac{\partial \FFF_i}{\partial z_j} w_j + B,
$$
where $B$ is a sum of products of factors, each of which is a b-derivative of the form $S^{j'} T^{k'} D_{z_0}^{\alpha'}$ applied to $\FFF$ or $Z$, and where the total number of derivatives applied to any factor of $Z$ is strictly less than $j+k+|\alpha|$. By using an integrating factor, the bound $C s_0^{\beta_1} t^{\beta_2}$ on any b-derivative of $\FFF$, and the inductive assumption for lower-order b-derivatives of $Z$,  we deduce a similar bound on $S^j T^k D_{z_0}^\alpha Z$, completing the proof. 

To prove (ii), we write $\FFF = \Fsm + \FFF_c$, where $\Fsm$ is $t$ times a smooth function, and $\FFF_c$ is conormal of order $(\beta_1, \beta_2)$. We write $\Zsm$ for the solution to the ODE
\begin{equation}
t \frac{d \Zsm}{dt} = \Fsm(\Zsm, z_0, s_0, t).
\label{Zsm}\end{equation}
Then $\Zsm \in t C^\infty$ using standard ODE theory. So consider $Z - \Zsm$. This satisfies the ODE
\begin{equation}
t \frac{d (Z - \Zsm)}{dt} = \Fsm(Z, z_0, s_0, t) - \Fsm(\Zsm, z_0, s_0, t) + \FFF_c(Z, z_0, s_0, t).
\label{ZZsm}\end{equation}

It suffices to show that $Z - \Zsm$ is conormal of order $(\beta_1, \beta_2)$. We prove, by induction on $j+k+|\alpha|$, that $S^j T^k D_{z_0}^\alpha (Z - \Zsm)$ is bounded by $C s_0^{\beta_1} t^{\beta_2}$. When $j+k+|\alpha|=0$, notice that the RHS of \eqref{ZZsm} is bounded by $C |Z - \Zsm| + C s_0^{\beta_1} t^{\beta_2}$. We conclude, using an integrating factor, that $|Z - \Zsm|$ is bounded by $Cs_0^{\beta_1} t^{\beta_2}$. Now consider the b-differential operator $S^j T^k D_{z_0}^\alpha$ applied to $Z - \Zsm$. The argument is similar to part (i). 
Consider the ODE satisfied by $S^j T^k D_{z_0}^\alpha (Z - \Zsm)$. On the RHS there will be a sum of products of factors of various sorts. These terms must of one of the following type. The first type is
\begin{equation*}\begin{gathered}
\sum_j \Big( \frac{\partial \Fsm_i(Z)}{\partial z_j}  S^j T^k D_{z_0}^\alpha Z_j - \frac{\partial \Fsm_i(\Zsm)}{\partial z_j}  S^j T^k D_{z_0}^\alpha \Zsm_j \Big) \\
= \sum_j \Big( \frac{\partial \Fsm_i(Z)}{\partial z_j}  S^j T^k D_{z_0}^\alpha (Z_j - \Zsm_j)  + \Big( \frac{\partial \Fsm_i(Z)}{\partial z_j} - \frac{\partial \Fsm_i(\Zsm)}{\partial z_j} \Big) S^j T^k D_{z_0}^\alpha \Zsm_j .
\end{gathered}\end{equation*}
Notice that the first term is a bounded multiple of $S^j T^k D_{z_0}^\alpha (Z_j - \Zsm_j)$, while the second is bounded in magnitude by $C |Z - \Zsm|$, and hence by $Cs_0^{\beta_1} t^{\beta_2}$. 

The next type are terms that involve lower-order b-derivatives of $Z$ and $\Zsm$. All such terms include a factor that is either of the form  $S^{j'} T^{k'} D_{z_0}^{\alpha'} (Z_j - \Zsm_j)$ or $(S^{j'} T^{k'} D_{z_0}^{\alpha'} \Fsm)(Z) - (S^{j'} T^{k'} D_{z_0}^{\alpha'} \Fsm)(\Zsm)$, or else involve $\FFF_c$. Using the inductive assumption, this gives an ODE of the form 
$$
t \frac{dw_i}{dt} =  \sum_j \frac{\partial \FFF_i}{\partial z_j} w_j + B,
$$
for $S^j T^k D_{z_0}^\alpha (Z - \Zsm)$, where $B$ is bounded by $Cs_0^{\beta_1} t^{\beta_2}$. As in part (i), we conclude that $S^j T^k D_{z_0}^\alpha (Z - \Zsm)$ is bounded by $C s_0^{\beta_1} t^{\beta_2}$. 

The proof of part (iii) is similar to part (ii). We write $\FFF = \Fsm + s_0^{\gamma_1} t^{\gamma_2} \FFF_\gamma + \FFF_\beta$, where $\Fsm$ and $\FFF_\gamma$ are smooth. We first find a function $\ZZZ(z_0, s_0, t)$ that solves 
\begin{equation}
t \frac{d\ZZZ}{dt} = \Fsm(\ZZZ, z_0, s_0, t) + s_0^{\gamma_1} t^{\gamma_2} \FFF_\gamma(\ZZZ, z_0, s_0, t)
\label{conormalODE}\end{equation}
up to an error which is conormal of order $(\beta_1, \beta_2)$. To do this, we start from the solution $\Zsm$ of \eqref{Zsm}, and modify it in order to solve away the term $s_0^{\gamma_1} t^{\gamma_2} Z_\gamma(z, z_0, s_0, t)$ to leading order, both at $s_0=0$ and at $t=0$. We propose an ansatz of the form $\ZZZ = \Zsm +s_0^{\gamma_1} t^{\gamma_2} Z_\gamma(z, z_0, s_0, t)$, where $Z_\gamma$ is $C^\infty$. Let $v(z_0, t)$ be the restriction of $Z_\gamma$ to $s_0=0$, and $w(z_0, s_0)$ be the restriction to $t=0$. To simplify notation, we shall suppress the dependence of  all quantities on $z_0$ from now on. 

To see what the functions $v$ and $w$ must be, we substitute $\Zsm +
s_0^{\gamma_1} t^{\gamma_2} Z_\gamma(s_0, t)$ into the ODE. This gives
a polyhomogeneous expansion both as $s_0 \to 0$ and $t \to 0$, with
the first possible non-integral power $s_0^{\gamma_1}$ as $s_0 \to 0$ and $t^{\gamma_2}$ as $t \to 0$. We seek to make these powers agree on the LHS and RHS of the ODE; this will determine $v$ and $w$ uniquely. 

Computing the $s_0^{\gamma_1}$ terms of the RHS and LHS of (A.7) and setting them equal gives
\begin{equation}
t^{\gamma_2} \Big( t \frac{dv_i}{dt} + \gamma_2 v_i \Big) = t^{\gamma_2} \Big( \sum_j \frac{\partial \Fsm(\Zsm(0, t), 0, t)}{\partial z_j} v_j + {\FFF_\gamma}_i(\Zsm(0, t), 0, t) \Big).
\end{equation}
Dividing by $t^{\gamma_2}$ gives an ODE for $v_i$ which has a smooth solution. Moreover, since $\Fsm = O(t)$, the the value of $v_i$ at $t=0$ is given by 
\begin{equation}
v_i(0) = \gamma_2^{-1} {\FFF_\gamma}_i(z_0, 0, 0). 
\end{equation}

Similarly, the coefficient of $t^{\gamma_2}$ of the expansion at $t=0$ of the ODE is given by 
\begin{equation}
s_0^{\gamma_1}  \gamma_2 w_i(s_0, 0) = s_0^{\gamma_1} \Big(  \sum_j  \frac{\partial \Fsm(\Zsm(s_0,0), s_0, 0)}{\partial z_j} w_j + {\FFF_\gamma}_i(\Zsm(s_0,0), s_0, 0) \Big) .
\end{equation}
Clearly this has a smooth solution $w_i(s_0)$, with $w_i(0) = \gamma_2^{-1}  {\FFF_\gamma}_i(z_0, 0, 0) = v_i(0)$. Since $v(0) = w(0)$, we can find a smooth $Z_\gamma(s_0, t)$ that agrees with $v$ at $s_0=0$ and with $w$ at $t=0$. Then it is easy to check that $\Zsm +s_0^{\gamma_1} t^{\gamma_2} Z_\gamma(z, z_0, s_0, t)$ solves the ODE \eqref{conormalODE} up to an error term that is conormal of order $(\beta_1, \beta_2)$, provided that $\epsilon$ is sufficiently small. 

To complete the proof, we look for a solution $Z'(z_0, s_0, t)$ of the ODE 
$$
t\frac{dZ'}{dt} = \Big( \Fsm + s_0^{\gamma_1} t^{\gamma_2} \FFF_\gamma + \FFF_\beta \Big)(Z, z_0, s_0, t)
$$
of the form $\Zsm +s_0^{\gamma_1} t^{\gamma_2} Z_\gamma(z, z_0, s_0, t) + Z_\beta$. It suffices to show that $Z_\beta$ is conormal of order $(\beta_1, \beta_2)$. This is proved using exactly the same argument as in (ii) above, so we omit the details. 
\end{proof}

In the course of this proof, we have essentially proved the following perturbation result: 

\begin{lemma}\label{lem:comparison} Suppose that $z, s$ solve the ODE \eqref{ode}, where $F, G \in tC^\infty$. Let $\tilde F, \tilde G$ be functions in $s^{\gamma_1} t^{\gamma_2} C^\infty + \mathcal{A}^{(\beta_1, \beta_2)}$,  where $\gamma_i$ and $\beta_i$ are as in Proposition~\ref{prop:conormal-regularity}, part (iii), and let $F_* = F + \tilde F$ and $G_* = G + \tilde G$. 

Let   $z_*$, $s_*$ solve the ODE with $F, G$ replaced with $F_*, G_*$, and with the same initial conditions as in \eqref{ode}. Then  
$$
z(t) - z_*(t), \frac1{s_0}(s - s_*) \in s_0^{\gamma_1} t^{\gamma_2} C^\infty + \mathcal{A}^{(\beta_1, \beta_2)}. 
$$

\end{lemma}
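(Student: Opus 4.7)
The plan is to mimic the proof of Proposition~\ref{prop:conormal-regularity}(iii), applied now to the difference $(z - z_*, \log(s/s_*))$ rather than to a single solution from scratch. First I would reduce, as in the proof of the proposition, by setting $\tilde z = z - z_0$, $\tilde s = \log(s/s_0)$, and analogously $\tilde z_*, \tilde s_*$, so that both $Z = (\tilde z, \tilde s)$ and $Z_* = (\tilde z_*, \tilde s_*)$ vanish at $t=0$ and satisfy ODEs of the form
\[
t \frac{dZ}{dt} = \FFF(Z, z_0, s_0, t), \qquad t \frac{dZ_*}{dt} = \FFF_*(Z_*, z_0, s_0, t),
\]
where $\FFF \in tC^\infty$ and $\FFF_* - \FFF \in s_0^{\gamma_1} t^{\gamma_2} C^\infty + \mathcal{A}^{(\beta_1, \beta_2)}$. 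It then suffices to prove that $W := Z_* - Z$ lies in $s_0^{\gamma_1} t^{\gamma_2} C^\infty + \mathcal{A}^{(\beta_1, \beta_2)}$.

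Next I would write the ODE for $W$. Subtracting and using the mean value theorem gives
\[
t \frac{dW}{dt} = A(z_0, s_0, t) W + s_0^{\gamma_1} t^{\gamma_2} \FFF_\gamma(Z_*, z_0, s_0, t) + \FFF_\beta(Z_*, z_0, s_0, t) + R(Z, W),
\]
where $A = \int_0^1 D_z \FFF(Z + \sigma W)\, d\sigma$ is smooth in its arguments, $\FFF_\gamma$ is smooth, $\FFF_\beta$ is conormal of order $(\beta_1, \beta_2)$, and $R$ is quadratic in $W$ with smooth coefficients. This is structurally identical to equation \eqref{conormalODE} but with an extra linear factor $A$; the same ansatz method applies.

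For the main step, I would construct an approximate solution by the ansatz $W_{\mathrm{app}} = s_0^{\gamma_1} t^{\gamma_2} W_\gamma(z_0, s_0, t)$ with $W_\gamma \in C^\infty$, chosen so that the $s_0^{\gamma_1}$ Taylor coefficient at $s_0=0$ and the $t^{\gamma_2}$ Taylor coefficient at $t=0$ of $t dW_{\mathrm{app}}/dt - A W_{\mathrm{app}} - s_0^{\gamma_1} t^{\gamma_2} \FFF_\gamma(Z,\cdot)$ vanish. As in the proof of part (iii) of Proposition~\ref{prop:conormal-regularity}, this yields a (smooth) ODE in $t$ at $s_0 = 0$ for $v(z_0, t) := W_\gamma|_{s_0=0}$ and an algebraic equation at $t=0$ for $w(z_0, s_0) := W_\gamma|_{t=0}$, and the compatibility condition $v(z_0,0) = w(z_0,0)$ follows (as there) from the fact that both are equal to $\gamma_2^{-1} \FFF_\gamma(z_0, 0, 0)$; Borel summation then produces the required smooth $W_\gamma$. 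After subtracting $W_{\mathrm{app}}$, the remaining error $W - W_{\mathrm{app}}$ solves an ODE of the same form with a purely conormal inhomogeneity of order $(\beta_1, \beta_2)$, and the induction-on-b-derivatives argument already given in the proof of part (ii) of the proposition shows directly that this error lies in $\mathcal{A}^{(\beta_1, \beta_2)}$.

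The only real obstacle is the compatibility at the corner $\{s_0=t=0\}$: one must check that the smooth $C^\infty$ piece predicted by the two formal expansions (in $s_0$ and in $t$) agrees at their common corner, so that a genuine smooth $W_\gamma$ exists on the full quadrant. This is exactly the computation already carried out in the proof of Proposition~\ref{prop:conormal-regularity}(iii), and it transfers verbatim once one notes that $A(z_0, 0, 0) = 0$ because $\FFF = tC^\infty$ implies $D_z \FFF$ vanishes at $t=0$; consequently the $v$ and $w$ equations reduce precisely to those appearing there, and the matching $v(z_0,0) = w(z_0,0)$ holds for the same reason. Once this is in place the rest is bookkeeping, and the smallness assumption on $\epsilon$ is exactly what is needed to absorb terms of the form $s_0^{2\gamma_1}t^{2\gamma_2}$ into $\mathcal{A}^{(\beta_1,\beta_2)}$.
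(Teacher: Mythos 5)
Your proof is correct and takes the same route the paper has in mind: the paper does not give a separate argument for Lemma~\ref{lem:comparison}, observing only that it is ``essentially proved'' in the course of proving Proposition~\ref{prop:conormal-regularity}, and your reconstruction --- reduce to $W = Z_* - Z$, build an approximate solution $s_0^{\gamma_1}t^{\gamma_2}W_\gamma$ by matching Taylor coefficients at the two boundary faces, check compatibility at the corner via the vanishing of the linearization $A$ at $t=0$ (a consequence of $\FFF \in tC^\infty$), then absorb the remainder by the b-derivative induction from part (ii) --- is exactly that argument transferred to the difference. One small cosmetic point: if you define $A$ as the averaged Jacobian $\int_0^1 D_z\FFF(Z+\sigma W)\,d\sigma$, then $AW$ already equals $\FFF(Z_*)-\FFF(Z)$ exactly and the extra quadratic remainder $R(Z,W)$ is superfluous; alternatively take $A = D_z\FFF(Z)$ and keep $R$. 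Either choice works, since for the corner matching you only use $A|_{t=0}=0$.
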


The last result we shall need is closely related related to Proposition~\ref{prop:conormal-regularity}, but where the initial conditions are specified at a positive value of $t$, say $t=\delta$, where we suppose that $\delta > 0$ is sufficiently small that the solution exists on the time interval $t \in [0, \delta]$, and we are interested in the value at $t=0$. To state these results we need to introduce spaces of functions with different sorts of regularity in the $s$ and the $t$ variable. We write $\mathcal{A}^\beta_s C^\infty_{t, z}$ for the space of functions with conormal regularity of order $\beta$ in the $s$ variable and $C^\infty$ regularity in $t$ and $z$. 

\begin{proposition}\label{prop:cr2} Let $(z, s)$ solve the ODE 
\begin{equation}\begin{aligned}
t \frac{dz}{dt} &= F(z, s, t) \qquad \ z(1) = z_0,  \\
t \frac{ds}{dt} &= s G(z, s, t) \qquad s(1) = s_0 > 0
\end{aligned}\label{ode3}\end{equation}
with initial conditions now at $t=1$. Then

(i) Suppose that $F, G$ are as in (i) of Proposition~\ref{prop:conormal-regularity}. Then  
$$
z(z_0, s_0, t) - z_0, \quad \frac1{s_0} (s(z_0, s_0, t) - s_0) \in \mathcal{A}^{\beta_1}_{s_0}  C^\infty_{t,z}
+ \mathcal{A}^{\beta_1, \beta_2}(\RR^{p}_{z_0}
\times \RR^+_{s_0} \times \RR^+_t).$$ 

(ii) Suppose that $F, G$ are as in (ii) of Proposition~\ref{prop:conormal-regularity}. Then  
$$
z - z_0,  \frac1{s_0} (s- s_0) \in C^\infty + \mathcal{A}^{\beta_1}_{s_0}  C^\infty_{t,z} +
  \mathcal{A}^{\beta_1, \beta_2}(\RR^{p}_{z_0} \times \RR^+_{s_0}
\times \RR^+_t).$$ 

(iii) Suppose that 
$F, G$ are as in (iii) of Proposition~\ref{prop:conormal-regularity}. Then   
$$
z - z_0,  \frac1{s_0} (s- s_0) \in C^\infty + s_0^{\gamma_1} (C^\infty + t^{\gamma_2} C^\infty) + 
\mathcal{A}^{\beta_1}_{s_0} \Big( C^\infty_{t,z} + t^{\gamma_2} C^\infty_{t,z}  + \mathcal{A}^{\beta_2}_t C^\infty_z \Big). 
$$

\end{proposition}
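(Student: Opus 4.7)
The plan is to reduce Proposition~\ref{prop:cr2} to Proposition~\ref{prop:conormal-regularity} via a time-one flow inversion argument. Let $(Y(z_*, s_*, t), U(z_*, s_*, t))$ denote the solution of the same ODE but with initial data specified at the singular point $t=0$: $Y(z_*, s_*, 0) = z_*$, $U(z_*, s_*, 0) = s_*$. By Proposition~\ref{prop:conormal-regularity}, $Y - z_*$ and $s_*^{-1}(U - s_*)$ lie in the regularity classes stated there. Assuming the solution extends to $t \in [0,1]$, define the time-one flow map $\Phi(z_*, s_*) = (Y(z_*, s_*, 1), U(z_*, s_*, 1))$. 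Since $t = 1$ is bounded away from the singular locus $\{t=0\}$, restricting those regularity classes to $t=1$ yields: in case (i), $\Phi - \mathrm{Id} \in \mathcal{A}^{\beta_1}_{s_*}C^\infty_{z_*}$; in case (ii), evaluating $tC^\infty$ at $t=1$ adds a $C^\infty$ summand; in case (iii), evaluating $s_*^{\gamma_1}t^{\gamma_2}C^\infty$ at $t=1$ further adds an $s_*^{\gamma_1}C^\infty$ summand.

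Because $\Phi$ equals $\mathrm{Id}$ plus a small perturbation (vanishing as $s_* \to 0$) with Jacobian bounded away from zero, Lemma~\ref{lem:inverses} (parts (ii) and (iii)) produces an inverse $\Phi^{-1}$ in the same regularity class, with $s_*$ replaced by $s_0$. The solution of Proposition~\ref{prop:cr2} is then obtained as
\begin{equation*}
(\tilde Y, \tilde U)(z_0, s_0, t) = (Y, U)\bigl(\Phi^{-1}(z_0, s_0), t\bigr).
\end{equation*}
To extract the regularity, I would decompose
\begin{equation*}
\tilde Y(z_0, s_0, t) - z_0 = \bigl[Y(z_*, s_*, t) - z_*\bigr]\Big|_{(z_*,s_*)=\Phi^{-1}(z_0,s_0)} + \bigl[z_*(z_0, s_0) - z_0\bigr],
\end{equation*}
and analogously for $s_0^{-1}(\tilde U - s_0)$. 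The second bracket is independent of $t$ and yields precisely the $t$-independent portion of the target regularity class, directly inherited from $\Phi^{-1} - \mathrm{Id}$.

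For the first bracket, the main task is to verify that each regularity class from Proposition~\ref{prop:conormal-regularity} is stable under composition with the substitution $s_* = s_0(1+h)$, $z_* = z_0 + g$, where $h, g$ lie in the regularity class of $\Phi^{-1} - \mathrm{Id}$. The class $\mathcal{A}^{\beta_1, \beta_2}$ is preserved since $s_*^{\beta_1} = s_0^{\beta_1}(1+h)^{\beta_1}$ and $(1+h)^{\beta_1}$ is bounded with bounded b-derivatives. The class $tC^\infty$ composed with the substitution yields $C^\infty + \mathcal{A}^{\beta_1}_{s_0}C^\infty$ after Taylor expansion in $h$. In case (iii), expansion of $(1+h)^{\gamma_1}$ yields a principal $s_0^{\gamma_1}t^{\gamma_2}C^\infty$ term plus higher-order cross terms such as $s_0^{2\gamma_1}t^{\gamma_2}C^\infty$ and $s_0^{\gamma_1+\beta_1}t^{\gamma_2}C^\infty$, which can be absorbed into $\mathcal{A}^{\beta_1}_{s_0}t^{\gamma_2}C^\infty$ provided $2\gamma_1 \geq \beta_1$, i.e., $\epsilon \leq \gamma_1$.

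The main obstacle is the combinatorial bookkeeping in case (iii), where the fractional-power class $s_0^{\gamma_1}C^\infty$ and the conormal class $\mathcal{A}^{\beta_1}_{s_0}$ interact under composition, and every cross term produced by Taylor-expanding $(1+h)^{\gamma_1}$ (and the analogous expansion of the smooth factor $g(s_*, z_*, t)$) must be verified to land in $\mathcal{A}^{\beta_1}_{s_0}(C^\infty + t^{\gamma_2}C^\infty + \mathcal{A}^{\beta_2}_t C^\infty)$. The hypothesis that $\epsilon$ is sufficiently small relative to the $\gamma_i$ is precisely what guarantees the required absorptions, and this is the same smallness condition invoked in part (iii) of Lemma~\ref{lem:inverses}, so the proofs of inversion and of composition regularity work in tandem.
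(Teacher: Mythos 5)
Your proposal is correct, but it takes a genuinely different route from the paper. The paper's proof of Proposition~\ref{prop:cr2} is a direct re-run of the inductive argument used for Proposition~\ref{prop:conormal-regularity}: one solves the ODE by integrating from $t=1$ rather than from $t=0$, and the extra $t$-independent terms in the conclusion arise at a single point, namely because integrating $t^{\gamma_2}$ against $dt/t$ from $1$ produces $C^\infty_t + t^{\gamma_2} C^\infty_t$ rather than just $t^{\gamma_2} C^\infty_t$. Your argument instead reduces Proposition~\ref{prop:cr2} to the already-proved Proposition~\ref{prop:conormal-regularity} by inverting the time-$\delta$ flow map $\Phi$ and composing. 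That reduction is conceptually cleaner and reuses the earlier proposition as a black box, but it pushes the work into two places that you indicate without fully executing: (a) showing that the $C^\infty + s_0^{\gamma_1}\Cinfe$-type classes (and in case (i) the pure conormal class, with its smooth part pinned to the identity) are stable under inversion — Lemma~\ref{lem:inverses}(ii)--(iii) give regularity of the inverse but not directly that $\Phi^{-1} - \Id$ stays in the same \emph{perturbative} class; and (b) showing stability of each conormal and mixed class from Proposition~\ref{prop:conormal-regularity} under composition with the b-map $(z_0,s_0) \mapsto (z_0 + g, s_0(1+h))$, which involves chain-rule bookkeeping not covered verbatim by Lemma~\ref{lem:inverses}. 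A further small imprecision: in cases (ii) and (iii), $\Phi - \Id$ contains a genuine $C^\infty$ piece which does \emph{not} vanish as $s_* \to 0$, so invertibility of $\Phi$ really relies on the standing assumption that $\delta$ (written as $1$ in the statement) is small, not merely on the decay of the perturbation at the boundary. None of these are fatal — with $\epsilon \leq \min(\gamma_1,\gamma_2)$ the absorptions you describe do close up — but they mean your proof is not actually shorter than the paper's direct integration, which sidesteps the composition question entirely.
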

The proof is essentially identical to that of Proposition~\ref{prop:conormal-regularity}, and so is omitted. The only difference is that, instead of integrating from $t=0$, we integrate from $t=1$, so that, for example, when we integrate a term of the form $t^{\gamma_2} C^\infty_t$ with respect to $dt/t$, we only get $C^\infty_t + t^{\gamma_2} C^\infty_t$ rather than just $t^{\gamma_2} C^\infty_t$, accounting for the extra terms in Proposition~\ref{prop:cr2} compared to Proposition~\ref{prop:conormal-regularity}. 

A simple consequence of Proposition~\ref{prop:cr2} is 
\begin{corollary} \label{cor:solat0}
In case (iii) of Proposition~\ref{prop:cr2}, the functions $z(0), s(0)/s_0$ are $C^\infty + s_0^{\gamma_1} \Cinfe$ functions of the initial data $(z_0, s_0)$. 
\end{corollary}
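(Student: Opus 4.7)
The plan is to derive this as an immediate consequence of Proposition~\ref{prop:cr2}(iii) by evaluating each summand on the right-hand side of the regularity statement at the hypersurface $t=0$. The key observation is that every such summand is of one of three types: either it carries an explicit factor of $t^{\gamma_2}$, or it carries a conormal factor $\mathcal{A}^{\beta_2}_t$ (which vanishes as $t \to 0$ since $\beta_2 = \gamma_2 + \epsilon > 0$), or else its $t$-dependence is simply $C^\infty$. So restriction to $t = 0$ kills the first two types and leaves only contributions whose regularity in the surviving variables $(z_0, s_0)$ needs to be identified.

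First I would run through the six summands one by one. The pure $C^\infty(z_0, s_0, t)$ piece restricts to a smooth function of $(z_0, s_0)$. The $s_0^{\gamma_1} C^\infty$ piece restricts to $s_0^{\gamma_1}$ times a smooth function of $(z_0, s_0)$, which lies in $s_0^{\gamma_1} \Cinfe$. The three summands containing either $t^{\gamma_2}$ or $\mathcal{A}^{\beta_2}_t$ vanish at $t=0$. This leaves only the summand $\mathcal{A}^{\beta_1}_{s_0} C^\infty_{t,z}$, which restricts at $t=0$ to $\mathcal{A}^{\beta_1}$ in $s_0$ tensored with $C^\infty$ in $z_0$; invoking $\beta_1 = \gamma_1 + \epsilon$ and writing $f = s_0^{\gamma_1}\bigl(s_0^{\epsilon} \cdot s_0^{-\beta_1} f\bigr)$ displays this as $s_0^{\gamma_1}$ times an element of $\mathcal{A}^\epsilon \subset \Cinfe$, as required.

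Summing the surviving contributions gives membership of $z(0) - z_0$ and $s_0^{-1}(s(0) - s_0)$ in $C^\infty + s_0^{\gamma_1} \Cinfe$, and hence the conclusion for $z(0)$ and $s(0)/s_0$ since $z_0$ and the constant $1$ are manifestly smooth in $(z_0, s_0)$. There is no real obstacle here: this is a pure bookkeeping restriction of Proposition~\ref{prop:cr2}(iii), and the only step requiring even momentary thought is the identification of $\mathcal{A}^{\beta_1}_{s_0} C^\infty_{z_0}$ as a subset of $s_0^{\gamma_1} \Cinfe$ via the relation $\beta_1 = \gamma_1 + \epsilon$.
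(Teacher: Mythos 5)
Your proposal is correct, and it carries out precisely the routine restriction to $t=0$ that the paper has in mind when it calls the corollary ``a simple consequence of Proposition~\ref{prop:cr2}'' (the paper gives no further details). Your term-by-term bookkeeping of the six summands, and in particular the observation that $\mathcal{A}^{\beta_1}_{s_0} C^\infty_{z_0} \subset s_0^{\gamma_1}\Cinfe$ via $\beta_1 = \gamma_1 + \epsilon$, is exactly the point.
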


\subsection{The sojourn relation}\label{sec:sojourn-relation}

As we describe concretely in the following subsection, according to \cite{HW2008}, the Poisson operator is a microlocal
object associated to the `sojourn relation' $\SR$.  We now proceed to
describe $\SR$ and determine its regularity properties.  
In this paper, we shall take the viewpoint that $\SR$ is a Lagrangian
submanifold of $T^* \RR^d \times T^* \mathbb{S}^{d-1}$, that extends
nicely to a certain compactification of this space\footnote{In
  \cite{HW2008}, the sojourn relation was viewed as a Legendre
  submanifold of a space with one extra dimension, with the extra
  coordinate being the variable denoted $\phi$ below. Here, we take
  the view that $\phi$ is a function defined on $\SR$.}.

First we describe this (partial) compactification\footnote{Our partial
  compactification serves to make the energy surface $\{ |\xi|^2 + V =
  E \}$ compact, which is all that matters.} of $T^* \RR^d \times T^* \mathbb{S}^{d-1}$. Let $r$ denote the radial variable $|x|$ and let $y$ be local coordinates on $\mathbb{S}^{d-1}$. We write $h^{ij}(y)$ for the (dual) metric on $\mathbb{S}^{d-1}$ with respect to these local coordinates. If we write $(\lambda, \eta)$ for cotangent variables dual to $(r, y)$ on $\RR^d$, then it is natural to use $(\lambda, \mu = \eta/r)$ near infinity, as these are variables that are homogeneous of degree zero under dilations, i.e.\ remain of fixed length as $r \to \infty$.  We write $\eta'$ for a cotangent variable dual to $y'$ on $T^* \mathbb{S}^{d -1}$, and scale it in the same way as $\eta$; that is, let $\mu' = \eta'/r$. 
Finally, we radially compactify Euclidean space by introducing $\rho = r^{-1}$ and adding a boundary at $\rho = 0$. (However, the space is still not compact as $\mu, \mu'$ vary in $\RR^{d-1}$ and $\phi$ varies in $\RR$.)

\begin{equation}
\mbox{We denote the space with coordinates  } (\rho, y, y'; \lambda, \mu,
\mu', \phi)\label{coords2} \mbox{ by $\X$.}
\end{equation}
A more invariant description of this space is
given in \cite{HW2008}, but we wish to avoid the geometric intricacies
here. The space $\X$ is a manifold with boundary, with the boundary
defined by $\{ \rho = 0 \}$, and we ignore the apparent singularity in
$\rho$ at $r = 0$ as we work in a neighborhood of $\rho = 0$.

The space $\X$ (or at least its interior) is a symplectic manifold with contact form $d\xi_j \wedge dx_j + d\eta'_i \wedge dy_i$. 
Let $\Vec$ be the Hamilton vector field for the Hamiltonian $|\xi|^2 +
V(r, y) - E = \lambda^2 + |\mu|^2 + V - E$, and let $\Vec' = r
\Vec$. 
In the coordinates $(r, y, \lambda, \eta, y', \eta')$,  $\Vec$ is given by 
\begin{equation}\begin{gathered}\label{eq:Vec}
\Vec = 2\lambda \frac{\partial}{\partial r} + \frac{2h^{ij} \eta_j}{r^{2}}
\frac{\partial}{\partial y_i} + \Big( \frac{2h^{ij} \eta_i
  \eta_j}{r^3} - \frac{\partial V}{\partial r}
\Big)\frac{\partial}{\partial \lambda} \\ - \Big(  \frac{\partial
  h^{ij}}{\partial y_k}\frac{\eta_i \eta_j}{r^2} -  \frac{\partial V}{\partial y_k} \Big)
\frac{\partial}{\partial \eta_k},
\end{gathered}\end{equation}
where we sum over repeated indices. In the coordinates $(\rho, y, \lambda, \mu, y', \mu')$, $\Vec'$ is given by 
\begin{equation}\begin{gathered}
\Vec' = -2\lambda \Big( \rho \frac{\partial}{\partial \rho} + \mu \cdot \frac{\partial}{\partial \mu} + \mu' \cdot \frac{\partial}{\partial \mu'} \Big) 
+ 2h^{ij} \mu_i \frac{\partial}{\partial y_j} \\\qquad \ + \Big(2 h^{ij} \mu_i
\mu_j +  \rho \frac{\partial V}{\partial \rho} \big)
\frac{\partial}{\partial \lambda} - \Big( \frac{\partial
  h^{ij}}{\partial y_k} \mu_i \mu_j -  \frac{\partial
  V}{\partial y_k}  \Big) \frac{\partial}{\partial \mu_k} . 
\end{gathered}\label{Vec'}\end{equation}

We now perform the operation of `blowing up' $\X$ at the submanifold
$Z = \{ \rho = 0, \mu = 0, \mu' = 0 \}$.  
This operation consists of 
replacing $Z$ with its inward pointing spherical normal bundle, which
turns the space $\X$ into a manifold with codimension 2 corners that
we shall denote $[\X; Z]$. This means essentially that $[\X; Z]$ is
the `minimal' manifold with corners on which the polar coordinates 
$$
\trho  = (\rho^2 + |\mu|^2 + |\mu'|^2)^{1/2} , \qquad \rho_B = \frac{\rho}{\trho}, \qquad \theta = (
 \frac{\mu}{\trho}, \frac{\mu'}{\trho}), 
$$
together with $y, y'$, extend smoothly up to all boundary faces.
It can be viewed as the geometric realization of
polar coordinates at $Z$, that is, the space on which polar coordinates are 
smooth. The space $[\X; Z]$ has two boundary
hypersurfaces. One boundary hypersurface is the original boundary
$\rho = 0$, or rather the lift of this to the blown up space; we shall
denote this $B$. The other is the boundary hypersurface $\tilde Z$
created by blowup. We let $\trho$ denote \textit{any} boundary defining
function for $\tilde Z$; the above formula for $\trho$ is just an
example, as any $\trho$ satisfying the properties for bdf's (see
Section \ref{subsec:conormal}) will work, and in fact when convenient
we will take $\trho = |\mu|$ near the intersection of $B$ with
$\wt{Z}$ (the `corner') and $\trho = \rho$ in the interior of $\tilde Z$.  It follows that
$\rho_B := \rho / \trho$ is a boundary defining function of $B$.
Away from $B$, coordinates near $\tilde Z$ are 
\begin{equation}
\rho, \ \eta, \ \eta', \ y, \ y', \ \lambda,
\label{coords1}\end{equation}
or equivalently one can take $(\trho, \ \eta, \ \eta', \ y, \ y', \
\lambda)$.  Indeed, both $\mu / \rho =
\eta$ and $\mu' / \rho = \eta'$ are bounded maps on compact subsets of
the interior of $\tilde Z$, and thus, not only can we take $\trho =
\rho$ but the above functions can be checked to yield a coordinate
patch on a tubular neighborhood $\tilde Z^{\circ}  \times
[0, \epsilon)_{\trho}$.

We now write $\Vec'$ on the space $[\X; Z]$. Notice that $\rho \partial_\rho + \mu \partial_\mu + \mu' \partial_{\mu'}$ is precisely $\trho \partial_{\trho}$. 
Also, we can easily check that $\partial_{y_j}$, $\partial_\lambda$
and $\trho \partial_\mu$ lift to  smooth vector fields on $[\X;
Z]$. Using the assumption that $V \in \rho^{\alpha} C^{\infty,
  \underline{\epsilon}}(\overline{\mathbb{R}^d}) \subset (\trho
\rho_B)^{\alpha} C^{\infty, \underline{\epsilon}}([\X ; \tilde Z])$,
where we use the notation of Section~\ref{subsec:conormal}, 
we compute that $\Vec'$ takes the form
\begin{equation}\begin{gathered}
\Vec' = \Big( -2\lambda \trho + O(\rho_B^\alpha \trho^\alpha C^{\infty, \underline{\epsilon}}) \Big) \partial_{\trho} + 
O(\rho_B^{\alpha+1} \trho^{\alpha-1} C^{\infty, \underline{\epsilon}} ) \partial_{\rho_B} \\
+ h^{ij}\mu_i \partial_{y_j} + \Big(2 h^{ij} \mu_i
\mu_j + \rho \frac{\partial V}{\partial \rho} \Big)
\partial_\lambda + O(\rho_z C^\infty + \rho_B^\alpha \trho^{\alpha-1} C^{\infty, \underline{\epsilon}}) \partial_{\theta},
\end{gathered}\end{equation}
where the conormal coefficients of $\p_{\trho}$ and $\p_{\rho_B}$ come
from the $\p_{y_k} V$ coefficients of $\p_{\mu_k}$ in \eqref{Vec'}.
Note that $\Vec'$ is a `conormal b-vector field,' meaning it has
conormal regularity and is
tangent to both boundary hypersurfaces $B$ and $\wt{Z}$; this can be
seen directly by noting that all the $\p_{\rho_B}$, resp.\ $\p_{\trho}$
terms vanish at $B$, resp.\ $\wt{Z}$.

We will multiply $\Vec'$ by a function so that near $\tilde Z$ we can
use $\trho$ as a parameter for the flow.  Thus for $0 < c < 1 / 2$ to be
chosen below, denote by
$\kappa$ the function on $\SR$ equal to $1$ for $|\lambda| \leq c \sqrt{E}$, and
equal to $-2(\sgn \lambda)\sqrt{E} \trho / (\Vec' \trho)$ for $|\lambda| \geq (1-c) \sqrt{E}$. 
Letting $\Vec'' = \kappa \trho^{-1} \Vec'$, we have that
$\Vec''(\trho) = -2(\sgn \lambda)\sqrt{E}$ near $\cup_{\pm} \p_{\pm}
\SR$, and it follows that for small enough $c$,
$\Vec''$ is a smooth vector field on the interior of the blown up space taking the form
\begin{equation}\begin{gathered}
\Vec'' =  -2(\sgn \lambda)\sqrt{E} \partial_{\trho} + 
O(\rho_B^{\alpha} \trho^{\alpha-2} C^{\infty, \underline{\epsilon}} ) \rho_B \partial_{\rho_B} \\
 + O(C^\infty + \rho_B^\alpha \trho^{\alpha-2} C^{\infty, \underline{\epsilon}}) \partial_{y, \lambda, \theta},
\end{gathered}\label{eq:time-and-rho}\end{equation} 
in the region $|\lambda| > (1 - c) \sqrt{E}$, while the coefficient of
$\p_\trho$ lies in $C^\infty + \rho_B^\alpha C^{\infty, \epsilon}$
outside this region.
Notice that $\kappa =1$ at the boundary of $\SR$, and that
$\Vec''$ is tangent to $B$, but transverse to $\tilde Z$, pointing
`inward' for $\lambda < 0$ and `outward' for $\lambda > 0$. We also
note for future reference, that, if $(\Vec^0)''$ is the corresponding
vector field for the zero potential, that 
\begin{equation}\begin{gathered}
\Vec'' - (\Vec^0)'' =  O(\rho_B^\alpha \trho^{\alpha-1} C^{\infty, \underline{\epsilon}})\partial_{\trho} + 
O(\rho_B^{\alpha} \trho^{\alpha-2} C^{\infty, \underline{\epsilon}} )  \rho_B \partial_{\rho_B} \\
+ O(\rho_B^\alpha \trho^{\alpha-2} C^{\infty, \underline{\epsilon}}) \partial_{y, \lambda, \theta}.
\end{gathered}\label{VecVec0}\end{equation}

 \begin{definition} We define the Lagrangian submanifold $\SR$ as follows: 
 we start from the `initial condition'
\begin{equation}
\partial_- \SR := \{ \trho = 0, y = y', \eta = -\eta', \lambda = -\sqrt{E} \} \subset [\X; Z],
 \label{SR-}\end{equation}
written using the coordinates \eqref{coords1}, which is a submanifold of $\tilde Z$.\footnote{Near the boundary of $\tilde Z$, we use the coordinates \eqref{coords2} and
write it in the form 
\begin{equation}
 \{ |\mu| = 0, \ - \hat \mu' = \hat \mu, \ |\mu'|/|\mu| = 1, \  \omega = \omega',  \ \lambda = -\sqrt{E}, \ \phi = 0 \}.
 \end{equation}
This is clearly a smooth submanifold of $\tilde Z$.} Then $\SR$ is defined as
the flow out from $\partial_- \SR$  using the vector field $\Vec''$,
that is, the union of all integral curves of $\Vec''$ starting at
points of $\partial_- \SR$. 
\end{definition}

\begin{lemma}\label{lem:uft}
All integral curves of $\Vec''$ starting at $\partial_- \SR$ reach the set $\tilde Z \cap \{ \lambda = +\sqrt{E} \}$ in finite time. 
\end{lemma}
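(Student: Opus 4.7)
The plan is to split each integral curve of $\Vec''$ starting at $\partial_-\SR$ into three portions---an initial portion leaving $\partial_-\SR$, a bulk portion in the interior of $[\X;Z]$, and a final portion approaching $\partial_+\SR \subset \tilde Z \cap \{\lambda = +\sqrt{E}\}$---and show that each takes finite $\Vec''$-time. The key structural observation is that on the interior of $[\X;Z]$, $\Vec''$ is a strictly positive smooth multiple of the Hamilton vector field $\Vec$ of $|\xi|^2 + V - E$, so any integral curve of $\Vec''$ there projects to a reparametrized bicharacteristic at energy $E$.

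For the initial portion, formula \eqref{eq:time-and-rho} gives $\Vec''(\trho) = 2\sqrt{E}$ to leading order at $\partial_-\SR$ (since $\lambda = -\sqrt{E}$ there and $-\sgn\lambda = +1$), and $\Vec''$ is tangent to $B$, so $\trho$ grows linearly and the flow enters the interior. Reading off coordinates via $\mu = \eta/r$, $\mu' = \eta'/r$, the bulk bicharacteristic is the one asymptotic to $x(t) = -t\omega' + \eta' + o(1)$ as $t \to -\infty$. By the nontrapping assumption at energy $E$, this bicharacteristic escapes as $t \to +\infty$ with $r \to \infty$, $\xi(t) \to \sqrt{E}\omega$, and in particular $\lambda \to +\sqrt{E}$ and $\mu, \mu' \to 0$, so the curve approaches $\partial_+\SR$.

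The crucial issue is that reaching $r = \infty$ takes infinite physical time but must take finite $\Vec''$-time. This is precisely what the rescaling $\Vec'' = \kappa \trho^{-1} \Vec' = \kappa r \trho^{-1} \Vec$ is designed to achieve: near $\tilde Z$ one may take $\trho = \rho = 1/r$, so $\Vec'' = \kappa r^2 \Vec$, and along an outgoing ray $r \sim ct$, giving $dt \sim (\kappa r^2)^{-1} dt''$, which integrates to a finite $t''$-interval as $t \to +\infty$. Equivalently, once the trajectory enters $\{\lambda \geq (1-c)\sqrt{E}, \, \trho \leq \delta\}$, formula \eqref{eq:time-and-rho} gives $\Vec''(\trho) = -2\sqrt{E} + O(\trho^{\alpha - 1})$ and $\trho$ drops to $0$ at essentially unit rate. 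The bulk phase between these two endpoint stages takes finite time because $\Vec''$ is smooth and bounded away from zero on any compact subset of the energy surface contained in the interior of $[\X;Z]$.

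The main technical point to justify is that the trajectory stays away from the other boundary face $B$ during the bulk phase and approaches $\tilde Z$ only at its interior (not through the corner $B \cap \tilde Z$). This follows from conservation of energy on the bicharacteristic: since $V \to 0$ as $r \to \infty$, we have $|\xi|^2 \to E$, so $\mu = \xi/r \to 0$ at the same rate that $\rho = 1/r \to 0$; hence $\rho_B = \rho/\trho$ remains bounded away from $0$ in the limit. Thus the asymptotic point lies in the interior of $\tilde Z$, and the uniform bounds of \eqref{eq:time-and-rho} in a neighborhood of $\partial_+\SR$ apply, yielding the finite-time conclusion.
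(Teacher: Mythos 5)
Your argument is essentially correct for the trajectories that actually enter the interior $\{\rho > 0\}$ of $[\X;Z]$, and your explicit accounting of why infinite physical time compresses to finite $\Vec''$-time via the factor $\kappa\trho^{-1}$ is a useful elaboration of a point the paper leaves implicit. Your endpoint analysis is also right (modulo a small notational slip: $\mu$ is $\eta/r$ for the angular component $\eta$ of the momentum, not $\xi/r$, and the coefficient of $\partial_{\trho}$ in \eqref{eq:time-and-rho} is exactly $-2(\sgn\lambda)\sqrt{E}$ in the region $|\lambda| > (1-c)\sqrt{E}$, so no $O(\trho^{\alpha-1})$ error is needed there).

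However, there is a genuine gap: you have only proved the lemma for initial points in the \emph{interior} of $\partial_-\SR$. Recall that $\partial_-\SR$ is the radial compactification of $T^*\mathbb{S}^{d-1}$ --- a ball bundle --- and has a nonempty boundary $\partial_-\SR \cap B$ lying at the corner $\rho_B = 0$ of $[\X;Z]$, i.e.\ at ``$|\eta'| = \infty$''. Integral curves starting there never enter the interior of $[\X;Z]$, since $\Vec''$ is tangent to $B$; they are confined to $B$ for all time. For such curves there is no $\RR^d$ bicharacteristic $x(t) = -t\omega' + \eta' + o(1)$ to appeal to, so neither the nontrapping hypothesis nor the reparametrization-at-spatial-infinity argument applies. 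The paper's proof handles this by computing the flow explicitly inside $B$ (with $\trho = |\mu|$ as the boundary defining function of $\tilde Z$): the dynamics reduces to the geodesic flow on the cosphere bundle of $\mathbb{S}^{d-1}$, giving the exact solution $|\mu| = |\mu'| = \sqrt{E}\sin s$, $\lambda = -\sqrt{E}\cos s$, which reaches $\lambda = +\sqrt{E}$ at $s = \pi$. Continuity of the flow in initial conditions then takes care of trajectories starting near, but not on, the corner. You need to add this case (or some substitute) to close the argument.
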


 \begin{definition}\label{thm:SRplus}
 We define $\partial_+ \SR$ to be the intersection of $\SR$ with $\tilde Z \cap \{ \lambda = \sqrt{E} \}$. 
\end{definition}

\begin{remark} In \cite{HW2008} the sojourn relation was described as having conic singularities at the outgoing radial set $G^\sharp$, which were resolved by blowing up the span of this set. This blowup corresponds to the blowup of $Z$ already performed here. 
\end{remark}

\begin{proof}[Proof of Lemma~\ref{lem:uft}] 
 Notice that $\partial_- \SR$ is contained in the energy surface $\{ \lambda^2 + h^{ij} \mu_i \mu_j + V = E \}$. By conservation of energy, the integral curves 
starting from $\partial_- \SR$ are completely contained in this energy surface. 
Noting that $\mu = 0$ and $V = 0$ at $\tilde Z$, the integral curves
can only meet $\tilde Z$ at $\lambda = \pm \sqrt{E}$.

We first show that trajectories contained in the original
boundary hypersurface $B$ return to $\tilde Z \cap \{ \lambda = +\sqrt{E} \}$ in finite time. 
In this region, since $\rho / |\mu|$ and $|\mu'|/|\mu|$ are bounded,
we can take the boundary
defining function for $\tilde {Z}$  to be $\trho = |\mu|$.  Then consider the  vector field $\trho^{-1} \Vec'$, which is the same as $\Vec''$ up to reparametrization, and hence has the same integral curves. We compute that inside the boundary hypersurface $B$, the variables $\lambda$ and $|\mu|$ satisfy  
\begin{equation}\begin{gathered}
\dot \lambda = |\mu|, \quad \dot{| \mu|} = -\lambda, \quad \dot {|\mu'|} = - \lambda, \quad |\mu| = \sqrt{h^{ij} \mu_i \mu_j}. 
\end{gathered}\end{equation}
This has an exact solution $|\mu| = |\mu'| =  \sqrt{E}\sin s$, $\lambda = -
\sqrt{E}\cos s$ where $s \in [0, \pi]$ is the `time' parameter along this
reparametrized bicharacteristic and $\exp$ is the exponential map on
the sphere.\footnote{As this is happening, $y'$ traces out a geodesic on $\mathbb{S}^{d-1}$, of length $\pi$, i.e. half a great circle.} In particular, it returns to $\tilde
Z$ in finite time, at $\lambda = +\sqrt{E}$, as claimed. Then by continuity, nearby trajectories also reach $\tilde
Z$ in finite time. As observed above, this can only be at $\lambda = \pm\sqrt{E}$ and by continuity, it must be at $\lambda = +\sqrt{E}$. 

Now consider integral curves starting at $\partial_- \SR$ and in the interior of $\tilde Z$. These integral curves immediate pass into the interior of $[ \X; Z]$, i.e. into $\{ \rho > 0 \}$. By the nontrapping hypothesis, they return to $\{ \rho = 0 \}$, and this can only be at $\tilde Z$, as the vector field $\Vec''$ is tangent to $B$. Since $\Vec''$ is inward pointing at $\tilde Z$ for $\lambda <0$ and outward pointing for $\lambda > 0$, according to \eqref{eq:time-and-rho}, this must occur at $\lambda > 0$, hence at $\lambda = +\sqrt{E}$. 
 \end{proof}

Note that the interior of $\tilde Z \cap \{ \lambda = \sqrt{E} \}$ can be identified  with
 $T^*\mathbb{S}^{d-1} \times T^*\mathbb{S}^{d-1}$; indeed it as
 discussed above, $(y, \eta, y', \eta')$ give smooth functions on the
 interior of $\trho = 0$, and thus $\tilde Z \cap \{ \lambda =
 \sqrt{E} \}$ inherits a symplectic structure.  The interior of the set $\p_- \SR$ is
 thus identified with $T^* \mathbb{S}^{d-1}$ as the diagonal, and
 $\p_- \SR$ itself is in fact the ball bundle obtained by
 radially compactifying the fibers of $T^* \mathbb{S}^{d-1}$.  The boundary $\partial_+ \SR$ of $\SR$
 restricts to be a Lagrangian submanifold of this space,
 and coincides with the graph of the reduced scattering map
 $\mathcal{S}_E$ at energy $E$. Thus, the submanifold $\p_+ \SR$
 \textit{is precisely the
Lagrangian for the `absolute scattering matrix'.}
The relative
scattering matrix $S_h$, which is the object we are studying in this
paper, is the composition of the absolute scattering matrix with the
antipodal map multiplied by $i^{(d-1)/2}$; this normalization ensures
that the scattering matrix for the zero potential is the identity.

Note further that the integral curves of $\Vec''$ have initial
condition on the manifold with boundary $\p_- \SR$, that the boundary
of $\p_- \SR$ can be defined by the restriction of $\rho_B$ to $\p_-
\SR$ (as its boundary is exactly its intersection with $B$), and that
one expects integral curves $\gamma_p(\tau)$, where $p \in \p_- \SR$
is the initial value, to not be smooth in three places: 1) at $\tau =
0$, 2) at $p \in B$, and 3) when $\tau = T_p,$  the \textit{exit
  time}, i.e.\ the time when $\gamma_p$ intersects $\p_+ \SR$.  

We can now prove Proposition~\ref{thm:sojourn-near-infty}.
\begin{proof}[Proof of Proposition~\ref{thm:sojourn-near-infty}]
Let $T(y', \eta')$ be the time (in terms of the vector field $\Vec''$) taken to reach $\partial_+ \SR$ starting at $(y', \eta') \in \partial_- \SR$. Also, let $Y(y', \eta', \tau)$ and $N(y', \eta', \tau)$ be the solutions of the ODE \eqref{eq:time-and-rho} for $y$, respectively $\eta$. For large $|\eta|$ we use inverse polar coordinates $\hat \eta, |\eta|^{-1}$ and write $\hat N$ and $|N|^{-1}$ for the corresponding ODE solutions. 
 Thus,  the map $\mathcal{S}$ can be expressed in the form
\begin{equation}
\mathcal{S}(y', \eta') = \big( Y(y', \eta', T(y', \eta')), N (y', \eta', T(y', \eta')) \Big).
\label{SYN}\end{equation}

We first prove the following claim:
\begin{multline}
\text{For $|\eta'| \leq R < \infty$, $Y(y', \eta', T(y', \eta'))$ and $N (y', \eta', T(y', \eta'))$ are $C^\infty$ functions of $(y', \eta')$.} \\ \text{  For large $|\eta'|$, $(Y, \hat N)$ are $C^\infty + |\eta'|^{-\alpha} \Cinfe$ functions of $(y', \hat \eta', |\eta'|^{-1})$,} \\ \text{  while $|N|^{-1}$ is $|\eta|^{-1}$ times a $C^\infty + |\eta'|^{-\alpha} \Cinfe$ function of $(y', \hat \eta', |\eta'|^{-1})$.}
\end{multline}
To prove this for $|\eta'| \leq R$, we choose a small $\delta > 0$ and write $T'(y', \eta') = T(y', \eta') - \delta$. Because of the form of $\Vec''$ near $\partial_+ \SR$, this is the time taken for the trajectory starting at $(y', \eta') \in \partial_- \SR$ to reach the set $\{ \trho = \delta, \lambda > 0 \}$. As a consequence of Proposition~\ref{prop:conormal-regularity}, we see that $Y(y', \eta', T'(y', \eta'))$ and $N (y', \eta', T'(y', \eta'))$ are $C^\infty$ functions of $(y', \eta')$. (Unfortunately, we cannot immediately make the same claim with $T$ replacing $T'$, because $Y(y', \eta', \tau)$ fails to be smooth in $\tau$ precisely at $\tau = T$.) Now define the map 
$$
(y_0, \eta_0) \mapsto S_\delta(y_0, \eta_0),
$$
where $\gamma^{-1}_{y_0, \eta_0}$ is the trajectory that meets $\partial_+ \SR$ at $(y_0, \eta_0)$ and $S_\delta(y_0, \eta_0)$ are the $(y, \eta)$ coordinates of the intersection of  $\gamma^{-1}_{y_0, \eta_0}$ with $\{ \trho = \delta, \lambda > 0 \}$. Again using Proposition~\ref{prop:conormal-regularity}, we see that $S_\delta$ is a smooth map. Moreover, since $\Vec''$ is Lipschitz, the map $S_\delta$ is invertible for  $\delta$ sufficiently small. 
From these observations, and  \eqref{SYN},  we see that 
$$
\mathcal{S}(y', \eta') = S_\delta^{-1}\Big( Y(y', \eta', T'(y', \eta')), N (y', \eta', T'(y', \eta')) \Big) 
$$
 is smooth. 
 
 To prove the claim for $|\eta'|$ large, we follow exactly the same steps, replacing $C^\infty$ regularity by $C^\infty + |\eta'|^{-\alpha} \Cinfe$ regularity in terms of the boundary defining function $|\eta'|^{-1}$ for $\partial_- \SR$, making use of Lemma~\ref{lem:inverses}.

Now we consider the effect of the potential $V$ (compared to the zero
potential) on these functions. Let $A$ be the antipodal map on the sphere, and $A^*$ the induced map on its cotangent bundle.   In the case of zero potential, at
$\partial_+ \SR$, $A^*(y, \eta)$ is equal to $(y', \eta')$. Since the
potential $V$ has the effect of perturbing $\Vec''$ by an
$O(\rho_B^{\alpha})$ term, $\rho_B = |\eta'|^{-1}$, we see from Lemma~\ref{lem:comparison} that
$y$ is given by $A(y')$, where $A$ is the antipodal map, plus a
$|\eta'|^{-\alpha} \Cinfe$ function
of the initial values $(y', \hat{\eta'}, 1/|\eta'|)$. Similarly, after
applying $A^*$, $1/|\eta|$ is equal to $1/|\eta'|$ plus a 
$|\eta'|^{-\alpha-1} \Cinfe$ function of $(y', \hat{\eta'}, 1/|\eta'|)$. 
If we write these statements in terms of the
Euclidean variables $\eta$ and $\eta'$, they translate precisely into
\eqref{conreg}.

 The function $\varphi$ is discussed in
 \eqref{eq:varphi}--~\eqref{varphireg} below.
  
\end{proof}

\subsection{Semiclassical parametrix for the Poisson operator}\label{sec:parametrix}
Heuristically speaking, e.g.\ from
\eqref{eq:generalized-eigenfunction}, the Scattering matrix is the
limit of the \textit{incoming} Poisson operator to the sphere at infinity after suitably
rescaling, composing with the antipodal map, and localizing in
frequency so as to extract only the outgoing part.  We make a more precise statement now followed by a
characterization of the Schwartz kernels of both the Poisson operator
and scattering matrices.

 The Schwartz kernel of the scattering matrix, as a half-density, is
the distributional limit of
$$
A e^{ \frac 14 \pi i (d -1) }r^{-1/2} e^{-ir\sqrt{E}/h} M_{out} P_h(r,
\omega, \omega') \rvert_{r = \infty},
$$
where $P_h = P_h(E)$ is
the  incoming Poisson operator, $A$ is the antipodal map, and
$M_{out}$ is a cutoff to semiclassically outgoing frequencies. This requires some
explanation, which we give a rough version of now with details to follow.  
First of all, here we are regarding $P_h$ has a half-density, by multiplying by
$|dx d\omega'|^{1/2} = |r^{d-1} dr d\omega d\omega'|^{1/2}$ on
$\mathbb{R}^d_x \times \mathbb{S}_{\omega'}^{d-1}$. For example,
$P_{h, 0}(E)$, the  incoming Poisson operator for the zero potential is 
$$
P_{h, 0}(E) = (\sqrt{E}/2\pi h)^{(d-1)/2} e^{- ix \cdot \omega' \sqrt{E}/h} |dx d\omega'|^{1/2}.
$$
By \cite[Eqn 1.13]{GST}, for $\psi \in C^\infty(\mathbb{S}^{d-1})$,
letting $A$ denote the antipodal map of $\mathbb{S}^{d-1}$, we
have
$$
P_{h, 0}(E) (\psi |d\omega|^{1/2}) \sim r^{-(d-1)/2} ( e^{-i r
  \sqrt{E}/h} e^{\frac 14 \pi i (d -1)} \psi(\omega) + e^{i r
  \sqrt{E}/h} e^{- \frac 14 \pi i (d -1)} A^*\psi) |dx|^{1/2},
$$ 
and for general (decaying, smooth) potentials, $P_h(E) (\psi
|d\omega|^{1/2})$ satisfies the same expression with $A^* \psi$
replaced by $A^* S_h \psi$, where $S_h = S_h(E)$ is the scattering matrix.  Taking into account the half density
factor on $\mathbb{R}^d$, one then has
\begin{equation*}
  \begin{split}
    &r^{-1/2} e^{-ir \sqrt{E}/h} P_h(E) (\psi |d\omega|^{1/2}) \\
    &\qquad \qquad \sim ( e^{- 2 i r \sqrt{E}/h} e^{\frac 14 \pi i (d -1)}
    \psi(\omega) + e^{- \frac 14 \pi i (d -1)} A^* S_h(\psi)) |
    \frac{dr}{r} d\omega |^{1/2}.
  \end{split}
\end{equation*}
The half-density $|dr / r|^{1/2}$ is special; it is exactly the radial
half-density which makes sense to leading order invariantly at the
sphere at infinity of the radially compactified Euclidean space
$\overline{\mathbb{R}^d}$.  One thus wishes to cancel off the $|dr /
r|^{1/2}$ factor, to microlocalize away from the $e^{-2ir \sqrt{E}/h}$
frequency, and then take the limit $r \to \infty$.  Composing with $A e^{\frac 14
  \pi i (d -1)}$ will then give the scattering matrix.

In the case that $V$ is a smooth function viewed on $\overline{\RR^d}$, which requires in particular that $\alpha$ is an integer, the semiclassical Poisson operator was constructed in \cite{HW2008} as a sort of `boundary value' of the resolvent kernel. However, the Poisson operator can also be constructed directly. Here we make some remarks on this construction in the case that $V$ has regularity $\rho^\alpha \Cinfe$. 

We wish to construct a Fourier integral operator $F$ which is a parametrix for the Poisson operator. That is, it should have the property that $(h^2 \Delta + V - E) F_h(\phi) \in h^\infty \rho^\infty C^\infty(\overline{\RR^d})$, and also that 
$$
F_h(\phi) \sim 
 r^{-(d - 1)/2} (e^{- i \sqrt{E} r/h} \phi(\omega) + e^{i
  \sqrt{E} r/h}\psi(-\omega) ) + o(r^{(d - 1)/2}), \quad r = |x| \to \infty. 
$$
That is, up to $O(h^\infty \rho^\infty C^\infty)$ errors, $F_h \phi$ is a distorted plane wave for $\Delta + V $ of energy $E$, and has incoming boundary data $\phi$. 
Then we will have $e^{i\pi(d-1)/2}\psi = S_h(\phi)$ up to an $O(h^\infty C^\infty)$
error. 

Based on \cite{HW2008}, our ansatz is that $F_h$ is a Fourier integral operator associated to the Lagrangian submanifold $\SR$. The principal symbol $\sigma_0$ should satisfy the transport equation
$$
\mathcal{L}_{\Vec} a_0 = 0,
$$
where $a_0$ is a half-density on $\SR$. Moreover, at $\partial_- \SR$, we have an initial condition for $a_0$. This arises from the microlocally incoming condition on the plane wave $F_h \phi$, that is, the condition that the incoming boundary data be $\phi$. Microlocally this translates to the condition that $\rho^{1/2} a_0$ restricts to $\partial_- \SR$ to be the canonical half-density $|dy' d\eta'|^{1/2}$ there. It is not hard to see that this implies that the half-density $a_0$ is equal to $|dy' d\eta' dt|^{1/2}$, where $t$ is the time parameter along the $\Vec$-trajectories.  

Now consider the higher order symbols in any local parametrization of the FIO. A local parametrization
takes the form
\begin{equation}
h^{-\dim v/2} \int e^{i\Psi(y, y', \rho, v)/h} \sum_{j=0}^\infty h^j
a_j(y, y', \rho, v) \, dv \times \big| \frac{dy dy' d\rho}{\rho^{d+1}}
\big|^{1/2}. \label{eq:6}
\end{equation}
Without loss of generality, we can assume that $a_j$ depends on a minimal number of variables, 
that is, $\dim \SR = 2d-1$ of the variables $(y, y', \rho, v)$. We
denote these variables collectively by $\blambda$. In that case, the symbols $\sigma_j$, $j \geq 0$, given by
$$
\sigma_j = a_j(\blambda) \Big| \frac{\partial(\blambda, d_v \Psi)}{\partial (y, y', \rho, v)} \Big|^{-1/2} |d\blambda|^{1/2},
$$
are formally determined by $\sigma_0$ and are solutions of an equation of the form
$$
\mathcal{L}_{\Vec} \sigma_j = Q \sigma_{j-1},
$$
where $Q$ is a second order operator on half-densities on $\SR$, depending on the particular variables on which $a_j$ depends. The operator $Q$ is induced by the Laplacian. Because of this it is $\rho^2$ times a b-differential operator of order 2. The regularity of the coefficients of $Q$ on $\SR$ is determined by the regularity of $\SR$, i.e. they take the form $C^\infty + \trho^{\alpha-1} \rho_B^\alpha C^\infty$. 

We can write this transport equation on $\SR$ in the coordinates $(y', \eta', t)$. Writing $\sigma_j = s_j |dy'd\eta' dt|^{1/2}$, it implies the ODE 
$$
\frac{\partial}{\partial t} s_j + k s_j = \rho^2 \tilde Q s_{j-1},
$$
where $\tilde Q$ is a scalar second order b-differential operator and, near $\partial_{\pm} \SR$, we have $k = 2\rho^2 \partial_\rho \Lambda(y', \eta', \rho)$. Now changing variable from $t$ to $\trho$, and dividing by a factor of $\rho$, we have near $\partial_{\pm} \SR$, 
\begin{equation}
2 \Lambda \trho \partial_{\trho} s_j = \big( 2 \rho \partial_\rho \Lambda(y', \eta', \rho) \big) s_j + \rho \tilde Q s_{j-1}.
\label{transeqn}\end{equation}
Moreover, we have an initial condition $s_j = 0$ at $\partial_- \SR$. 
Propositions~\ref{prop:conormal-regularity} and \ref{prop:cr2} apply
to this ODE and show that $s_j$ has the regularity $C^\infty +
\trho^{\alpha-1} \rho_B^\alpha C^\infty$ away from $\partial_+ \SR$,
and has the regularity given by part (iii) of
Proposition~\ref{prop:cr2} near $\partial_+ \SR$. In fact we can
conclude more regarding the vanishing of the $s_j$ using the ODE
comparison lemma, Lemma \ref{lem:comparison} above.  Indeed, as the
coordinates $(y', \eta', t)$ provide coordinates on both $\SR$ and
$\SR^0$, the free scattering relation, we can compare the $s_j$ with
the $s_j^0$, the functions arising analogously in the free case, which
satisfy that $s_j^0 \equiv 0$ for $j \ge 1$ and $s^0_0 \equiv 1$.  It is
straightforward to check that the ODEs for the $s_j^0$ differ in an
$\trho^{\alpha - 1}\rho_B^\alpha  C^{\infty, \epsilon}$ manner, and
thus the lemma implies that the $s_j$ for $j \ge 1 $ and $1 - s_0$ lie
in $\trho^{\alpha-1} \rho_B^\alpha C^\infty$.\footnote{Further analysis shows that in fact $s_j$ vanishes to
  order $\alpha + j$ for $j \geq 1$, but we do not need this fact
  here.}

Using these symbols we can build an FIO parametrix for the Poisson
kernel, that solves the equation $(\Delta + V - E) F_h \phi = 0$ up to
an error in $h^\infty e^{i\sqrt{E} r/h} r^{-(d+1)/2}
C^\infty(\overline{\RR^d})$; an additional step reduces the error to
$h^\infty \rho^\infty C^\infty(\overline{\RR^d})$\footnote{This is
  just as in the parametrix construction of Melrose-Zworski}. The
error term can be solved away by applying the outgoing resolvent, and
this contributes a correction term to the parametrix of the form
$h^\infty e^{ir/h} r^{-(d-1)/2} C^\infty(\omega, \omega')$. This follows from
\cite[Section 12]{M1994} (showing smoothness in the $y, y'$ variables)
and \cite{VZ2000} (showing that the correction is $O(h^\infty)$). 

Therefore
this contributes a smooth term that is $O(h^\infty)$ to the scattering
matrix, and this has no effect on the conclusion of
Lemma~\ref{lem:S-structure}. So, in the proof in the next subsection,
it suffices to analyze the parametrix for the Poisson operator.

\subsection{The scattering matrix: proof of Lemma~\ref{lem:S-structure}}\label{sec:scattering-matrix-deduction}
The scattering matrix is obtained by taking a distributional limit of
the Poisson kernel as $r \to \infty$. We can break up the Poisson
operator microlocally into a piece microsupported away from
$\partial_+ \SR$, and a piece microsupported away from $\partial_-
\SR$. For the first piece, localized away from $\partial_+ \SR$, if we
multiply the kernel by $\rho^{1/2} e^{i\sqrt{E}/(h\rho)}$ and then take
the canonical restriction to $\rho = 0$, we obtain the identity
operator; that is, if we let this piece of $P_h(E)$ operate on a
smooth function $f(\omega')$, then multiply the result by $\rho^{1/2}
e^{i\sqrt{E}/(h\rho)}$ and then take the canonical restriction to $\rho =
0$, we obtain $f$. 
For the second piece, localized away from $\partial_- \SR$, if we
multiply the kernel by $\rho^{1/2} e^{-i\sqrt{E}/(h\rho)}$ and then take
the canonical restriction to $\rho = 0$, we obtain the scattering
matrix.

We now see how this happens at the level of kernels. We need to analyze a microlocal representation of $F_h$  more carefully near the corner of $\SR$ at the intersection of $\partial_+ \SR$ and $\rho_B = 0$. 
In this region, the Poisson operator can be expressed as an oscillatory integral using the results of \cite{HW2008}.  After a rotation of coordinates, we can assume that  $\mu_1$ and $1/\eta_1$ furnish local boundary defining functions $\trho$ and $\rho_B$, respectively.  We can then use coordinates 
\begin{equation}
\mathcal{Z} = (y', \mu_1, 1/\eta_1, \eta_j/\eta_1), \quad 2 \leq j \leq d-1, 
\end{equation}
 on $\SR$ near a point at $B \cap \tilde Z$, that is at $\rho_B = \trho = 0$. In terms of these we can write the other coordinates as smooth functions:
\begin{equation}
y_j = Y_j(\Z), \quad \Phi = \Phi(\Z). 
\end{equation}
This $\Phi$ is the same as that in the previous paragraph thought of
as a function on $\SR$.
In the expressions below we will replace $\mu_1$ by $\sigma$ and $\eta_j/\eta_1$ by $v = (v_2, \dots, v_{d-1})$. 
Then, according to \cite[Section 6.3]{HW2008}, $\SR$ has a local parametrization of the form 
\begin{equation}
\Psi(r, y, y', \sigma, v) = \Phi(y', \sigma, \frac{\rho}{\sigma}, v) + \frac{\sigma}{\rho} \big(y_1 - Y_1(y', \sigma, \frac{\rho}{\sigma}, v)\big) + \sum_{j=2}^{d-1}  \frac{\sigma}{\rho} v_j \big(y_j - Y_j(y', \sigma, \frac{\rho}{\sigma}, v)\big),
\label{Poisson-param}\end{equation}
Then a microlocal parametrix for the  
Poisson operator $P_h(E)$ takes the form
  \begin{equation}
    (\sqrt{E}/(2\pi h))^{-(d-1)/2} \int e^{i\Psi/h}  \rho^{-(d-1)/2} \sigma^{d-2} a(\sigma, \frac{\rho}{\sigma}, y', v, h) \, d\sigma \, dv |\frac{d\rho dy}{\rho^{d+1}} dy'|^{1/2},
    \label{Poisson-local}
  \end{equation}
  where $\Psi$ is as in \eqref{Poisson-param}\footnote{The powers of
    $\rho$ and $\sigma$ are as given by \cite[Equation
    (6.21)]{HW2008}, after taking into account that the half-density
    used there is $\rho^{-(d-1)/2} h^{-(2d-1)/2}$ times the
    half-density $|dx d\omega'|^{1/2}$ used here, times
    $|dh/h^2|^{1/2}$.}
 and the amplitude $a$ has regularity as determined by the regularity of the functions $s_j$ as described above. 

If we multiply  \eqref{Poisson-local} by $\rho^{1/2} e^{-i\sqrt{E}/(\rho
  h)}$ and restrict to $\rho = 0$, using $d\sigma dv =
\rho^{d-1}\sigma^{-(d-2)} d\eta$ we obtain the following oscillatory
integral expression, with the given subsitution we obtain
\begin{equation}
  \label{eq:Smatrix-f}
  \begin{split}
&   \int e^{i\Psi/h} \rho^{-(d-1)/2}
    \sigma^{d-2} a(\sigma, y', \frac{\rho}{\sigma},  v, h) \, d\sigma
    \, dv |\frac{d\rho dy}{\rho^{d+1}} dy'|^{1/2} \\
&\qquad =  (\int e^{i\Psi/h} \rho^{(d-1)/2} a(\sigma, y', \eta, h) \, d\eta)
|\frac{d\rho dy}{\rho^{d + 1}}  dy'|^{1/2}.
  \end{split}
\end{equation}
Regarding $\Phi$ as a function on $\SR$, its behaviour as we approach
$\partial_+ \SR$ was worked out in \cite[Section
2]{Gell-Redman-Hassell-Zelditch} (where the function is called
$\phi_0$). It is shown that 
$$
\Phi = \sqrt{E} r + \varphi + o(1), \trho \to 0,
$$
where $\varphi$ is a function on $\SR$ constant on trajectories, given by 
\begin{equation}
  \label{eq:varphi}
  \varphi(y', \hat \eta', \rho_B)  = \int_{\gamma} x \cdot \nabla V,
  \  \mbox{ for $\gamma$ the trajectory with initial
    condition $(y', \hat \eta', \rho_B)$}.
\end{equation}
Thus $\varphi$ is the boundary value of a function $\tilde{\varphi}$ on $\SR$ satisfying the ODE
$
\Vec(\tilde\varphi) = x \cdot \nabla V 
$
with a zero initial condition at $\partial_+ \SR$. Using Propositions~\ref{prop:conormal-regularity}, \ref{prop:cr2} and Corollary~\ref{cor:solat0}, we see that 
\begin{equation}
\varphi \in \rho_B^{\alpha-1} \Cinfe(\partial_+ \SR).
\label{varphireg}\end{equation} 
(This is the final part of the proof of Lemma \ref{thm:sojourn-near-infty}.)

Multiplying by $e^{ \frac 14 \pi i (d -1) }\rho^{1/2}
e^{-i\sqrt{E}/(\rho h)}$
and taking the distributional limit at $\rho = 0$ then gives
\begin{equation}
(\sqrt{E}/(2\pi h))^{-(d-1)/2} \int e^{i \big( \varphi(y', \eta) + \sum_j(y_j - Y_j(y', \eta)) \eta_j  \big)     /h}  a(0, y', \eta, h) \, d\eta |dy dy'|^{1/2}.
\label{Smatrix}
\end{equation}
We write the phase in this equation as $(y-A(y')) \cdot \eta + G(y', \eta)$. 
From \eqref{varphireg}, as well as $Y = A(y') + O(\rho^\alpha \Cinfe)$ from \eqref{conreg}, we see that 
$G \in \rho^{\alpha-1} \Cinfe$.

Then, from the fact that the principal symbol of the scattering matrix
is $|dy d\eta|^{1/2}$ (see \cite[Lemma 3.1]{DGHH2013}), we find that
$$
a(0, y', \eta, 0) = \Big| \det \frac{\partial(y, \eta, y-A(y') + d_\eta G)}{\partial(y', y, \eta)} \Big|^{1/2} = \big| \det (\Id + d^2_{y' \eta} G) \big|^{1/2} .
$$
This shows that 
$$
a(0, y', \eta, 0) = 1 + O(\rho^\alpha \Cinfe).
$$
Moreover, it follows from the construction of the functions $a_j$ in
\eqref{eq:6} that each term in the expansion of $a$ in $h$ as $h \to
0$ is bounded by $O(\rho^\alpha \Cinfe)$ and also the error
contributes to lower order that $a - 1 \in S^{-\alpha}$.  Composing with
the antipodal map gives \eqref{eq:Sh-at-fiber-infinity} and thus Lemma \ref{lem:S-structure} follows.

\section{Powers of the scattering matrix}\label{sec:composition}

In this section we will prove Lemma \ref{thm:composition}, which gives
an expression for powers of the scattering matrix $S_h^k$ near `fiber
infinity'.  Indeed, recall that, notation as in Lemma \ref{thm:composition},
$S_h = F_1 + F_2$ where $F_1$ is a semiclassical FIO with compact
microsupport and $F_2$ (or rather its Schwartz kernel) is given by the
oscillatory kernel expression in \eqref{eq:Sh-at-fiber-infinity}, and as
discussed in the proof of Lemma
\ref{thm:trace-formula}, the only non-compactly microlocally supported
term in $S_h^k$ is $F_2^k$.

Recall the discussion in Section \ref{sec:traces-and-compositions}
(see near \eqref{eq:half-densities})
explaining that the oscillatory integral giving $F_2$ is to be thought
of as a half-density on $\mathbb{S}^{d-1} \times \mathbb{S}^{d-1}$.
Thus two oscillatory integrals
\begin{equation}\label{eq:oscillatory-integral}
I_i(y, y') = (2 \pi h)^{-(d -1)}\int e^{\Phi_i(\sphvar, \sphvar', \dspharv)/h} a_i(\sphvar, \sphvar',
\dspharv, h) \, d\dspharv
\end{equation}
with $i = 1, 2$ define Schwartz kernels $I_i(y, y') |dy dy'|^{1/2}$
whose composition as operators is given by
$$
I_1 \circ I_2 = (\int I_1(y, y'') I_2(y'', y') |dy''|) |dy dy'|^{1/2}.
$$
Since this just amounts to integration in the $y''$ variable in all
the expressions below we will drop the half density factors.

\begin{proof}[Proof of Lemma \ref{thm:composition}]
Suppose that $G_1(\sphvar, \dspharv)$ and $G_2(\sphvar, \dspharv)$ are symbols
of order $-\beta$ for $\beta > 0$. We will show that the
composition of two FIOs, with phase function 
\begin{equation}\label{eq:Phi-appendix}
\Phi_i(\sphvar, \sphvar', \dspharv)  := (\sphvar - \sphvar') \cdot
\dspharv + G_i(\sphvar', \dspharv)
\end{equation}
is an FIO with phase function $(\sphvar -
\sphvar') \cdot \dspharv + G_1(\sphvar', \dspharv) + G_2(\sphvar', \dspharv) +
E(\sphvar', \dspharv)$, where $E$ is a symbol of order $-2\beta$.
Thus let $I_1, I_2$ be oscillatory integrals as in
\eqref{eq:oscillatory-integral}, with $\Phi_i$ as above and 
with amplitudes $a_i \in S^{m_i}$.  
The composition has a representation of the form 
\begin{equation}\label{eq:initial-composition}
I_1 \circ I_2 : = (2\pi h)^{-2(d -1)} \int e^{\frac{i}{h}\Phi(\sphvar, \sphvar'', \sphvar', \dspharv, \dspharv')} a_1(\sphvar, \sphvar'', \dspharv, h)  a_2(\sphvar'', \sphvar', \dspharv', h) \, d\dspharv  \, d\dspharv' \, d\sphvar'',
\end{equation}
where
$$
\Phi(\sphvar, \sphvar'', \sphvar', \dspharv, \dspharv') = (\sphvar - \sphvar'') \cdot \dspharv + G_1(\sphvar'', \dspharv) +(\sphvar'' - \sphvar') \cdot \dspharv' + G_2(\sphvar'', \dspharv').  
$$
We eliminate the variables $(\sphvar'', \dspharv')$ up to an $O(h^\infty)$
error, by replacing them with their stationary values and applying the
stationary phase lemma \cite{Hvol1}. This works as the Hessian of $\Phi$ with respect to $(\sphvar'', \dspharv')$ is 
$$
\begin{pmatrix}
0 & \Id \\
\Id & 0 
\end{pmatrix}
+ O(|\dspharv'|^{-1-\alpha}),
$$
and is therefore invertible, with uniformly bounded inverse, for large $|\dspharv'|$. 
 The stationary points in $y'', \dspharv'$, i.e.\ the points where $D_{\sphvar'', \dspharv'}
 \Phi = 0$, occur at
 \begin{equation}\begin{aligned}
 \sphvar'' &= \sphvar' - d_{\dspharv'} H(y'', \eta', \eta), \\
 \dspharv' &= \dspharv + d_{\sphvar''} H(y'', \eta', \eta),
 \end{aligned}\end{equation}
where
$$
H(y'', \eta', \eta) = G_1(y'', \eta) + G_2(y'', \eta').
$$
The second line in the above equation array shows that on the critial
set we can write $\eta = \eta(\eta', y'')$ with $\eta - \eta' \in S^{1
  - \beta}$.  Thus, we want to invert the transformation 
 \begin{equation}
 \begin{pmatrix} \sphvar' \\ \dspharv \end{pmatrix} 
 =  \begin{pmatrix} \sphvar'' \\ \dspharv' \end{pmatrix} + 
 \begin{pmatrix} - d_{\dspharv'} H(y'', \eta') \\ d_{\sphvar''} H(y'', \eta') \end{pmatrix}
 \label{invert-trans}\end{equation}
 when $|\dspharv|$ is large. It is easy to see, by the method of successive approximations for example, that the inverse exists for large $\dspharv$. We claim that the inverse map,
 which we write in the form $\sphvar''(\sphvar', \dspharv), \dspharv'(\sphvar', \dspharv)$,  is the identity plus a symbol of order $-\beta$. To see this, we differentiate \eqref{invert-trans} with respect to $\sphvar'$ and $\dspharv$ to obtain 
 \begin{equation}
 \begin{pmatrix} \Id & 0  \\ 0 & \Id  \end{pmatrix}
= 
 \Bigg( 
 \begin{pmatrix} \Id & 0  \\ 0 & \Id  \end{pmatrix} + 
 \begin{pmatrix}
 - d^2_{\dspharv'\sphvar''} H(\sphvar'', \dspharv') & d^2_{\dspharv'\dspharv'} H(\sphvar'', \dspharv') \\ d^2_{\sphvar''\sphvar''} H(\sphvar'', \dspharv') & 
d^2_{\sphvar''\dspharv'} H(\sphvar'', \dspharv')
\end{pmatrix} \Bigg) 
\begin{pmatrix} \frac{\partial \sphvar''}{\partial \sphvar'} & \frac{\partial \sphvar''}{\partial \dspharv} \\ 
\frac{\partial \dspharv'}{\partial \sphvar'} & \frac{\partial \dspharv'}{\partial \dspharv} \end{pmatrix} .
\label{matrix-relation}\end{equation}
This shows that 
$$
\begin{pmatrix} \frac{\partial \sphvar''}{\partial \sphvar'} & \frac{\partial \sphvar''}{\partial \dspharv} \\ 
\frac{\partial \dspharv'}{\partial \sphvar'} & \frac{\partial \dspharv'}{\partial \dspharv} \end{pmatrix} = 
 \begin{pmatrix} \Id & 0  \\ 0 & \Id  \end{pmatrix} + 
 \begin{pmatrix}
 e_{11}(\sphvar', \dspharv) & e_{12}(\sphvar', \dspharv) \\
  e_{21}(\sphvar', \dspharv) & e_{22}(\sphvar', \dspharv)
  \end{pmatrix}
  $$
where repeated differentiation of \eqref{matrix-relation} shows that
$$
 \begin{pmatrix}
 e_{11}(\sphvar', \dspharv) & e_{12}(\sphvar', \dspharv) \\
  e_{21}(\sphvar', \dspharv) & e_{22}(\sphvar', \dspharv)
  \end{pmatrix} \in  \begin{pmatrix}
S^{-1 -\beta} & S^{-2 -\beta} \\
 S^{ -\beta} & S^{-1 -\beta}
  \end{pmatrix}.
$$
This proves that we can write
 \begin{equation}
 \begin{pmatrix} \sphvar'' \\ \dspharv' \end{pmatrix}
 =   \begin{pmatrix} \sphvar' \\ \dspharv \end{pmatrix}  + 
 \begin{pmatrix} f_1(\sphvar', \dspharv) \\ f_2(\sphvar', \dspharv) \end{pmatrix}, \quad
 f_1 \in S^{-1 -\beta}, f_2 \in S^{-\beta}.
 \label{inverse-trans}\end{equation}

 We now write the function $\wt{\Phi}$, the restriction of $\Phi$ to
 the critical set $\{ D_{\dspharv'} \Phi = 0 \}$,
 \begin{equation}\begin{gathered}
\wt{\Phi} = (\sphvar - \sphvar') \cdot \dspharv  + G_1(\sphvar', \dspharv) + G_2(\sphvar', \dspharv) + E'(\sphvar', \dspharv), \\
E'(\sphvar, \sphvar', \dspharv) =   (\sphvar' - \sphvar'')(\dspharv -
\dspharv') + G_2(\sphvar'', \dspharv') - G_2(\sphvar', \dspharv) +
G_1(y'', \eta) - G_1(y', \eta) 
 \end{gathered}\label{E}\end{equation}
Then writing $G_2(\sphvar'', \dspharv') - G_2(\sphvar', \dspharv)  =
G_2(\sphvar'', \dspharv') - G_2(\sphvar'', \dspharv) + G_2(\sphvar',
\dspharv) - G_2(\sphvar', \dspharv) $ as
$$
(\sphvar'' - \sphvar') \cdot \int_0^1 (d_{\sphvar}G_2)(\sphvar'' + t(\sphvar' - \sphvar''), \dspharv') \, dt \\ + (\dspharv' - \dspharv) \cdot \int_0^1 (d_{\dspharv} G_2)(\sphvar', \dspharv + t(\dspharv' - \dspharv)) \, dt,
$$
and 
$$
G_1(y'', \eta) - G_1(y', \eta)  = (\sphvar'' - \sphvar') \cdot \int_0^1 (d_{\sphvar}G_2)(\sphvar'' + t(\sphvar' - \sphvar''), \dspharv') \, dt, $$
we see from  \eqref{inverse-trans} that $E'$ is a symbol of order
$-2\beta$, so by stationary phase applied to~\eqref{eq:initial-composition},
$$
I_1 \circ I_2 = (2 \pi h)^{-(d -1)} \int e^{i \wt{\Phi}(y, y',
  \eta)/h} b(y, y', \eta) \, d\dspharv,
$$
where $b(y, y', \eta) = a_1(y, y'', \eta) a_2(y'', y', \eta')$
restricted to the the $\sphvar'', \dspharv'$ critical
set of $\Phi$, and, as is standard, $b \in S^{m_1 + m_2}$  with
principal symbol given by the product of the principal symbols of
$a_1$ and $a_2$. 

Lemma \ref{thm:composition} now follows by applying the above
results to repeated compositions of $F_2$ (for $k \ge 1$) or $F_2^*$
(for $k \leq 1$).  Indeed, note that the lemma is already
proven for $k = 1$ by \eqref{eq:Sh-at-fiber-infinity} and for $k = -1$
by \eqref{eq:Sh-at-fiber-infinity-conjugate}.  We focus on the $k \ge
1$ case as the $k \leq 1$ case is completely analogous.  Assuming by
induction that Lemma \ref{thm:composition} folds for $F_2^k$, consider
$F_2^{k + 1} = F_2^k \circ F_2$.  Thus the Schwartz kernels of these operators are
given by oscillatory intergrals $I_1, I_2$ corresponding $F_2^k$ and
$F_2$, respectively, as in~\eqref{eq:oscillatory-integral}
with $\Phi_1$ and $G_1$ corresponding to $F_2^k$ and $\Phi_2, G_2$  corresponding
to $F_2$. Thus $G_1(y',
\eta) = kG(y', \eta) + E_k(y', \eta)$ and $G_2 = G(y, \eta) + E'(y,
\eta)$, where $G$ comes from
the original phase function of $S_h$, i.e.\ it is as in
\eqref{eq:Sh-at-fiber-infinity} and $E_k, E' \in S^{1 - \alpha -
  \epsilon}$ for some $\epsilon > 0$.  Here as in the arguments
above we have used Remark \ref{thm:switch-y-yprime} to switch the
roles of $y'$ and $y$ in the perturbation term of the phase function.
Thus the above arguments imply that the composition has phase function
$\Phi = (k + 1)G(y', \eta) + E_k(y', \eta) + E'(y', \eta) + E''(y',
\eta) $ where $E''$ is a symbol of order $2(1 - \alpha)$.
Also, the amplitudes satisfy $a_1 - 1 \in S^{1 - \alpha}$ and $a_2 - 1
\in S^{1 - \alpha}$, then $b - 1 \in S^{1 - \alpha}$ as well.  Indeed,
this follows immediately from writing $y'', \eta'$ in terms of $y', \eta$ using
\eqref{inverse-trans}.\end{proof}

\end{appendix}

\end{document}